\numberwithin{equation}{section}
\theoremstyle{plain}
\newtheorem{thm}{Theorem} 
\newtheorem{cor}[thm]{Corollary} 
\newtheorem{theorem}[equation]{Theorem} 
\newtheorem{corollary}[equation]{Corollary} 
\newtheorem{lemma}[equation]{Lemma}
\newtheorem{proposition}[equation]{Proposition}
\theoremstyle{definition}
\newtheorem{example}[equation]{Example}
\newtheorem{question}[equation]{Question}
\newtheorem{remark}[equation]{Remark}
\DeclareMathOperator\Aut{Aut}
\DeclareMathOperator\diag{diag}
\DeclareMathOperator\End{End}
\DeclareMathOperator\gr{gr}
\DeclareMathOperator\GKdim{GKdim}
\DeclareMathOperator\gldim{gl. dim}
\DeclareMathOperator\hdet{hdet}
\DeclareMathOperator\id{id}
\DeclareMathOperator\Tr{Tr}
\DeclareMathOperator\VdM{Vdm}
\DeclareMathOperator\CM{CM}
\newcommand\NN{\mathbb N}
\newcommand\PP{\mathbb P}
\newcommand\ZZ{\mathbb Z}
\newcommand\cS{\mathcal S}
\newcommand{\Ext}{\ensuremath{\operatorname{\underline{Ext}}}}
\newcommand{\tails}{\ensuremath{\operatorname{tails}}}
\newcommand{\tors}{\ensuremath{\operatorname{tors}}}
\newcommand{\grmod}{\ensuremath{\operatorname{grmod}}}
\renewcommand{\int}{\smallint}
\newcommand\inv{^{-1}}
\newcommand\iso{\cong}
\newcommand\kk{\Bbbk}
\newcommand\tensor{\otimes}
\newcommand{\p}{{\sf{p}}}
\renewcommand{\r}{{\sf{r}}}
\newcommand{\GL}{\ensuremath{\operatorname{GL}}}
\renewcommand{\to}{\ensuremath{\longrightarrow}}
\newcommand\cF{\mathcal F}
\newcommand\hf{\hat{f}}
\newcommand\grp[1]{\left\langle #1 \right\rangle}
\begin{document}

\title[Auslander's Theorem for noncommutative algebras]{Auslander's Theorem for permutation actions on noncommutative algebras}

\author[Gaddis]{Jason Gaddis}
\address{Miami University, Department of Mathematics, 301 S. Patterson Ave., Oxford, Ohio 45056} 
\email{gaddisj@miamioh.edu}


\author[Kirkman]{Ellen Kirkman}
\address{Wake Forest University, Department of Mathematics and Statistics, P. O. Box 7388, Winston-Salem, North Carolina 27109} 
\email{kirkman@wfu.edu}

\author[Moore]{W. Frank Moore}
\address{Wake Forest University, Department of Mathematics and Statistics, P. O. Box 7388, Winston-Salem, North Carolina 27109}
\email{moorewf@wfu.edu}

\author[Won]{Robert Won}
\address{University of Washington, Department of Mathematics, Box 354350, Seattle, Washington 98195}
\email{wonrj@wfu.edu}

\subjclass[2010]{Primary: 16E65, 16W22}
\keywords{}

\commby{Jerzy Weyman}

\begin{abstract}
When $A = \kk[x_1, \ldots, x_n]$ and $G$ is a small subgroup of $\GL_n(\kk)$,
Auslander's Theorem says that the skew group algebra $A \# G$ is isomorphic to
$\End_{A^G}(A)$ as graded algebras.
We prove a generalization of Auslander's Theorem for permutation actions on 
$(-1)$-skew polynomial rings, $(-1)$-quantum Weyl algebras, 
three-dimensional Sklyanin algebras, and a certain homogeneous down-up algebra. 
We also show that certain fixed rings $A^G$ are graded isolated singularities
in the sense of Ueyama.
\end{abstract}

\maketitle
\section{Introduction}

Throughout, $\kk$ is 
an algebraically closed field of characteristic zero. All algebras are $\kk$-algebras and
all tensor products are over $\kk$.
For an algebra $A$ and $G$ a group of algebra 
automorphisms of $A$,
the set of all elements of $A$ that are invariant under the 
action of $G$ (denoted by $A^G$) forms a subalgebra of $A$,  
called the ring of invariants of $G$ on $A$.  Many notions in 
commutative algebra have their roots in the study of properties of 
rings of invariants of finite group actions on 
$\kk[x_1,\dots,x_n]$.  

The homological properties of $A^G$ are often of particular interest.  A
classical result of Shephard-Todd and Chevalley \cite{Chev,SheTod} states that
if $A = \kk[x_1,\dots,x_n]$ and $G$ is a finite subgroup of $\GL_n(\kk)$
acting as graded automorphisms, then $A^G$ has finite global dimension if and
only if $G$ is generated by reflections; an element $g \in \GL_n(\kk)$ is said to be a 
{\sf reflection} if $g$ fixes a codimension-one subspace of
$V= \bigoplus_{i=1}^n \kk x_i$.  

When $A$ is not the commutative polynomial ring the above definition of
reflection is not appropriate, but there is a suitable
generalization due to Kuzmanovich, Zhang and the second author \cite[Definition 2.2]{KKZ1}.
For a graded algebra $A$ of Gelfand-Kirillov (GK) dimension $n$, a graded automorphism $g$ is a reflection if its trace series is of the form
\[ \Tr_A(g,t) := \sum_{n \geq 0} \Tr(g|_{A_n})t^n = 
\frac{1}{(1-t)^{n-1}q(t)} \text{ for } q(1) \neq 0,\]
where $A_n$ denotes the $n^\text{th}$ graded component of $A$ and $\Tr(g|_{A_n})$ denotes the usual
trace of the action of $g$ on $A_n$.
When $A = \kk[x_1,\dots,x_n]$, $g$ is a reflection in 
this sense if and only if it is a reflection in the classical 
sense. In \cite[Theorem 5.5]{KKZ4}, the same authors prove that if
$A = \kk_{p_{ij}}[x_1,\dots,x_n]$ (a skew polynomial ring) and $G$ is a
finite group of graded automorphisms of $A$, then $A^G$ has
finite global dimension if and only if $G$ is generated by
reflections in this more general sense. 

Another classical theorem from invariant theory in which
reflections play an important role is a theorem of Auslander.  The {\sf skew group algebra} $A\# G$ is $A \tensor \kk G$ as a 
vector space over $\kk$, with multiplication defined by
$(a \tensor g)(b \tensor h) = a (g.b) \tensor gh$ for all 
$a,b \in A$, $g,h \in G$.
A natural algebra homomorphism (due to Auslander) from
$A \# G$ to $\End_{A^G}(A)$ (where we view $A$ as a \emph{right}
$A^G$-module) is given by
\begin{eqnarray*}
\gamma_{A,G} : A \# G & \to & \End_{A^G}(A) \\
       a \# g & \mapsto & 
       \left(\begin{matrix}A & \to & A \\ b & \mapsto & ag(b)\end{matrix}\right).
\end{eqnarray*}

\begin{thm} {\rm (Auslander \cite{A})}
If $A = \kk[x_1,\dots,x_n]$, $|G|$ is invertible in $\kk$, and $G$ does
not contain any nontrivial reflections (i.e., $G$ is a ``small group"), then
$\gamma_{A,G}$ is an isomorphism.
\end{thm}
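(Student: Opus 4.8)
The plan is to reduce the statement to a computation in codimension one. Set $R = A^G$. Since $\operatorname{char}\kk = 0$, the Reynolds operator $\frac{1}{|G|}\sum_{g \in G} g$ splits the inclusion $R \hookrightarrow A$, so $A$ is a module-finite $R$-algebra, $R$ is a Noetherian normal Cohen--Macaulay domain, and $A$ is a maximal Cohen--Macaulay, hence reflexive, $R$-module. As $A$ is commutative and $G$ fixes $R$ pointwise, $R$ is central in $A\#G$; moreover $A\#G$ is free of rank $|G|$ as a left $A$-module, hence a direct sum of $|G|$ copies of the reflexive $R$-module $A$, so it is reflexive over $R$, and $\End_R(A)$ is reflexive over $R$ as well. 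A homomorphism of reflexive modules over a Noetherian normal domain is an isomorphism as soon as it becomes an isomorphism after localizing at every height-one prime; hence it suffices to show $(\gamma_{A,G})_{\mathfrak p}$ is an isomorphism for every $\mathfrak p \in \operatorname{Spec} R$ of height one. (Injectivity of $\gamma_{A,G}$ is in any case immediate from Dedekind's lemma on independence of characters: if $\sum_g a_g \# g$ lies in the kernel then $\sum_g a_g\, g(b) = 0$ for all $b \in A$, and the distinct automorphisms $g$ are $\operatorname{Frac}(A)$-linearly independent, forcing every $a_g = 0$.)

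First I would pin down where the $G$-action fails to be free. For $g \neq 1$ let $\mathfrak a_g \subseteq A$ be the ideal generated by $\{g(x_i) - x_i\}$; its zero set in $\operatorname{Spec} A$ is the linear subspace corresponding to $V^g = \ker(g - \id) \subseteq V = \bigoplus_i \kk x_i$. The hypothesis that $G$ contains no nontrivial reflection says precisely that $\operatorname{codim}_V V^g \geq 2$ for all $g \neq 1$, so $Z := \bigcup_{g \neq 1}\{\mathfrak a_g = 0\}$ is a $G$-stable closed subset of $\operatorname{Spec} A$ of codimension $\geq 2$. Its image $W$ under the finite (hence closed) morphism $\operatorname{Spec} A \to \operatorname{Spec} R$ is closed of codimension $\geq 2$ as well, $G$ acts freely on $\operatorname{Spec} A \setminus Z$, and (using $\operatorname{char}\kk = 0$ for separability) the morphism $\operatorname{Spec} A \setminus Z \to \operatorname{Spec} R \setminus W$ is finite étale, indeed a $G$-torsor.

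Now fix a height-one prime $\mathfrak p$ of $R$. Since $W$ has codimension $\geq 2$, $\mathfrak p \notin W$, so $\operatorname{Spec} R_{\mathfrak p}$ lies in $\operatorname{Spec} R \setminus W$; thus $A_{\mathfrak p}$ (localization at the central set $R \setminus \mathfrak p$) is a finite étale $R_{\mathfrak p}$-algebra, free over the DVR $R_{\mathfrak p}$, with $(A_{\mathfrak p})^G = R_{\mathfrak p}$ and $G$ acting freely on $\operatorname{Spec} A_{\mathfrak p}$ — that is, $A_{\mathfrak p}/R_{\mathfrak p}$ is $G$-Galois. The classical Galois theory of commutative rings (Auslander--Goldman; Chase--Harrison--Rosenberg) then gives that the Auslander map $A_{\mathfrak p} \# G \to \End_{R_{\mathfrak p}}(A_{\mathfrak p})$ is an isomorphism. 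Since $R$ is central in $A \# G$ we have $(A\#G)_{\mathfrak p} = A_{\mathfrak p} \# G$, and since $A$ is finitely presented over $R$, $\End_R(A)_{\mathfrak p} = \End_{R_{\mathfrak p}}(A_{\mathfrak p})$; under these identifications the localized map is exactly $(\gamma_{A,G})_{\mathfrak p}$. Combining with the first paragraph (equivalently: $A\#G = \bigcap_{\operatorname{ht}\mathfrak p = 1}(A\#G)_{\mathfrak p} = \bigcap_{\operatorname{ht}\mathfrak p = 1}\End_R(A)_{\mathfrak p} = \End_R(A)$ by reflexivity, the intersections taken inside the common localization at the zero ideal), $\gamma_{A,G}$ is an isomorphism.

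The hard part will be the middle step: converting the linear-algebra condition "$G$ has no nontrivial reflection" into the geometric statement that $A$ is unramified over $A^G$ outside a codimension-$\geq 2$ locus, and verifying the hypotheses needed to invoke commutative Galois theory — étaleness (this is where $\operatorname{char}\kk = 0$ enters, via separability), freeness of the $G$-action on the relevant primes, and the projectivity of $A_{\mathfrak p}$ over the DVR $R_{\mathfrak p}$. Once these are in hand, injectivity of $\gamma_{A,G}$, reflexivity of both sides, and the passage from a codimension-one isomorphism to a global one are all formal. This is precisely the step that has no literal analogue when $A$ is noncommutative, where "étale in codimension one" must be replaced by a more robust measure of the failure of freeness — such as the pertinency of the pair $(A,G)$ — which is the route a generalization of the theorem will have to take.
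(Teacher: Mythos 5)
The paper does not prove this statement---it is quoted directly from Auslander's paper \cite{A}---so there is no internal argument to compare against; your proposal is the classical proof (injectivity via linear independence of automorphisms, reflexivity of both $A\#G$ and $\End_{A^G}(A)$ over the normal domain $A^G$, the no-reflections hypothesis forcing the quotient map to be \'etale outside a codimension-two locus, the Auslander--Goldman/Chase--Harrison--Rosenberg Galois theory at height-one primes, and the reflexive descent from codimension one), and it is correct. This is essentially the same route as the cited source, so nothing further is needed.
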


We call the map
$\gamma_{A,G}$ the {\sf Auslander map} of the $G$-action on $A$.
In general, this map is neither injective nor surjective.  The category of (left) modules over $A \# G$ is equivalent to the category of
(left) $A$-modules with compatible left $G$-actions and their homomorphisms. Auslander's Theorem identifies an $A^G$-module (namely $A$) whose endomorphism ring
has finite global dimension.  




Natural generalizations of Auslander's result include replacing $A$ with a noncommutative ring and
replacing the action of the group $G$ with the action
of a Hopf algebra. Significant progress has been made in both directions,
see \cite{BHZ2,BHZ1,CKWZ2,CKWZ1,EW,M,MU}.
Our interest is in the former. It is conjectured that the Auslander map 
is an isomorphism for a noetherian AS regular domain $A$ and a group
$G$ that does not contain a reflection.

Bao, He, and Zhang \cite{BHZ2,BHZ1} (using some ideas that appeared
in an earlier paper of Buchweitz \cite{Buch}) prove that the
Auslander map is related to an invariant of the
$G$-action on $A$ known as the pertinency.  
Let $A$ be an affine algebra generated in degree 1 and $G$ a finite 
subgroup of $\GL_n(\kk)$ acting on $A_1$.
The {\sf pertinency} of the $G$-action on $A$ 
\cite[Definition 0.1]{BHZ2} is defined to be
\[ \p(A,G) = \GKdim A - \GKdim (A\# G)/(f_G)\]
where $(f_G)$ is the two sided ideal of $A\#G$ generated by
$f_G = \sum_{g \in G} 1\# g$, and GKdim is the Gelfand-Kirillov (GK) dimension of the algebra. 
In what follows, we often drop the subscript on $f_G$ when the group
$G$ is clear from the context.  


A connected $\NN$-graded algebra $A$ is {\sf{Artin-Schelter (AS) regular}}
provided it has finite GK dimension, finite global dimension $d$, and 
satisfies $\Ext_A^i(_{A}\kk,A)=\delta_{id} \kk(\ell)$ for some $\ell \in \ZZ$. An algebra
$A$ is {\sf{Cohen-Macaulay (CM)}} if for every graded $A$ module
$j(M)+\GKdim(M)=\GKdim(A)$ where $j(M)=\min\{ i : \Ext_A^i(M,A)\neq 0\}$.

\begin{thm}[{\cite[Theorem 0.3]{BHZ1}}] \label{thm.bhz}
Let $A$ be a noetherian, connected graded, AS regular, CM
$\kk$-algebra of GK dimension at least $2$. Let $G$ be a group acting linearly 
on $A$. Then the Auslander map is an isomorphism if and only if $\p(A,G) \geq 2$.
\end{thm}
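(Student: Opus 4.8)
The plan is to adapt the idempotent-reduction argument of Buchweitz \cite{Buch}. Put $B=A\#G$ and $e=\tfrac{1}{|G|}f_G\in B$; since $\kk$ has characteristic zero and $f_G^2=|G|\,f_G$, the element $e$ is idempotent, and the two-sided ideal $(f_G)$ equals $BeB$. A direct computation with the Reynolds operator shows that $eBe\cong A^G$ as graded algebras and that $Be\cong A$ as graded right $A^G$-modules, and under these identifications the natural algebra map $B\to\End_{eBe}(Be)$ sending $b$ to left multiplication by $b$ is precisely the Auslander map $\gamma_{A,G}$. So it suffices to control the kernel and cokernel of this left-multiplication map.

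The key step is to establish the exact sequence
\[
0\longrightarrow\mathrm{Hom}_{B^{\op}}\!\big(B/(f_G),B\big)\longrightarrow B\xrightarrow{\ \gamma_{A,G}\ }\End_{A^G}(A)\longrightarrow\mathrm{Ext}^1_{B^{\op}}\!\big(B/(f_G),B\big)\longrightarrow 0 .
\]
Here $\ker\gamma_{A,G}$ is the left annihilator $\{b\in B: b(f_G)=0\}$, which is canonically $\mathrm{Hom}_{B^{\op}}\big(B/(f_G),B\big)$; the remaining two terms come from applying $\mathrm{Hom}_{B^{\op}}(-,B)$ to the short exact sequence $0\to(f_G)\to B\to B/(f_G)\to 0$ of right $B$-modules, using that $B$ is free over itself (so $\mathrm{Ext}^1_{B^{\op}}(B,B)=0$) and identifying $\mathrm{Hom}_{B^{\op}}\big((f_G),B\big)$ with $\End_{eBe}(Be)=\End_{A^G}(A)$ in such a way that the restriction map $B=\mathrm{Hom}_{B^{\op}}(B,B)\to\mathrm{Hom}_{B^{\op}}\big((f_G),B\big)$ becomes $\gamma_{A,G}$; this identification is the Morita-context computation underlying Buchweitz's observation. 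Granting the sequence, $\gamma_{A,G}$ is an isomorphism if and only if $\mathrm{Hom}_{B^{\op}}\big(B/(f_G),B\big)=0=\mathrm{Ext}^1_{B^{\op}}\big(B/(f_G),B\big)$, that is, if and only if $\grade_{B^{\op}}\big(B/(f_G)\big)\ge 2$, where $\grade_{B^{\op}}(M)=\min\{i:\mathrm{Ext}^i_{B^{\op}}(M,B)\neq 0\}$.

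It remains to convert this grade inequality into the pertinency inequality, and here I would use that $B=A\#G$ is again noetherian and Cohen--Macaulay with $\GKdim B=\GKdim A$. Indeed $B$ is free of rank $|G|$ over the noetherian Cohen--Macaulay algebra $A$, so the restriction to $A$ of a finitely generated projective $B$-module is $A$-projective; a change-of-rings computation then gives $\GKdim_B(M)=\GKdim_A(M|_{A})$ and $\grade_B(M)=\grade_A(M|_{A})$ for every finitely generated graded $B$-module $M$, so the Cohen--Macaulay property of $A$ forces that of $B$ (the left--right distinction being harmless, as $\GKdim$ and the Cohen--Macaulay property are symmetric). Consequently
\[
\grade_{B^{\op}}\big(B/(f_G)\big)=\GKdim B-\GKdim\big((A\#G)/(f_G)\big)=\GKdim A-\GKdim\big((A\#G)/(f_G)\big)=\p(A,G),
\]
and combining with the previous paragraph shows that $\gamma_{A,G}$ is an isomorphism if and only if $\p(A,G)\ge 2$. (The hypothesis $\GKdim A\ge 2$ is exactly what keeps $\p(A,G)\ge 2$ from degenerating to the trivial condition $(A\#G)/(f_G)=0$.)

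The principal obstacle is the middle step: establishing the four-term exact sequence rigorously, and in particular the identification $\mathrm{Hom}_{B^{\op}}\big((f_G),B\big)\cong\End_{A^G}(A)$ compatibly with $\gamma_{A,G}$. This is a double-centralizer/Morita-context argument that requires care with the left- versus right-module structures and with the comparison of $BeB$ with $Be\otimes_{eBe}eB$; it is the point at which one leans on \cite{Buch} and on the hypotheses on $A$. A secondary and more routine obstacle is to confirm that $A\#G$ inherits the Cohen--Macaulay property from $A$ via the free extension $A\hookrightarrow A\#G$, so that the grade--GK-dimension dictionary applies.
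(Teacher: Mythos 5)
This statement is not proved in the paper at all: it is quoted from Bao--He--Zhang \cite[Theorem 0.3]{BHZ1}, so there is no internal argument to compare yours against, and your sketch is in substance a reconstruction of that cited proof (which itself builds on Buchweitz \cite{Buch}): pass to the idempotent $e=\frac{1}{|G|}f_G$, identify $\gamma_{A,G}$ with $B\to\End_{eBe}(Be)$ for $B=A\#G$, dualize $0\to BeB\to B\to B/BeB\to 0$ to translate bijectivity into $\grade_B\bigl(B/(f_G)\bigr)\ge 2$, and use the Cohen--Macaulay property of $B$ to convert grade into GK-codimension, i.e.\ $\p(A,G)$. The two obstacles you flag are genuine but standard and fillable: the identification $\operatorname{Hom}_B\bigl((f_G),B\bigr)\cong\End_{A^G}(A)$ follows once $\operatorname{Hom}_B\bigl(B/(f_G),B\bigr)=0$, because the kernel of the multiplication map $Be\otimes_{eBe}eB\to BeB$ is annihilated by $BeB$ and hence has vanishing dual, and this weaker hypothesis suffices to run both directions of the equivalence; and the CM transfer from $A$ to $A\#G$ should invoke not mere projectivity but the Frobenius-extension isomorphism $\operatorname{Hom}_A(B,A)\cong B$, which yields $\operatorname{Ext}^i_B(M,B)\cong\operatorname{Ext}^i_A(M,A)$ and hence the grade--GK-dimension dictionary you use.
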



In this paper we compute lower bounds on the pertinency of some group actions on several families of noetherian AS regular domains including
\begin{itemize}
\item the {\sf $(-1)$-skew polynomial algebras} $V_n$, 
generated by $x_1,\hdots,x_n$ subject to the relations $x_ix_j+x_jx_i=0$ 
for $i \neq j$;

\item the {\sf $(-1)$-quantum Weyl algebras} $W_n$, generated 
by $x_1,\hdots,x_n$ with relations $x_ix_j+x_jx_i=1$ for $i \neq j$;
taking the standard filtration, $\gr(W_n)=V_n$;

\item the {\sf three-dimensional Sklyanin algebras} $S(a,b,c)$,
generated by $x_1,x_2,x_3$ with parameters $(a:b:c) \in \PP^2$ and relations 
\[ax_1x_2 + bx_2x_1 + cx_3^2 = ax_2x_3 + bx_3x_2 + cx_1^2 = ax_3x_1 + bx_1x_3 + cx_2^2 = 0;\]

\item the {\sf graded noetherian down-up algebras} $A(\alpha,\beta)$ with parameters 
$\alpha,\beta \in \kk$ with $\beta \neq 0$ generated by $x$ and $y$ with relations
\[	x^2y = \alpha xyx + \beta yx^2 \mbox{ and } xy^2 = \alpha yxy + \beta y^2x.\]
\end{itemize}
The symmetric group $\cS_n$ acts on $V_n$ by permutations; that is, if $\sigma \in \cS_n$ then $\sigma(x_i)=x_{\sigma(i)}$,
extending linearly and multiplicatively.
Throughout we assume that any subgroup of $\cS_n$ acts on $V_n$
in this way unless otherwise stated.
For this action on $V_n$, $\cS_n$ does not contain any reflections \cite[Lemma 1.7(4)]{KKZ2}
and our result (Theorem \ref{thm.main}) proves that the Auslander map is an isomorphism 
for this action on $V_n$.
This result is quite different than for commutative polynomial rings, 
where the Auslander map is not an isomorphism for subgroups of $\cS_n$ that contain a transposition. 
In Section \ref{sec.comp} we compute the pertinency of
most subgroups of $\cS_3$ and $\cS_4$ acting on $V_3$ and $V_4$, respectively. 

By \cite[Theorem 0.2]{Z2}, \cite[Corollary 6.2]{L}, and \cite{KMP}, 
$V_n$, $S(a,b,c)$, and $A(\alpha,\beta)$ are AS regular and CM and thus satisfy the hypotheses of Theorem \ref{thm.bhz}. In the following theorem,
for each $A$ and $G$ given, we verify that $\p(A,G) \geq 2$, and hence
    prove by Theorem \ref{thm.bhz} that the Auslander map is an isomorphism.
\begin{thm} \label{thm.allaus} 
The Auslander map is an isomorphism for the following groups acting on the following algebras:
\begin{enumerate} 
\item any subgroup of $\cS_n$ acting on $V_n$ (Theorem \ref{thm.main}),
\item any subgroup of $\cS_n$ acting on $W_n$ (Theorem \ref{thm.main}),
\item any subgroup of $\cS_3$ acting on $S(1,1,-1)$ (Theorem \ref{thm.main}),
\item any subgroup of weighted permutations acting on $A(-2,-1)$ (Theorem \ref{thm.special}),
\item $\grp{(1~2~3)}$ acting on a generic Sklyanin algebra $S(a,b,c)$ (Theorem \ref{thm.sklyanin}), and
\item any subgroup of $\grp{-I_4, (1~3)(2~4)}$ acting on $V_4$ (Theorem \ref{thm.weighted}).
\end{enumerate}
\end{thm}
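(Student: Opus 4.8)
The plan is to reduce each of the six cases to verifying the single numerical inequality $\p(A,G)\ge 2$ and then invoking Theorem \ref{thm.bhz}; since the excerpt already records that $V_n$, $W_n$ (via the filtration $\gr W_n = V_n$), $S(a,b,c)$, and $A(\alpha,\beta)$ are noetherian, connected graded, AS regular, and CM of GK dimension at least $2$, the hypotheses of Theorem \ref{thm.bhz} are met throughout, so the content is entirely in the pertinency bounds. Accordingly, the ``proof'' of Theorem \ref{thm.allaus} will be essentially a bookkeeping argument: item by item it cites the later theorems (Theorems \ref{thm.main}, \ref{thm.special}, \ref{thm.sklyanin}, \ref{thm.weighted}) in which the estimate $\p(A,G)\ge 2$ is actually established, and combines them with Theorem \ref{thm.bhz}. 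For the filtered algebras $W_n$ one extra remark is needed: pertinency does not increase under passing to the associated graded ring, so a lower bound $\p(V_n, G)\ge 2$ for the permutation action on $V_n = \gr W_n$ forces $\p(W_n,G)\ge 2$ as well; this is exactly why items (1) and (2) can be handled simultaneously in Theorem \ref{thm.main}.

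Concretely, I would carry it out in the following order. First, fix the setup: note that all four families of algebras in question are noetherian AS regular CM domains of GK dimension $n\ge 2$ (GK dimension $n$ for $V_n$ and $W_n$, $3$ for $S(a,b,c)$, $3$ for $A(\alpha,\beta)$), so Theorem \ref{thm.bhz} applies verbatim to any linear group action on them. Second, observe that in each case the group acts linearly — by permutations, by weighted permutations, or by the listed generators — so ``$G$ acts linearly on $A$'' in the sense required by Theorem \ref{thm.bhz} holds. Third, dispatch the six items: (1) and (2) follow from the computation $\p(V_n,G)\ge 2$ in Theorem \ref{thm.main} together with the inequality $\p(W_n,G)\ge \p(\gr W_n, G) = \p(V_n,G)$; (3) is the case $A = S(1,1,-1)$ of Theorem \ref{thm.main}; (4) is Theorem \ref{thm.special}; (5) is Theorem \ref{thm.sklyanin}; (6) is Theorem \ref{thm.weighted}. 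In each case one reads off $\p(A,G)\ge 2$ and concludes that $\gamma_{A,G}$ is an isomorphism.

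The genuinely hard work — the part I expect to be the main obstacle — is of course not in this assembly step but in the pertinency computations themselves, which are deferred to the cited theorems. Estimating $\p(A,G) = \GKdim A - \GKdim(A\#G)/(f_G)$ from below means showing that the quotient $(A\#G)/(f_G)$ is ``small'', i.e.\ of GK dimension at most $\GKdim A - 2$; in practice this requires exhibiting enough relations in $(A\#G)/(f_G)$ (often by finding explicit elements of $A$ that become nilpotent or that kill large submodules modulo $f_G$) to bound the growth of the quotient. For the $(-1)$-skew polynomial rings the anticommutation relations make many monomials square to scalars, which is what drives the bound for arbitrary subgroups of $\cS_n$; for the Sklyanin case with the $3$-cycle, and for the down-up algebra $A(-2,-1)$, the argument is more delicate and relies on the specific defining relations. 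For the present theorem, however, I would keep the proof short: state that the hypotheses of Theorem \ref{thm.bhz} hold for each algebra, cite the relevant later theorem for the bound $\p(A,G)\ge 2$ in each of the six cases, note the $\gr$-inequality needed to pass from $V_n$ to $W_n$, and conclude.

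\begin{proof}[Proof of Theorem \ref{thm.allaus}]
Each of the algebras $V_n$, $W_n$, $S(a,b,c)$, and $A(\alpha,\beta)$ is a noetherian, connected graded, AS regular, CM $\kk$-algebra of GK dimension at least $2$ (with GK dimension $n$ for $V_n$ and $W_n$, and $3$ for $S(a,b,c)$ and $A(\alpha,\beta)$); this is recorded above using \cite[Theorem 0.2]{Z2}, \cite[Corollary 6.2]{L}, and \cite{KMP}. In every case listed the group $G$ acts linearly, so Theorem \ref{thm.bhz} applies: $\gamma_{A,G}$ is an isomorphism if and only if $\p(A,G)\ge 2$. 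It therefore suffices to establish the bound $\p(A,G)\ge 2$ in each case.

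For (1), (3), and a reformulation of (2), Theorem \ref{thm.main} shows $\p(V_n,G)\ge 2$ for any subgroup $G\le\cS_n$ acting on $V_n$ by permutations, and likewise $\p(S(1,1,-1),G)\ge 2$ for any subgroup $G\le\cS_3$. For (2), since the standard filtration on $W_n$ satisfies $\gr(W_n)=V_n$ and pertinency does not increase under passage to the associated graded algebra, we have $\p(W_n,G)\ge\p(V_n,G)\ge 2$. For (4), Theorem \ref{thm.special} gives $\p(A(-2,-1),G)\ge 2$ for $G$ a subgroup of weighted permutations. For (5), Theorem \ref{thm.sklyanin} gives $\p(S(a,b,c),\grp{(1~2~3)})\ge 2$ for generic $S(a,b,c)$. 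For (6), Theorem \ref{thm.weighted} gives $\p(V_4,G)\ge 2$ for any subgroup $G\le\grp{-I_4,(1~3)(2~4)}$. In each case Theorem \ref{thm.bhz} then shows that the Auslander map is an isomorphism.
\end{proof}
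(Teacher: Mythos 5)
Your proposal follows the paper's proof exactly: the paper's argument for Theorem \ref{thm.allaus} is precisely the bookkeeping you describe --- check the hypotheses of Theorem \ref{thm.bhz}, read off $\p(A,G)\ge 2$ from Theorems \ref{thm.main}, \ref{thm.special}, \ref{thm.sklyanin}, and \ref{thm.weighted}, and conclude.

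One technical slip to fix in your write-up: $W_n$ is \emph{not} connected graded (the relations $x_ix_j+x_jx_i=1$ are inhomogeneous), so Theorem \ref{thm.bhz} cannot be applied to $W_n$ itself, and the paper's preamble deliberately omits $W_n$ from the list of algebras satisfying its hypotheses. You correctly invoke the filtration inequality $\p(W_n,G)\ge\p(\gr W_n,G)=\p(V_n,G)$, but the passage from that bound to the Auslander isomorphism for the filtered algebra $W_n$ needs the filtered analogue of the graded result; the paper gets both the inequality and the isomorphism for $W_n$ from \cite[Proposition 3.6 and Corollary 3.7]{BHZ2} inside the proof of Theorem \ref{thm.main}. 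With that citation substituted for the direct appeal to Theorem \ref{thm.bhz} in case (2), your argument matches the paper's.
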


To bound $\p(A,G)$, we make frequent use of the following theorem.


\begin{thm}[Theorem {\ref{thm.subgrp}}]
Let $G$ be a group acting on $A$ and let $H \leq G$ be a subgroup. Then $\p(A,G) \leq \p(A,H)$.
\end{thm}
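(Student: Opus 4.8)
The plan is to compare the algebras $(A\#G)/(f_G)$ and $(A\#H)/(f_H)$ directly. Since $\p(A,G)=\GKdim A-\GKdim (A\#G)/(f_G)$ and likewise for $H$, it suffices to prove the inequality $\GKdim (A\#H)/(f_H)\le \GKdim (A\#G)/(f_G)$. To that end I would consider the algebra homomorphism $\phi\colon A\#H\to (A\#G)/(f_G)$ obtained by restricting the canonical surjection $A\#G\twoheadrightarrow (A\#G)/(f_G)$ along the inclusion $A\#H\hookrightarrow A\#G$; its kernel is $(f_G)\cap (A\#H)$, where $(f_G)$ denotes the two-sided ideal of $A\#G$ generated by $f_G$. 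The crux of the proof is the claim that this kernel is contained in the two-sided ideal of $A\#H$ generated by $f_H$. Granting the claim, the image $\bar B:=\phi(A\#H)$ is a subalgebra of $(A\#G)/(f_G)$ that receives a surjection from $(A\#H)/(f_H)$, so $\GKdim (A\#H)/(f_H)\le \GKdim \bar B\le \GKdim (A\#G)/(f_G)$, the first inequality because GK dimension does not increase under algebra quotients and the second because it does not increase on passing to a subalgebra.

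To prove the claim I would use a bimodule retraction. Decompose $A\#G=(A\#H)\oplus M$, where $M=\bigoplus_{g\in G\setminus H}A\#g$ is the span of the monomials supported off $H$. Because $hg\notin H$ and $gh\notin H$ whenever $h\in H$ and $g\in G\setminus H$, the subspace $M$ is an $(A\#H)$-sub-bimodule, so the projection $\pi\colon A\#G\to A\#H$ along this decomposition is a homomorphism of $(A\#H)$-bimodules which restricts to the identity on $A\#H$. Every element of $(f_G)$ is a finite sum $\sum_i x_i f_G y_i$ with $x_i,y_i\in A\#G$, so if $z\in (f_G)\cap(A\#H)$ then $z=\pi(z)=\sum_i\pi(x_i f_G y_i)$, and it is enough to show $\pi(x f_G y)$ lies in the ideal generated by $f_H$ for $x=a\#g$ and $y=b\#g'$. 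This is a short computation: $x f_G y=\sum_{\gamma\in G}a\,(g\gamma)(b)\#g\gamma g'$, and collecting the terms with $g\gamma g'\in H$ and reindexing by $\eta=g\gamma g'$ gives $\pi(xf_Gy)=\sum_{\eta\in H}a\,\eta\big(g'^{-1}(b)\big)\#\eta=(a\#1)\,f_H\,\big(g'^{-1}(b)\#1\big)$, which visibly lies in $(A\#H)f_H(A\#H)$.

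I expect the reindexing step to be the only real obstacle: one must recognise that, after conjugating and translating by arbitrary elements of $G$, the ``$H$-supported part'' of $f_G$ is always exactly $f_H$ up to left and right multiplication by elements of $A\#H$ --- this is precisely what forces the bimodule projection of the ideal $(f_G)$ into the ideal $(f_H)$. Everything else is routine: verifying that $M$ is a sub-bimodule and that $\pi$ is a bimodule map, and invoking the standard monotonicity of GK dimension under quotients and subalgebras. No assumption on the invertibility of $|G|$ or on AS regularity is used; the statement concerns GK dimensions of $A\#G$, $A\#H$ and their quotients alone.
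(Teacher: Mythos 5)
Your proof is correct, and its computational heart is the same observation the paper isolates as Lemma \ref{lem.subgrp}: the $H$-supported component of any element of $(f_G)$ lies in $(A\#e)\,f_H\,(A\#e)$. The packaging, however, is genuinely different. The paper reduces a general expression $\sum_i (a_i\#\sigma_i)f_G(b_i\#\sigma_i')$ to one with trivial group parts using the absorption $(1\#\sigma)f_G=f_G=f_G(1\#\sigma')$, records only the statement $(f_G)\cap A\subseteq (f_H)\cap A$, and then obtains the GK-dimension inequality by citing \cite[Lemma 5.2]{BHZ1}, which identifies $\GKdim (A\#G)/(f_G)$ with $\GKdim A/((f_G)\cap A)$. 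You instead prove the stronger containment $(f_G)\cap(A\#H)\subseteq(f_H)$ directly, via the $(A\#H)$-bimodule projection $\pi$ and the identity $\pi\bigl((a\#g)f_G(b\#g')\bigr)=(a\#e)\,f_H\,(g'^{-1}(b)\#e)$, and then conclude using only the monotonicity of GK dimension under passing to subalgebras and quotients, with no appeal to the Bao--He--Zhang lemma. This makes your argument more self-contained, and your containment immediately implies the paper's Lemma \ref{lem.subgrp} by restricting to elements of $A\#e$; the paper's route is marginally shorter given the cited lemma. One slip of wording: granting your claim, $\bar B\cong (A\#H)/\bigl((f_G)\cap(A\#H)\bigr)$ surjects \emph{onto} $(A\#H)/(f_H)$, rather than ``receiving a surjection from'' it as you wrote; the inequality $\GKdim (A\#H)/(f_H)\le\GKdim\bar B$ that you then assert, justified by non-increase of GK dimension under quotients, is exactly what follows from the correctly oriented surjection, so the argument itself is unaffected.
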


When $A^{\grp{g}}$ has finite global dimension,  $\p(A,\grp{g}) < 2$, and we obtain the following corollary.
\begin{cor}[Corollary {\ref{cor.refl}}]
Let $A$ be a noetherian connected graded $\kk$-algebra and let $G$ be a finite subgroup of $\Aut_{\gr}(A)$. 
Suppose that $g \in G$ is a reflection. If $A$ and $A^{\langle g \rangle}$ have finite global
dimension, then the Auslander map $\gamma_{A,G}$ is not an isomorphism.
\end{cor}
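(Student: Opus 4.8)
The plan is to deduce this corollary from the two results stated immediately before it, namely the subgroup inequality $\p(A,G)\le \p(A,H)$ (Theorem~\ref{thm.subgrp}) and the Bao--He--Zhang criterion (Theorem~\ref{thm.bhz}) characterizing when the Auslander map is an isomorphism in terms of pertinency. The key observation is that a reflection $g$ generates a cyclic subgroup $\langle g\rangle \le G$ whose fixed ring $A^{\langle g\rangle}$ has finite global dimension by hypothesis, and that finite global dimension of the fixed ring forces the pertinency of the cyclic action to be small.

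First I would make precise the parenthetical remark preceding the corollary: if $A^{\langle g\rangle}$ has finite global dimension, then $\p(A,\langle g\rangle) < 2$. The idea is that the Auslander map $\gamma_{A,\langle g\rangle}$ cannot be an isomorphism, because if it were, then $\End_{A^{\langle g\rangle}}(A)\cong A\#\langle g\rangle$ would have finite global dimension; combined with $A$ having finite global dimension and $A$ being a faithful $A^{\langle g\rangle}$-module (so that $A^{\langle g\rangle}$ is a direct summand of $\End_{A^{\langle g\rangle}}(A)$ as a bimodule, in characteristic zero), one concludes $A^{\langle g\rangle}$ itself has finite global dimension only when $g$ is \emph{not} a reflection --- this is precisely the content of the Shephard--Todd--Chevalley-type theorem for these algebras (e.g.\ \cite[Theorem 5.5]{KKZ4} in the skew polynomial case) together with the fact that a single reflection generates a group that is \emph{not} generated by bireflections appropriately, so $A^{\langle g\rangle}$ has \emph{infinite} global dimension --- contradiction. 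Hence $\gamma_{A,\langle g\rangle}$ is not an isomorphism. Actually, the cleaner route: since $A^{\langle g\rangle}$ has finite global dimension and $A$ is CM (or more directly, by the structure of the situation), Theorem~\ref{thm.bhz} applied to $H=\langle g\rangle$ says $\gamma_{A,\langle g\rangle}$ is an isomorphism iff $\p(A,\langle g\rangle)\ge 2$; but an isomorphism would give $A\#\langle g\rangle$ finite global dimension, and a theorem of Auslander-type / the argument just sketched shows that when $g$ is a reflection this cannot happen together with $A^{\langle g\rangle}$ having finite global dimension (the skew group algebra $A\#\langle g\rangle$ and $A^{\langle g\rangle}$ are related so that both having finite global dimension would contradict $g$ being a reflection). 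Therefore $\p(A,\langle g\rangle) < 2$.

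With $\p(A,\langle g\rangle) < 2$ in hand, I would apply Theorem~\ref{thm.subgrp} with $H = \langle g\rangle \le G$ to conclude $\p(A,G) \le \p(A,\langle g\rangle) < 2$, so $\p(A,G) < 2$, i.e.\ $\p(A,G) \not\ge 2$. Finally, since $A$ is noetherian, connected graded, AS regular, CM of GK dimension at least $2$ (one should check or assume these hypotheses of Theorem~\ref{thm.bhz} hold here --- the statement says $A$ has finite global dimension, and in the graded connected noetherian setting with the group acting linearly this is the relevant input; if $\GKdim A \le 1$ the statement is vacuous or trivial), Theorem~\ref{thm.bhz} gives that $\gamma_{A,G}$ is an isomorphism if and only if $\p(A,G)\ge 2$, and since the latter fails, $\gamma_{A,G}$ is not an isomorphism.

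The main obstacle is the first step: justifying that finite global dimension of $A^{\langle g\rangle}$ together with $g$ being a reflection forces $\p(A,\langle g\rangle) < 2$. This is where one must invoke the relationship between the skew group algebra, the endomorphism ring, and the invariant ring --- specifically that an isomorphism $A\#\langle g\rangle \cong \End_{A^{\langle g\rangle}}(A)$ would transport finite global dimension to the invariant ring $A^{\langle g\rangle}$ only in the absence of reflections, which is the generalized Shephard--Todd--Chevalley phenomenon; since $g$ \emph{is} a reflection one expects $A^{\langle g\rangle}$ to be "regular" exactly in the cases where the Auslander map fails, so the contrapositive framing must be set up carefully. One must also confirm the CM and AS regular hypotheses needed to invoke Theorem~\ref{thm.bhz} for the group $G$ (and for $\langle g\rangle$), which in the stated generality may require $A$ to be AS regular, not merely of finite global dimension; I would add that hypothesis or cite that for the connected graded noetherian algebras under consideration finite global dimension plus the other standing hypotheses suffice.
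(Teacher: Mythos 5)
Your outer framework---bound $\p(A,\grp{g})$ below $2$, then use Theorem~\ref{thm.subgrp} to get $\p(A,G)\le\p(A,\grp{g})<2$ and Theorem~\ref{thm.bhz} to conclude that $\gamma_{A,G}$ is not an isomorphism---is exactly the paper's. The gap is the first step: you never actually establish $\p(A,\grp{g})<2$, and both arguments you sketch for it break down. Your first argument asserts that because $g$ is a reflection, $A^{\grp{g}}$ has \emph{infinite} global dimension; this contradicts the hypothesis of the corollary itself (finiteness of $\gldim A^{\grp{g}}$ is assumed) and gets the Shephard--Todd--Chevalley phenomenon backwards: invariants under groups generated by reflections are precisely the ones expected to have finite global dimension. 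Your ``cleaner route'' replaces this with the claim that $A\#\grp{g}$ and $A^{\grp{g}}$ both having finite global dimension is incompatible with $g$ being a reflection, but no such incompatibility exists: when $|G|$ is invertible in $\kk$ one always has $\gldim(A\#\grp{g})=\gldim(A)<\infty$, and for instance $A=\kk[x_1,\dots,x_n]$ with $g$ a classical reflection has $A^{\grp{g}}$ again a polynomial ring, yet the Auslander map fails. So global-dimension bookkeeping alone cannot detect the failure of $\gamma_{A,\grp{g}}$.

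The missing idea is a graded degree obstruction. Since $A^{\grp{g}}$ has finite global dimension, \cite[Lemmas 1.10 and 1.11]{KKZ1} show that $A$ is a finitely generated \emph{free} graded $A^{\grp{g}}$-module, so $\End_{A^{\grp{g}}}(A)$ is a matrix ring over $A^{\grp{g}}$; because the free generators occur in more than one degree ($g\neq e$ forces $A\neq A^{\grp{g}}$, so besides $1$ in degree $0$ there is a generator in positive degree), this endomorphism ring has nonzero components in negative degrees, whereas $A\#\grp{g}$ is concentrated in nonnegative degrees. Hence the two are not isomorphic as graded algebras, so $\gamma_{A,\grp{g}}$ is not an isomorphism and, by Theorem~\ref{thm.bhz} applied to $H=\grp{g}$, $\p(A,\grp{g})<2$. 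From that point on your use of Theorems~\ref{thm.subgrp} and~\ref{thm.bhz} matches the paper; your closing concern about the AS regular/CM hypotheses needed to invoke Theorem~\ref{thm.bhz} applies equally to the paper's own proof and is not where the difficulty lies.
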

\noindent We remark that $A^{\grp{g}}$ has finite global dimension when $g$ is a reflection acting on an
AS regular noetherian domain with Hilbert series $(1-t)^{-n}$ \cite[Theorem 5.3]{KKZ1}, and hence this corollary applies to groups that contain a reflection acting on $V_n$.

Auslander's Theorem is a component of the classical McKay correspondence. 
In \cite{CKWZ2, CKWZ1}, Chan, Walton, Zhang, and the second-named author study a 
quantum version of the McKay correspondence.
If $A$ is AS regular and $g \in \Aut_{\gr}(A)$,
then {\sf{the homological determinant of $g$}}, denoted $\hdet(g)$, may be computed
using the following formula \cite[Lemma 2.6]{JZ}:
\[ \Tr_A(g,t) = (-1)^n \hdet(g)^{-1} t^{-\ell} + (\text{lower order terms}).\]
With the exception of certain weighted actions on the down-up algebra,
all actions of groups $G$ on $A$ in Theorem \ref{thm.allaus} have {\sf{trivial homological determinant}},
that is, $\hdet(g)=1$ for all $g \in G$.
Hence, as a consequence of our work, the following part of the McKay correspondence applies to the algebras of Theorem \ref{thm.allaus}.
Following the terminology in \cite{CKWZ2}, call an $A$-module $M$ {\sf initial} if $M$ is graded, 
generated in degree $0$ and $M_{< 0} = 0$.
\begin{thm}[{\cite[Theorem A]{CKWZ2}}] \label{thm.mckay} 
Let $A$ be a noetherian AS regular algebra and $G$ a finite group acting
on $A$ as graded automorphisms with trivial homological determinant. 
If $A \# G \cong \End_{A^G} (A)$, then there are bijective correspondences 
between the isomorphism classes of
\begin{itemize}
\item simple left $G$-modules,
\item indecomposable direct summands of $A$ as left $A^G$-modules, and
\item indecomposable, finitely generated, projective, initial, left
  $A \# G$-modules.
\end{itemize}
\end{thm}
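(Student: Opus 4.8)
Since this is a structural statement whose content is a pair of bijections glued along a common middle term, the plan is to identify that middle term as the collection of indecomposable, finitely generated, projective, \emph{initial} left $A\#G$-modules, establish each of the two bijections to it, and compose. Throughout I use that $\kk$ is algebraically closed of characteristic zero, so $\kk G$ is split semisimple: writing $V$ for a representative of each isomorphism class of simple left $G$-modules, the regular representation decomposes as $\kk G \iso \bigoplus_V V^{\oplus \dim V}$. Tensoring over $\kk$ with $A$ yields an isomorphism of $\NN$-graded left $A\#G$-modules
\[ A\#G \;\iso\; A \tensor_\kk \kk G \;\iso\; \bigoplus_V (A \tensor_\kk V)^{\oplus \dim V}, \]
where $V$ sits in degree $0$, the $A\#G$-action on $A\tensor V$ is $(a\#g)\cdot(b \tensor v) = a\,g(b) \tensor g(v)$, and $A\tensor V \iso (A\#G)\tensor_{\kk G}V$ is the module induced from $V$.

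First I would treat the bijection between simple $G$-modules and indecomposable projective initial $A\#G$-modules. Each $A\tensor V$ is finitely generated and projective as a summand of $A\#G$, is generated in degree $0$, and has no negative components (as $A_{<0}=0$), hence is initial. By the induction--restriction adjunction, a degree-$0$ graded $A\#G$-module map $A\tensor V \to M$ is the same datum as a $\kk G$-map $V \to M_0$; taking $M = A\tensor W$, where $M_0 = W$, and applying Schur's lemma shows that the degree-$0$ part of $\operatorname{Hom}_{A\#G}(A\tensor V, A\tensor W)$ is $\kk$ if $V \iso W$ and $0$ otherwise. Because $A\#G$ is noetherian and locally finite $\NN$-graded with semisimple degree-$0$ part $\kk G$, finitely generated graded modules generated in a single degree obey the Krull--Schmidt theorem; hence the $A\tensor V$ are indecomposable with local graded endomorphism ring, are pairwise non-isomorphic, and every indecomposable finitely generated graded projective is a shift $(A\tensor V)(\ell)$ --- with $\ell$ forced to $0$ by initiality. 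This yields the bijection $V \mapsto A\tensor V$.

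Next I would relate indecomposable $A^G$-summands of $A$ to indecomposable projective $A\#G$-modules, using the hypothesis $A\#G \iso \operatorname{End}_{A^G}(A)$. A standard finiteness argument (Artin--Tate) makes $A$ a finitely generated graded left $A^G$-module, so Krull--Schmidt again applies and we may write $A \iso \bigoplus_i A_i^{\oplus m_i}$ with the $A_i$ indecomposable and pairwise non-isomorphic. The classical ``projectivization'' of Auslander identifies $\operatorname{add}(A)$ with the category of finitely generated projective $\operatorname{End}_{A^G}(A)$-modules (covariantly via $\operatorname{Hom}_{A^G}(A,-)$ for one handedness, contravariantly via $\operatorname{Hom}_{A^G}(-,A)$ for the other), sending indecomposables to indecomposables, reflecting isomorphism, and taking $A$ to $\operatorname{End}_{A^G}(A)$ itself. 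Transporting along the Auslander isomorphism $\gamma_{A,G}$, the set $\{A_i\}$ is in bijection with the indecomposable finitely generated projective left $A\#G$-modules; comparing the resulting decomposition of $\operatorname{End}_{A^G}(A)$ into indecomposable projective summands with $A\#G \iso \bigoplus_V (A\tensor V)^{\oplus \dim V}$ and invoking Krull--Schmidt uniqueness matches these projectives with the $A\tensor V$. Composing with the first bijection exhibits the indecomposable summand of $A$ attached to a simple $V$ as the isotypic multiplicity space $\operatorname{Hom}_{\kk G}(V,A) \iso (A\tensor V^*)^G$, which is the expected quantum-McKay description.

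I expect the main obstacles to be (i) making the graded Krull--Schmidt machinery and the degree bookkeeping fully rigorous --- tracking shifts so that the intrinsic condition ``initial'' on the $A\#G$-side corresponds exactly to ``generated in degree $0$'' for $A\tensor V$, and ruling out a spurious reindexing $V \leftrightarrow V'$ or an overall shift when identifying the projective $\operatorname{Hom}_{A^G}(A_i,A)$ (or $\operatorname{Hom}_{A^G}(A,A_i)$) with $A\tensor V$, which is resolved by comparing degree-$0$ components as in the second paragraph; and (ii) keeping the left/right module conventions straight between $\operatorname{End}_{A^G}(A)$ acting on $A$ and the stated left $A\#G$-modules. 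As for the hypotheses: semisimplicity of $\kk G$ and finiteness of the global dimension of $A$ --- hence of $A\#G \iso \operatorname{End}_{A^G}(A)$ --- are what make the relevant module categories behave, while the trivial-homological-determinant assumption guarantees, via the theorem of J\o rgensen--Zhang, that $A^G$ is AS Gorenstein, which is what places the $A_i$ among the maximal Cohen--Macaulay $A^G$-modules and identifies $A\#G$ with a noncommutative (crepant) resolution of $\operatorname{Spec} A^G$ in the quantum McKay picture.
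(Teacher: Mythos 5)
Note first that the paper does not prove Theorem \ref{thm.mckay} at all: it is imported verbatim from \cite[Theorem A]{CKWZ2} and used as a black box, so there is no in-paper proof to compare against. Measured against the argument in the cited source, your sketch is correct in outline and follows essentially the same standard route: decompose $A\#G \iso \bigoplus_V (A\tensor V)^{\oplus \dim V}$ via induction of the regular representation of the split semisimple algebra $\kk G$, identify the $A\tensor V$ as the indecomposable finitely generated projective initial left $A\#G$-modules (graded projective cover theory over the graded radical $A_{\geq 1}\#G$, with initiality fixing the shift), and then use Auslander projectivization through the isomorphism $A\#G \iso \End_{A^G}(A)$ to match indecomposable graded $A^G$-summands of $A$ with indecomposable projectives, Krull--Schmidt doing the final matching. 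Two points deserve more care than your sketch gives them: (i) Krull--Schmidt and indecomposability should be run in the category of finitely generated graded modules with degree-zero morphisms (where Hom-spaces are finite-dimensional because everything is locally finite), since the full graded endomorphism ring of $A\tensor V$ is not literally local, only graded local; your degree-zero Schur computation is exactly what makes this work, but the statement ``finitely generated graded modules generated in a single degree obey Krull--Schmidt'' should be justified on these grounds, and one must also say in which category ``indecomposable summand of $A$'' is meant. (ii) Your closing remark about the trivial homological determinant is a red herring for these three bijections --- your argument never uses it, and indeed the hypothesis enters \cite{CKWZ2} for the finer structure (Gorensteinness of $A^G$, the CM/cluster-tilting picture as in Theorem \ref{thm.isol}), not for the counting statement itself; carrying an unused hypothesis is harmless, but attributing the bijection to it is misleading. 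With those caveats, the proposal is a faithful reconstruction of the cited proof rather than a genuinely different argument.
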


We conclude in Section \ref{sec.sing} by providing examples of graded isolated singularities in 
the sense of Ueyama \cite{U}. For a graded algebra $A$, let $\grmod A$ denote the category of 
finitely-generated graded right $A$-modules. For a module $M \in \grmod A$, $x \in M$ is called 
torsion if there exists a positive integer $n$ such that $x A_{\geq n} = 0$. The module $M$ is 
called a torsion module if every element of $M$ is torsion. 
Let $\tors A$ denote the full subcategory of $\grmod A$ consisting of torsion modules.
We can then define the quotient 
category $\tails A = \grmod A / \tors A$. Following \cite{U}, we say that $A^G$ is a {\sf graded isolated singularity} if $\gldim(\tails A^G) < \infty$. Mori and Ueyama prove that if the Auslander map is an isomorphism, then $A^G$ is a graded isolated singularity if and only if $A\#G/(f_G)$ is finite-dimensional \cite[Theorem 3.10]{MU}. For several algebras $A$ and groups $G$, we are able to show that $A\#G/(f_G)$ is finite-dimensional.  These examples are of particular interest, since for a graded isolated singularity $A^G$, the category of graded CM $A^G$-modules has several nice properties (we refer the reader to \cite{U2} for undefined terminology).  
\begin{thm}[{\cite[Theorem 3.10 and Example 3.13]{U2}}] \label{thm.isol} Let $A$ be a noetherian AS regular algebra of dimension $d \geq 2$, and let $G$ be a group acting linearly on $A$ with trivial homological determinant. If $A^G$ is a graded isolated singularity, then
\begin{itemize}
\item $A^G$ is an AS Gorenstein algebra of dimension $d \geq 2$,
\item $A \in \CM^{\gr}\left(A^G\right)$ is a $(d-1)$-cluster tilting module, and
\item for $M \in \CM^{\gr}\left(A^G\right)$ the $\kk$-vector spaces $\Ext_{A^G}^1(A, M)$ and $\Ext_{A^G}^1(M,A)$ are finite-dimensional.
\end{itemize}
\end{thm}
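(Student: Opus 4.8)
The plan is to reduce the statement to the cited work of Ueyama, after checking that the hypotheses in force here match his. For the first bullet, I would invoke the standard fact that if $A$ is AS regular and $G$ acts on $A$ as graded automorphisms with trivial homological determinant, then the invariant ring $A^G$ is AS Gorenstein (the graded analogue of Watanabe's theorem; see J{\o}rgensen--Zhang, or Kirkman--Kuzmanovich--Zhang). Since $G$ is finite we have $\GKdim A^G = \GKdim A = d \geq 2$, so $A^G$ is AS Gorenstein of dimension $d \geq 2$, which is exactly the first conclusion.

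For the second and third bullets, the content is genuinely Ueyama's, so the "proof" is really a matter of verifying applicability. The key observation is that the hypothesis "$A^G$ is a graded isolated singularity", i.e.\ $\gldim(\tails A^G) < \infty$, is precisely the condition under which \cite[Theorem 3.10]{U2} applies; granting it, that theorem identifies $A$, viewed as an object of $\CM^{\gr}(A^G)$, as a $(d-1)$-cluster tilting module. The finiteness of $\Ext_{A^G}^1(A,M)$ and $\Ext_{A^G}^1(M,A)$ for $M \in \CM^{\gr}(A^G)$ --- recorded in \cite[Example 3.13]{U2} --- then follows from the isolated-singularity condition: passing to $\tails A^G$ annihilates these $\Ext^1$ groups, so each is a finitely generated torsion module over a connected graded $\kk$-algebra, hence finite-dimensional over $\kk$.

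The main obstacle I anticipate is bookkeeping rather than new mathematics: one must confirm that the conventions used in \cite{U2} --- left versus right modules, the precise meaning of "acting linearly", and the normalization of $\hdet$ --- agree with those fixed in the introduction here, so that \cite[Theorem 3.10 and Example 3.13]{U2} apply verbatim to each pair $(A,G)$ arising in Theorem~\ref{thm.allaus}. Once that alignment is checked, there is nothing further to prove: the theorem is simply the specialization of Ueyama's results to the present setting.
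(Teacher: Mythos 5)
Your proposal is correct and is essentially what the paper does: Theorem~\ref{thm.isol} is quoted directly from Ueyama \cite{U2}, with no independent proof given, so the justification consists precisely of checking that the hypotheses (AS regular $A$ of dimension $d\geq 2$, finite $G$ acting with trivial homological determinant, $A^G$ a graded isolated singularity) match those of \cite[Theorem 3.10 and Example 3.13]{U2}, together with the graded Watanabe-type theorem for the AS Gorenstein statement. Your sketch of why the $\Ext^1$ groups are finite-dimensional is consistent with Ueyama's argument, so there is no gap.
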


We show that $A^G$ is a graded isolated singularity in the following cases.

\begin{thm} For the following groups $G$ acting on the following $\kk$-algebras $A$, $A^G$ is a graded isolated singularity:
\begin{enumerate} 
\item $\grp{(1~2)(3~4),(1~3)(2~4)}$ acting on $V_4$ (Proposition \ref{prop.rob}),
\item $\grp{(1~2)(3~4) \cdots(2n-1~2n)}$ acting on $V_{2n}$ (Proposition \ref{prop.frank}),
\item $\grp{(1~2~\cdots~2^n)}$ acting on $V_{2^n}$ (\cite[Theorem 5.7]{BHZ1}),
\item $\grp{(1~2~3)}$ acting on a generic Sklyanin algebra $S(a,b,c)$ (Theorem \ref{thm.sklyanin}), and
\item any subgroup of $\grp{-I_4, (1~3)(2~4)}$ acting on $V_4$ (Theorem \ref{thm.weighted}).
\end{enumerate}
\end{thm}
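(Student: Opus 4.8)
The plan is to reduce every assertion to a finite-dimensionality statement about a skew group algebra and then carry out explicit computations in the cases not already in the literature. In each of the five cases $A$ is a noetherian AS regular algebra of GK dimension $\geq 2$, and the Auslander map $\gamma_{A,G}$ is an isomorphism by Theorem~\ref{thm.allaus}: cases (1)--(3) concern subgroups of a symmetric group acting on a $(-1)$-skew polynomial ring and so fall under Theorem~\ref{thm.allaus}(1), case (4) is Theorem~\ref{thm.allaus}(5), and case (5) is Theorem~\ref{thm.allaus}(6). Consequently \cite[Theorem~3.10]{MU} applies: $A^G$ is a graded isolated singularity if and only if $A\#G/(f_G)$ is finite-dimensional. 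Since $A\#G$ is affine (generated by $A_1$ together with $\kk G$) and $\NN$-graded with finite-dimensional graded pieces, and $f_G$ is homogeneous of degree $0$, the quotient $A\#G/(f_G)$ is affine and graded; hence it is finite-dimensional if and only if $\GKdim\bigl(A\#G/(f_G)\bigr)=0$, equivalently $\p(A,G)=\GKdim A$. So the theorem amounts to showing that the pertinency attains its maximal possible value in each case.

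For the permutation actions on $V_m$ in cases (1) and (2), I would exploit the $(-1)$-skew PBW basis $\{x_1^{a_1}\cdots x_m^{a_m}\}$ of $V_m$ together with the fact that any two of these monomials commute up to a sign. It follows that if $x_i^2\#1 = 0$ in $A\#G/(f_G)$ for every $i$, then every monomial with some exponent $\geq 2$ already lies in $(f_G)$, so $A\#G/(f_G)$ is spanned by the finitely many classes $x^a\#g$ with all $a_i\leq 1$; in particular it is finite-dimensional. To see that $x_i^2\#1=0$, begin with $x_i f_G - f_G x_i = \sum_{g\in G}(x_i - x_{g(i)})\#g \in (f_G)$. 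When $G=\grp{(1~2)(3~4)\cdots(2n-1~2n)}$ only the term with $g=\tau$ survives, giving $(x_i - x_{\tau(i)})\#\tau\in(f_G)$ at once; when $G=\grp{(1~2)(3~4),(1~3)(2~4)}$ one isolates the individual summands $(x_i - x_{g(i)})\#g$ by a short linear-algebra manipulation, left-multiplying the displayed element by the various $1\#h$ and taking combinations. Right-multiplying $(x_i - x_{g(i)})\#g$ by $1\#g^{-1}$ yields $(x_i - x_{g(i)})\#1\in(f_G)$, i.e.\ $x_i\equiv x_{g(i)}$ in $A\#G/(f_G)$; choosing $g$ with $g(i)=j\neq i$ and invoking $x_ix_j=-x_jx_i$ gives $2x_i^2\#1\equiv 0$, hence $x_i^2\#1=0$ since $\operatorname{char}\kk=0$. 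These computations appear in Propositions~\ref{prop.rob} and~\ref{prop.frank}; the corresponding statement for $\grp{(1~2~\cdots~2^n)}$ acting on $V_{2^n}$ in case (3) is \cite[Theorem~5.7]{BHZ1}.

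Cases (4) and (5) are established as part of Theorems~\ref{thm.sklyanin} and~\ref{thm.weighted}. For the weighted $V_4$-action the strategy is the same as above, the only new feature being the extra scalars introduced by $-I_4$, which complicate but do not obstruct the bookkeeping. For the cyclic action on a generic Sklyanin algebra $S(a,b,c)$ the $(-1)$-monomial commutation used above is no longer available; instead one passes to the quotient of $S(a,b,c)$ by its canonical degree-$3$ central element, a twisted homogeneous coordinate ring of the associated elliptic curve, and reduces the finite-dimensionality of $A\#G/(f_G)$ to a dimension count in that quotient.

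The step I expect to be the main obstacle is exactly this last one: showing $\p(A,G)=\GKdim A$, as opposed to merely $\p(A,G)\geq 2$ (which already suffices for the Auslander map via Theorem~\ref{thm.bhz}), leaves no room for inefficiency, so the finitely many elements of $(f_G)$ one writes down must be chosen so that their products annihilate \emph{all} sufficiently long monomials, or \emph{all} sufficiently high-degree elements of the relevant coordinate ring. For $V_m$ the sign-commutation makes this essentially mechanical, but for the Sklyanin algebra there is no such monomial normal form, so the argument must route through the geometry of the elliptic curve, and verifying that the relevant quotient there has GK dimension $0$ is the technical heart of Theorem~\ref{thm.sklyanin}.
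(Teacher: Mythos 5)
Your overall reduction (Auslander map an isomorphism plus the Mori--Ueyama criterion, so that each assertion amounts to $\dim_\kk A\#G/(f_G)<\infty$, i.e.\ $\p(A,G)=\GKdim A$) is exactly the paper's framework, and your computation for case (2) coincides with Proposition \ref{prop.frank}. However, your argument for case (1) has a genuine gap. For $G=\grp{(1~2)(3~4),(1~3)(2~4)}$ you propose to isolate the individual summands $(x_i-x_{g(i)})\#g$ from $x_if_G-f_Gx_i=\sum_{g}(x_i-x_{g(i)})\#g$ by left-multiplying by the various $1\#h$ and taking combinations, then to conclude $(x_i-x_j)\#e\in(f_G)$ and hence $x_i^2\#e\in(f_G)$. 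This cannot work. Since $f_G$ is homogeneous of degree $0$ and $(1\#h)f_G=f_G(1\#h)=f_G$, the degree-one component of the graded ideal $(f_G)$ is exactly $\operatorname{span}\{x_1f_G,\dots,x_4f_G,f_Gx_1,\dots,f_Gx_4\}$; left-multiplying $x_if_G-f_Gx_i$ by $1\#h$ only yields $x_{h(i)}f_G-f_Gx_i$, which stays in this span. A direct check of this at most eight-dimensional space for the Klein four group shows it contains no nonzero element supported on a single group component: writing $\sum_i c_ix_if_G+\sum_j d_jf_Gx_j$ and forcing the components at just two of the three involutions to vanish already forces all $c_i$ equal and all $d_j=-c_i$, after which every component (including the one you want to keep) vanishes. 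So neither $(x_i-x_j)\#g$ nor $(x_i-x_j)\#e$ lies in $(f_G)$, and your chain producing $x_i^2\#e$ collapses. This is precisely why Proposition \ref{prop.rob} works much harder: it produces $x_1^2+x_2^2+x_3^2+x_4^2$, then $(x_i-x_j)(x_i^2+x_j^2)$, then $x_i^4-x_j^4$ and $x_i^2x_j^2+x_k^4$, and finally $x_i^4\in(f_G)\cap A$ --- it never obtains $x_i^2$, and you supply no substitute argument.

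Secondarily, your descriptions of cases (4) and (5) do not match what is actually proved, and you give no independent argument for them. For (4), Theorem \ref{thm.sklyanin} does not pass to the twisted homogeneous coordinate ring of the elliptic curve: it diagonalizes the $C_3$-action via $X,Y,Z$, shows $X^2\#e,Y^2\#e\in(f)$, and runs an explicit diamond-lemma/Gr\"obner computation exhibiting a ten-element spanning set of the quotient; your proposed geometric route is left entirely unsubstantiated (you acknowledge it is the technical heart). For (5), ``the same strategy'' of killing $x_i^2$ is again not what Theorem \ref{thm.weighted} establishes (it obtains $x_i^3$ together with mixed quadratic and cubic elements), and the presence of $-I_4$ makes the component-isolation issue above reappear. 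As written, (2) and (3) are fine, but (1) is not proved and (4), (5) are only proved insofar as you cite the paper's own theorems.
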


\section{\texorpdfstring{$(-1)$}{(-1)}-skew polynomials and related algebras}
\label{sec.elts}

In this section, we prove that if $G$ is a finite subgroup of $\cS_n$ acting as permutations on the $(-1)$-skew polynomial ring $V_n$, then the pertinency $\p(V_n,G)$ is bounded below by $2$, thus proving that, in this case, the Auslander map is an isomorphism by Theorem \ref{thm.bhz}.
We also consider related algebras including 
the Sklyanin algebra $S(1,1,-1)$ and 
the $(-1)$-quantum Weyl algebras $W_n$. The idea for the computation that follows was inspired by
an argument of Brown and 
Lorenz \cite[Lemma 2.2]{BL} for a commutative algebra $A$.

Let $A$ be a quotient of the free algebra $\kk \langle x_1, \ldots, x_n \rangle$ 
such that $\cS_n$ acts on $A$. 
Denote by $C(A)$ the center of $A$. 
We assume $x_\ell^2 \in C(A)$ for $1 \leq \ell \leq n$, which
clearly holds for $V_n$ and $W_n$, and
is well-known for $S(1,1,-1)$ (see \cite[Section 8.2]{WWY}).

%

Let $G$ be a subgroup of $\cS_n$ and set $f = \sum_{\sigma \in G} 1 \# \sigma$. 
For $i < j$, we define
\[f_{i,j} = (x_i - x_j) \prod_{\substack{ (a,b) \neq (i,j)	\\ a < b}} (x_a^2 - x_b^2).\]

\begin{lemma} 
\label{lem.elts}
The elements $f_{i,j} \in (f) \cap A$ for $1 \leq i < j \leq n$.
\end{lemma}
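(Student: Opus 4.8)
The plan is to show that each $f_{i,j}$ lies in $(f) \cap A$ by exhibiting it as the image under the left multiplication $A\#G \to A\#G$, $z \mapsto z \cdot f$, of a suitable element, followed by a "folding" argument that collapses the sum $f = \sum_{\sigma \in G} 1\#\sigma$ onto a single copy of $A$. More precisely, I would consider an element of the form $h \cdot f \cdot h'$ where $h, h' \in A$ are chosen so that the only $\sigma \in G$ surviving (after applying $\sigma$ and summing) contribute the product $(x_i - x_j)\prod_{(a,b)\neq(i,j)} (x_a^2 - x_b^2)$, and all other terms cancel. The key structural input is the hypothesis $x_\ell^2 \in C(A)$ for all $\ell$: this means the symmetric functions in the $x_\ell^2$ are central and, more importantly, invariant under the $\cS_n$-action (since $G$ permutes the $x_\ell$, it permutes the $x_\ell^2$, fixing any symmetric polynomial in them). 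So expressions built out of the $x_a^2 - x_b^2$ interact predictably with the group elements.

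The main steps I would carry out, in order: (1) Reduce to a generating transposition-type computation. It suffices to produce, for the transposition $\tau = (i\;j)$, an element witnessing $f_{i,j} \in (f)$ for $G = \langle \tau \rangle$, and then invoke Theorem~\ref{thm.subgrp}-style reasoning—actually, more carefully, one must argue directly for general $G$ since $(f_G)$ depends on $G$. (2) For general $G$, write $f = \sum_{\sigma} 1\#\sigma$ and compute $(x_i - x_j) \cdot f$ or a two-sided product. Observe that since $x_i^2 - x_j^2 = (x_i - x_j)(x_i + x_j)$ up to sign corrections coming from the relations (in $V_n$, $x_ix_j = -x_jx_i$, so $(x_i-x_j)(x_i+x_j) = x_i^2 + x_ix_j - x_jx_i - x_j^2 = x_i^2 - x_j^2 - 2x_jx_i$—so one must be slightly careful and perhaps use $(x_i - x_j)(x_i - x_j) = x_i^2 + x_j^2$ instead, again using anticommutativity). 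The idea is that the polynomial $p = \prod_{(a,b)\neq(i,j),\, a<b}(x_a^2 - x_b^2)$ is almost $G$-invariant: applying $\sigma \in G$ permutes the factors $x_a^2 - x_b^2$ and introduces signs, and for most $\sigma$ the image $\sigma(p)$ differs from $p$. (3) The crucial cancellation: multiply $f$ on appropriate sides by elements so that $\sum_\sigma (\text{stuff})\,\sigma(\text{stuff})$ telescopes. Following the Brown–Lorenz strategy cited, I expect the right move is to consider $q \cdot f$ where $q$ involves the "discriminant-like" product $\prod_{a<b}(x_a^2 - x_b^2)$; for $\sigma \neq 1$, $\sigma$ moves some pair, and because $x_a^2 - x_a^2 = 0$ does not occur but $\sigma$ of the full product picks up a sign $\mathrm{sgn}$-type factor, one arranges that $\sum_\sigma 1\#\sigma$ applied to $q$ yields $|G|$ times something or telescopes to $f_{i,j}$ after also hitting it with $(x_i - x_j)$ on the other side.

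The hard part will be step (3): getting the combinatorics of the sign factors and the permutation action on $\prod_{a<b}(x_a^2 - x_b^2)$ to produce exactly $f_{i,j}$ with no leftover terms, while respecting the $(-1)$-commutation relations (so that products like $(x_i - x_j)$ times symmetric-in-squares polynomials behave as expected). I anticipate one needs the identity that $\sigma$ acting on the Vandermonde-type product $\prod_{a<b}(x_a^2 - x_b^2)$ equals $\mathrm{sgn}(\sigma)$ times itself (as it would classically for a Vandermonde), so that $f \cdot \prod_{a<b}(x_a^2-x_b^2)$, suitably interpreted inside $A\#G$, collapses; pairing this with multiplication by $(x_i - x_j)$ on the appropriate side and dividing out the now-trivial $(x_i^2-x_j^2)$ factor (legitimate since we're multiplying, not dividing—so really one starts from $\prod_{(a,b)\neq(i,j)}(x_a^2-x_b^2)$ and supplies the missing $(x_i-x_j)$ factor) yields $f_{i,j} \in (f)$. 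Verifying that $f_{i,j}$ genuinely lies in $A$ (not just $A\#G$) is immediate since it is manifestly an element of $A$ written with no group elements; the content is the membership in $(f)$.
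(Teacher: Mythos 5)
There is a genuine gap: the cancellation mechanism you propose in step (3) would not actually kill any group components. Writing an element of $A\#G$ as $\sum_{\sigma} c_\sigma \#\sigma$, the whole point is to force $c_\sigma = 0$ for every $\sigma \neq e$. But multiplying $f$ on one side by the (full or partial) Vandermonde in the squares gives $f\cdot \prod_{a<b}(x_a^2-x_b^2) = \sum_{\sigma\in G}\sigma\bigl(\prod_{a<b}(x_a^2-x_b^2)\bigr)\#\sigma$, and the sign identity you invoke only says that the $\sigma$-coefficient equals $\operatorname{sgn}(\sigma)\prod_{a<b}(x_a^2-x_b^2)$: every component is still nonzero, so nothing ``collapses'' onto the identity copy of $A$. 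Your opening suggestion of a single sandwich $h\, f\, h' = \sum_{\sigma} h\,\sigma(h')\#\sigma$ fares no better: since $V_n$, $W_n$, and $S(1,1,-1)$ are domains, the coefficients $h\,\sigma(h')$ never vanish for nonzero $h,h'$, so no single product of the forms you describe can lie in $(f)\cap A$, and you supply no concrete mechanism for the ``leftover terms'' to cancel. (Your step (1) reduction to a transposition also cannot help, as you note: Lemma~\ref{lem.subgrp} passes elements of $(f_G)\cap A$ \emph{down} to subgroups, not up.)

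The missing idea --- the actual Brown--Lorenz device, and the one the paper uses --- is an \emph{iterated two-sided difference} rather than one-sided multiplication by a preassembled discriminant. Set $p_0 = f$ and, running over all pairs $(a_k,b_k)$ with $a_k<b_k$ and $(a_k,b_k)\neq(i,j)$, define $p_k = x_{a_k}^2\, p_{k-1} - p_{k-1}\, x_{b_k}^2 \in (f)$. Because the squares are central, each step multiplies the $\sigma$-component by $(x_{a_k}^2 - x_{\sigma(b_k)}^2)$, which is literally zero whenever $\sigma(b_k) = a_k$; after all steps the only components that can survive are $e$ and those $\sigma$ with $\sigma(j) = i$ (in particular $(i~j)$). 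One final difference $x_i p_m - p_m x_j$ then annihilates the latter, since their coefficients are central and $x_i - x_{\sigma(j)} = 0$, while turning the identity coefficient into $(x_i - x_j)\prod_{(a,b)\neq(i,j),\,a<b}(x_a^2-x_b^2) = f_{i,j}$, so $f_{i,j}\#e \in (f)$. Note also that your worry about the $(-1)$-commutation signs is beside the point: all intermediate coefficients are polynomials in the central $x_\ell^2$, and the single non-central factor $(x_i-x_j)$ only appears at the very last step.
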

\begin{proof} 
Let $U = \{ (a,b) \mid 1 \leq a < b \leq n, (a,b) \neq (i,j)\}$ and let $m = |U|$. Enumerate the elements of $U$:
$\{ (a_1, b_1),\ldots,(a_m, b_m)\}$.
Set $p_0 = f$. For $1 < k \leq m$, we define $p_k$ recursively. 
Let $p_{k} = x_{a_k}^2 p_{k-1} - p_{k-1}x_{b_k}^2$.
Since $p_0 = f$, it is clear that $p_k \in I$ for all $k$. 
Because the $x_\ell^2$ are in $C(A)$ by hypothesis, a direct computation shows
\[p_m = \sum_{\sigma \in G}\left( (x_{a_1}^2-x_{\sigma(b_1)}^2) \cdots (x_{a_m}^2-x_{\tau(b_m)}^2) \# \sigma \right).\]
Further, since $U$ was taken over all $a < b$ except $(a,b) = (i,j)$, if $\sigma \neq e$ and $\sigma \neq (i~j)$, 
then there exists some $a_k, b_k$ such that $\sigma(b_k) = a_k$. 
Hence, $p_m$ vanishes on all components except for the identity 
and possibly the transposition $(i~j)$, if $(i~j) \in G$. 
We now observe that $x_i p_m - p_m x_j = f_{i,j} \in (f)$, completing the proof.
\end{proof}

We define the Vandermonde determinant on the elements $y_1,\dots,y_n$ in $T = \kk[y_1,\dots,y_n]$
by $\VdM(T) = \prod_{i < j} (y_i - y_j)$.  If $A=V_n$ or $A=S(1,1,-1)$ (with $n=3$), we set $y_i = x_i^2 \in C(A)$ and $T \subset C(A)$.
By Lemma \ref{lem.elts}, $f_{1,2} \in (f) \cap A$.
It follows that $\VdM(T) = x_1f_{1,2}+f_{1,2}x_2 \in (f) \cap A$.
Moreover, for $1 \leq i<j \leq n$
we define $\hf_{i,j} = \frac{1}{2}(x_i f_{i,j}+f_{i,j}x_i) \in (f) \cap A$. Then
\begin{align}\label{eq.hij}
\hf_{i,j} = 
\begin{cases}
\displaystyle	y_i \prod_{\substack{(a,b) \neq (i,j)\\ a < b}} (y_a - y_b) & \text{ if } A = V_n \\
\displaystyle	(2y_i-y_j) \prod_{\substack{(a,b) \neq (i,j)\\ a < b}} (y_a - y_b) & \text{ if } A = S(1,1,-1).
\end{cases}
\end{align}

Let $J$ be the ideal generated by the 
$\binom{n}{2} + 1$ elements $\hf_{i,j}$ of degree $\binom{n}{2}$.

\begin{proposition}
\label{prop.dim}
The GK dimension of the algebra $T/J$ is at most $n - 2$.
\end{proposition}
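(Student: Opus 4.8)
The plan is to work entirely inside the commutative polynomial ring $T = \kk[y_1,\dots,y_n]$ and show that the variety $V(J) \subseteq \mathbb{A}^n$ defined by the $\binom{n}{2}+1$ generators $\hf_{i,j}$ has dimension at most $n-2$; then $\GKdim(T/J) = \dim V(J) \leq n-2$. Each generator $\hf_{i,j}$ factors as $\ell_{i,j} \cdot \VdM_{i,j}$, where $\VdM_{i,j} = \prod_{(a,b)\neq(i,j),\ a<b}(y_a-y_b)$ is the Vandermonde product with the single factor $(y_i-y_j)$ deleted, and $\ell_{i,j}$ is the linear form $y_i$ (in the $V_n$ case) or $2y_i - y_j$ (in the $S(1,1,-1)$ case). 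So a point $P = (c_1,\dots,c_n)$ lies in $V(J)$ if and only if for every pair $i<j$, either $\ell_{i,j}(P) = 0$ or some $c_a = c_b$ with $(a,b)\neq(i,j)$.

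The key combinatorial step is a case analysis on how many of the coordinates of $P$ coincide. First I would dispose of the case where $P$ has at most one "repeated value," i.e. there is at most one pair $\{a,b\}$ with $c_a = c_b$ and otherwise all coordinates are distinct. If $P$ has all coordinates distinct, then $\VdM_{i,j}(P)\neq 0$ for every $(i,j)$, so we need $\ell_{i,j}(P)=0$ for all $i<j$; but in the $V_n$ case this forces $c_i = 0$ for all $i$ in the support of some pair, hence all $c_i=0$, contradicting distinctness (for $n\geq 2$); a similar contradiction arises in the $S(1,1,-1)$ case. If $P$ has exactly one coincident pair $\{i_0,j_0\}$ (all other coordinates distinct and distinct from the common value), then $\VdM_{i,j}(P) = 0$ only for $(i,j)=(i_0,j_0)$; for every other pair $(i,j)$ the Vandermonde factor is nonzero, forcing $\ell_{i,j}(P)=0$, and as before this propagates to force all coordinates equal, again a contradiction. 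Hence every point of $V(J)$ has at least two (unordered) pairs of equal coordinates — equivalently, $P$ lies in one of the subvarieties cut out by requiring two independent linear equations of the form $y_a - y_b = 0$. The locus where at least two such independent "diagonal" conditions hold is a finite union of linear subspaces each of codimension $\geq 2$, hence of dimension $\leq n-2$. Since $V(J)$ is contained in this locus, $\dim V(J) \leq n-2$, giving the claim.

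The step I expect to be the main obstacle is making the phrase "two independent diagonal conditions" precise and checking it genuinely drops the dimension by two rather than one. The subtlety is that the diagonal hyperplanes $\{y_a = y_b\}$ are not in general position: for instance $\{y_1=y_2\}$, $\{y_2=y_3\}$, $\{y_1=y_3\}$ pairwise intersect in the same codimension-$2$ subspace. So I would phrase the conclusion as: every $P\in V(J)$ has a partition of $\{1,\dots,n\}$ into blocks (the level sets of $c\mapsto c_i$) with at least two blocks of size $\geq 2$, or one block of size $\geq 3$; in either case the number of distinct coordinate values is at most $n-2$, so $P$ lies in a linear subspace of dimension $\leq n-2$, and $V(J)$ is the union of finitely many such subspaces. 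This is clean once the "at most one coincident pair" cases above are eliminated, and those eliminations are exactly the place where the explicit linear factors $y_i$ resp. $2y_i - y_j$ from \eqref{eq.hij} are used — the argument would fail for a Vandermonde product alone, so it is essential that each $\hf_{i,j}$ carries its extra linear factor.
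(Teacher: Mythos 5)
Your reduction to bounding $\dim V(J)$ is a reasonable strategy, but the key combinatorial step contains a genuine error, and the intermediate claim you extract from it is false. In the case of a point $P=(c_1,\dots,c_n)$ with exactly one coincident pair $\{i_0,j_0\}$, you assert that $\VdM_{i,j}(P)=0$ only for $(i,j)=(i_0,j_0)$; it is exactly the other way around. The deleted-pair product $\VdM_{i,j}$ contains the factor $(y_{i_0}-y_{j_0})$ for every $(i,j)\neq(i_0,j_0)$, so all of those factors vanish at $P$, and the only constraint that membership in $V(J)$ imposes is $\ell_{i_0,j_0}(P)=0$, i.e. $c_{i_0}=c_{j_0}=0$ (in both the $V_n$ case and the $S(1,1,-1)$ case, since $c_{i_0}=c_{j_0}$ makes $2c_{i_0}-c_{j_0}=c_{i_0}$). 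Concretely, $(0,0,1)\in V(J)$ for $n=3$, and more generally $(0,0,c_3,\dots,c_n)$ with the $c_i$ distinct and nonzero lies in $V(J)$: every $\hf_{i,j}$ with $(i,j)\neq(1,2)$ vanishes because of the factor $y_1-y_2$, and $\hf_{1,2}$ vanishes because of its linear factor. Such points have $n-1$ distinct coordinate values, so your conclusion that every point of $V(J)$ has at most $n-2$ distinct values is wrong, and the "propagates to force all coordinates equal, again a contradiction" step does not happen.

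The damage is repairable, because in that exceptional case the point still lies in the codimension-two linear subspace $\{y_{i_0}=y_{j_0}=0\}$: the correct dichotomy is that every point of $V(J)$ either has at most $n-2$ distinct coordinate values or has a single coincident pair whose common value is forced to be $0$, and in either case it lies in one of finitely many linear subspaces of codimension at least $2$, giving $\dim V(J)\leq n-2$. (You should also note that the case $n=2$ and the all-distinct case use that $\VdM(T)$ itself belongs to $J$.) So the variety-theoretic route can be made to work, but as written the proof is not correct. For comparison, the paper avoids the case analysis entirely: it shows that $\hf=\sum_{i<j}\hf_{i,j}$ and $\VdM(T)$ are relatively prime in $T$, hence form a regular sequence, so $J$ has grade at least $2$ and $\GKdim T/J\leq n-2$; the coprimality is checked by reducing modulo each hyperplane $y_a=y_b$, where the image of $\hf$ equals the image of the single surviving term $\hf_{a,b}$, which is nonzero.
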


\begin{proof}
Set $\hf = \sum_{1 \leq i < j \leq n} \hf_{i,j}$.
We will show that $\hf$ and $\VdM(T)$ are relatively prime, 
and hence they form a regular sequence. 
It follows that the grade of $J$ (that is,
the length of the longest regular sequence in $J$) is at least two,
and hence the dimension of $T/J$ is at most $n - 2$ by the depth inequality.

Let $1 \leq a < b \leq n$.  Consider the image of $\hf$ in 
$T_{a,b} = k[y_1,\dots,y_n]/( y_a - y_b )$.
Since each $\hf_{i,j}$ for $(i,j) \neq (a,b)$ has $y_a - y_b$ as a factor, the image of $\hf$
and $\hf_{a,b}$ in $T_{a,b}$ agree.  Since $T_{ab}$ is a domain and the image of all
the irreducible factors of $\hf_{a,b}$ are nonzero, the image of $\hf_{a,b}$ is nonzero as well.
Therefore $\hf$ does not have $y_a - y_b$ as a factor for any such $a,b$, 
and hence $\hf$ and $\VdM(T)$ are relatively prime.
As $T/J$ is commutative, its GK dimension is equal to its Krull dimension, so $\GKdim T/J \leq n-2$.
\end{proof}

\begin{theorem}
\label{thm.main}
The Auslander map $\gamma_{A,G} : A \# G  \to \End_{A^G}(A)$ is an isomorphism for $G$ a subgroup of $\cS_n$ acting on $A=V_n$ or $W_n$, and for $G$ a subgroup of $\cS_3$ acting on $S(1,1,-1)$.
\end{theorem}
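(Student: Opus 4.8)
The plan is to establish, in each of the three cases, that the pertinency $\p(A,G)$ is at least $2$, and then invoke Theorem \ref{thm.bhz}. The strategy, following the filtration/degeneration idea mentioned before Lemma \ref{lem.elts}, is to replace the quotient $A\#G/(f)$ by a commutative quotient whose GK dimension we can bound, and to exploit the fact that $\GKdim A = n$ for $V_n$ and $W_n$ (and $\GKdim S(1,1,-1) = 3$).

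For $A = V_n$ (and $A = S(1,1,-1)$ with $n = 3$): Proposition \ref{prop.dim} shows that $\GKdim(T/J) \leq n - 2$, where $T = \kk[y_1,\dots,y_n]$ with $y_i = x_i^2$ and $J = (\hf_{i,j})$. Since by Lemma \ref{lem.elts} and the subsequent discussion each $\hf_{i,j} \in (f)\cap A$, there is a surjection of the relevant graded pieces so that $A\#G/(f)$ is, up to a finite module extension, governed by a quotient of $T/J$. More precisely I would argue that $A$ is finite over the central subalgebra $\kk[x_1^2,\dots,x_n^2] \supseteq T$, hence $A\#G/(f)$ is finite over $T/(J\cap T)$ — and since the image of $J$ in $T$ contains $J$, we get $\GKdim\bigl(A\#G/(f)\bigr) \leq \GKdim(T/J) \leq n-2$. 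Therefore $\p(A,V_n\text{-action}) = \GKdim A - \GKdim(A\#G/(f)) \geq n - (n-2) = 2$, and Theorem \ref{thm.bhz} applies. The same computation with $y_i = x_i^2$ works for $S(1,1,-1)$ since $x_\ell^2$ is central there as well, giving $\p \geq 3 - 1 = 2$.

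For $A = W_n$: here one uses the standard filtration with $\gr(W_n) = V_n$. The point is that the pertinency and GK dimension behave well under passing to the associated graded ring: $\GKdim W_n = \GKdim V_n = n$, and a filtered version of $f_G$ degenerates to $f_G$ in $V_n$, so that $\gr\bigl(W_n\#G/(f)\bigr)$ is a quotient of $V_n\#G/(f)$, giving $\GKdim(W_n\#G/(f)) \leq \GKdim(V_n\#G/(f)) \leq n-2$. Hence $\p(W_n,G) \geq 2$, and since $W_n$ is also noetherian AS regular CM of GK dimension $n \geq 2$ (its associated graded $V_n$ is), Theorem \ref{thm.bhz} gives the isomorphism.

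The main obstacle I anticipate is bookkeeping the reduction from the noncommutative quotient $A\#G/(f)$ to the commutative quotient $T/J$ cleanly — specifically, verifying that finiteness of $A$ over the central polynomial subring survives after quotienting by the ideal generated by $f$, and that no GK dimension is lost or gained in the filtered-to-associated-graded step for $W_n$. Neither step is deep, but both require care that the relevant modules are genuinely finitely generated; the inequality $\GKdim(\mathrm{gr}\,B) \geq \GKdim B$ for filtered algebras and the fact that a module finite over a noetherian subring has the same GK dimension as that subring's quotient are the tools I would cite. Once the GK dimension bound $\GKdim(A\#G/(f)) \leq n-2$ is in hand in all three cases, the conclusion is immediate from Theorem \ref{thm.bhz}.
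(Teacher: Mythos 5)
Your treatment of $V_n$ and $S(1,1,-1)$ is essentially the paper's argument: $A$ is module-finite over the central polynomial subring $T$ on the $x_i^2$, the images $\hf_{i,j}$ lie in $(f)\cap T$, so the image $T'$ of $T$ in $(A\#G)/(f)$ is a quotient of $T/J$, and Proposition \ref{prop.dim} gives $\GKdim (A\#G)/(f) \leq n-2$, hence $\p(A,G)\geq 2$ and Theorem \ref{thm.bhz} applies. (The paper packages the bookkeeping you worry about by citing \cite[Lemma 5.2]{BHZ1}, which gives $\GKdim T' = \GKdim A' = \GKdim (A\#G)/(f)$; your module-finiteness argument is the same in substance, modulo the garbled phrase about ``the image of $J$ in $T$ contains $J$'' --- what you need is that the kernel of $T\to T'$ contains $J$, which holds since each $\hf_{i,j}\in (f)\cap T$.)

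The gap is in the $W_n$ case. Your filtration argument correctly bounds $\GKdim (W_n\#G)/(f)$ by $\GKdim (V_n\#G)/(f)$, i.e.\ $\p(W_n,G)\geq \p(V_n,G)$, but the final step --- ``Theorem \ref{thm.bhz} gives the isomorphism'' --- does not go through as stated: $W_n$ is a \emph{filtered}, not connected graded, algebra (the relations $x_ix_j+x_jx_i=1$ are inhomogeneous), and Theorem \ref{thm.bhz} requires a noetherian \emph{connected graded} AS regular CM algebra; AS regularity itself is a notion for connected graded algebras, so ``$W_n$ is AS regular CM because $\gr(W_n)=V_n$ is'' does not typecheck. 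What is needed is a filtered analogue of the pertinency criterion, namely that if the associated graded algebra is AS regular CM and the pertinency of the filtered algebra is at least $2$, then the Auslander map for the filtered algebra is an isomorphism. This is exactly what the paper invokes, citing \cite[Proposition 3.6 and Corollary 3.7]{BHZ2} (Proposition 3.6 is your inequality $\p(W_n,G)\geq \p(V_n,G)$; Corollary 3.7 supplies the Auslander isomorphism in the filtered setting). Without that ingredient, your argument proves the pertinency bound for $W_n$ but not the conclusion of the theorem.
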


\begin{proof}
First, let $A=V_n$ or $A=S(1,1,-1)$. We assume $n=3$ in the second case. 
Then $A$ is finitely generated over the central subalgebra $T$.
Let $A'$ be the image of the map $A \hookrightarrow A\#G \to (A\# G)/(f)$
and $T'$ the image of $T$ in $A'$.
By \cite[Lemma 5.2]{BHZ1}, $\GKdim T' = \GKdim A' = \GKdim (A\#G)/(f)$.
Clearly, $\GKdim T' \leq \GKdim T/J$ and $\GKdim T/J \leq n-2$
by the above argument. Hence
\[ \p(A,G) = \GKdim A - \GKdim (A\#G)/(f) \geq n - (n-2) = 2.\]
Applying \cite[Theorem 0.3]{BHZ1} now completes the proof for $V_n$ and $S(1,1,-1)$.

By \cite[Proposition 3.6 and Corollary 3.7]{BHZ2}, 
$\p(W_n,G) \geq \p(V_n,G)$ and so the Auslander map is an isomorphism for $W_n$ as well.\end{proof}

\section{Pertinency computations}
\label{sec.comp}

In this section we give more precise pertinency computations
for subgroups of $\cS_3$ and $\cS_4$ acting on $V_3$ and $V_4$, respectively.
In addition, we provide techniques for computing bounds on pertinency,
distinct from the method in the previous section, including proving in Theorem \ref{thm.subgrp} that the pertinency of any subgroup is an upper bound on the pertinency of the group.  We also obtain in Theorem \ref{thm.tensor} an inequality that relates the pertinency of the actions of groups $G$ and $H$ (respectively) on algebras $A$ and $B$ (respectively) to the pertinency of $G \times H$ on the twisted tensor product $A \otimes_\tau B$ under certain conditions.

Pertinency values for $n$-cycles acting on $V_n$ were bounded,
and for $n=2^d$ computed exactly,
by Bao, He, and Zhang in a result we recall below.

\begin{theorem}[{\cite[Theorem 5.7]{BHZ1}}]
\label{thm.bhzineq}
Let $G=\grp{(1~2~\cdots~n)}$, $n \geq 2$ and let  $\phi(n)$ be Euler's phi function.
If $n=2^d$ then $\p(V_n,G)=n$.
In general,
$\p(V_n,G)\geq \frac{\phi(n)}{2}$ when $n$ is even and
$\p(V_n,G)\geq \phi(n)$ when $n$ is odd.
\end{theorem}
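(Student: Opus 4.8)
\textbf{Proof proposal for Theorem \ref{thm.bhzineq}.}

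The plan is to follow the strategy of Bao--He--Zhang closely, working inside the skew group algebra $V_n \# G$ with $G = \grp{\sigma}$, $\sigma = (1~2~\cdots~n)$, and analyzing the quotient $(V_n \# G)/(f)$ where $f = \sum_{k=0}^{n-1} 1 \# \sigma^k$. As in Lemma \ref{lem.elts}, the key is to produce enough explicit elements of $(f) \cap V_n$ so that the quotient ring $V_n / ((f) \cap V_n)$ — which dominates the GK dimension of $(V_n\#G)/(f)$ up to the adjustment in \cite[Lemma 5.2]{BHZ1} — has small GK dimension. First I would recall that the squares $x_\ell^2$ are central in $V_n$ and that $T = \kk[x_1^2, \ldots, x_n^2]$ is a central polynomial subalgebra over which $V_n$ is finite; $\sigma$ acts on $T$ by cyclically permuting the variables $y_\ell = x_\ell^2$. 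So it suffices to bound the GK dimension of the image $T'$ of $T$ in $(V_n\#G)/(f)$, equivalently to exhibit many central elements of $(f)\cap T$.

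The central computation is a ``telescoping'' / Vandermonde-style manipulation parallel to the proof of Lemma \ref{lem.elts}: for suitable choices of central multipliers $y_{c}$ and the recursion $p \mapsto y_a p - p\, y_b$ (which here collapses to $(y_a - y_b)p$ since the $y$'s are central), one pushes $f$ to elements supported only on group elements fixing certain index-subsets. Concretely, for a divisor $d \mid n$ one can produce, inside $(f)\cap T$, elements whose ``symbol'' is a product of linear forms $y_i - y_j$ indexed so that only the powers $\sigma^k$ with $k$ a multiple of $n/d$ survive. Iterating over the divisor lattice of $n$ and using the inclusion-exclusion bookkeeping of primitive residues modulo $n$ is exactly what produces the $\phi(n)$ (odd case) or $\phi(n)/2$ (even case, where the extra factor of $2$ comes from the fact that $\sigma$ and $\sigma^{-1}$ have the same fixed-point pattern on $\{y_i\}$, so one gains from pairing conjugate roots of unity). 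The output is a regular sequence of the appropriate length in $T$, and then the depth/grade inequality as used in Proposition \ref{prop.dim} gives $\GKdim T' \le n - \phi(n)$ (odd) or $n - \phi(n)/2$ (even), whence the pertinency bound. For the sharp statement $\p(V_n,G) = n$ when $n = 2^d$, I would additionally establish the matching upper bound $\p(V_n,G)\le n$, which holds in general since $\GKdim(V_n\#G)/(f) \ge 0$, and then observe that $\phi(2^d)/2 = 2^{d-1} \ne 2^d$, so the general lower bound is \emph{not} enough: when $n = 2^d$ one needs a better family of elements, exploiting that every nontrivial power $\sigma^k$ has order a power of $2$ and hence (since $n$ is a $2$-power) moves \emph{every} index, so a single well-chosen product of $(y_i - y_j)$'s over an appropriate pairing kills all of $\sigma, \sigma^2, \ldots, \sigma^{n-1}$ at once and forces $\GKdim T' = 0$, i.e.\ $(V_n\#G)/(f)$ finite-dimensional.

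I expect the main obstacle to be the combinatorial bookkeeping in the general (non-$2$-power) case: one must choose, for each proper divisor $d$ of $n$, the right set of differences $y_i - y_j$ to multiply against and verify that after the telescoping only the intended powers of $\sigma$ survive, and then count carefully — via Möbius inversion over divisors of $n$ — how many independent relations in $T$ this yields, matching $\phi(n)$ or $\phi(n)/2$ rather than something weaker. A secondary technical point is checking that the elements produced genuinely lie in $T$ (not just in $V_n$) and are honestly central, which is where the hypothesis $x_\ell^2 \in C(V_n)$ and the symmetrization trick $\hf_{i,j} = \tfrac12(x_i f_{i,j} + f_{i,j} x_i)$ from \eqref{eq.hij} do the work; and that the resulting sequence in $T$ is regular, which follows as in Proposition \ref{prop.dim} from the fact that distinct linear forms $y_i - y_j$ are non-associate primes in the polynomial ring $T$. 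Once these are in hand, the passage from $\GKdim T'$ to $\p(V_n,G)$ is immediate from \cite[Lemma 5.2]{BHZ1} and the definition of pertinency, exactly as in the proof of Theorem \ref{thm.main}.
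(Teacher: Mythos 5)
You should first note that the paper itself offers no proof of Theorem \ref{thm.bhzineq}: it is quoted verbatim from \cite[Theorem 5.7]{BHZ1}. So the only question is whether your sketch would establish the statement, and as written it does not. The central portion is not yet an argument: you never construct the promised elements attached to a divisor $d \mid n$, never verify that the resulting products of linear forms generate an ideal of $T$ of grade $\phi(n)$ (resp.\ $\phi(n)/2$), and the explanation offered for the factor $\tfrac12$ in the even case (``$\sigma$ and $\sigma^{-1}$ have the same fixed-point pattern'') proves nothing. Note also that no nontrivial power of an $n$-cycle fixes any index pointwise, so the proposed bookkeeping (``only the powers $\sigma^k$ with $k$ a multiple of $n/d$ survive,'' organized by fixed index-subsets) does not match how the telescoping works: the $\#\sigma^k$ component dies exactly when you choose the multiplier $y_a - y_{\sigma^k(b)}$ with $a = \sigma^k(b)$, so which components survive is entirely your choice, and the real issue --- how many genuinely independent elements of $(f)\cap T$ this yields and what the codimension of their common zero locus is --- is left untouched (the chosen forms can easily be dependent, e.g.\ $y_1-y_2$, $y_2-y_3$, $y_1-y_3$, so ``one product per primitive residue'' does not by itself give grade $\phi(n)$). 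Incidentally, the parenthetical claim that $y_a p - p y_b$ ``collapses to $(y_a-y_b)p$ since the $y$'s are central'' is false in $A\#G$: the right multiplication twists $y_b$ by the group components, which is precisely why the telescoping produces different factors on different components.

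The case $n=2^d$ is where the approach genuinely breaks. Every element of $(f)\cap T$ that your recipe produces is a (monomial multiple of a) product containing factors $y_a-y_b$, and every such element vanishes on the diagonal line $y_1=\cdots=y_n$; hence the ideal of $T$ they generate always has quotient of Krull dimension at least $1$, so one can never conclude $\GKdim T'=0$, i.e.\ never reach $\p(V_n,G)=n$, by central Vandermonde-type elements alone. Compare Propositions \ref{prop.rob} and \ref{prop.frank} and Theorem \ref{thm.weighted}: finite-dimensionality of $A/((f)\cap A)$ is obtained there precisely by producing pure powers such as $x_i^4$ or $x_i^3$ in $(f)\cap A$, which requires multiplying by the non-central generators $x_i$, not only by the central $y_i$. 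Moreover, the feature you single out for $2$-powers --- that every nontrivial power of $\sigma$ moves every index --- holds for any $n$-cycle whatsoever, so it cannot be what distinguishes $n=2^d$. The actual proof in \cite{BHZ1} works in the eigenbasis $b_j=\sum_i \xi^{ij}x_i$, shows that suitable powers and products of the $b_j$ with $\gcd(j,n)=1$ lie in $(f)$, and then applies a GK-dimension lemma; the $\phi(n)$ versus $\phi(n)/2$ dichotomy and the exact value for $n=2^d$ come out of that eigenvector analysis, not out of a computation in the central subring $T$. To salvage your outline you would need either to import that eigenvector argument or to produce, by explicitly using the $x_i$'s, elements of $(f)\cap A$ that do not vanish on the diagonal, together with an honest count of the grade of the resulting ideal.
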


\begin{remark}
\label{rmk.ore}
Let $n \geq 2$ and suppose $\sigma$ is a $k$-cycle acting on $V_n$. Let $H =\grp{\sigma}$.
If $k=n$, then $\p(V_n,H)$ is as in Theorem \ref{thm.bhzineq}.
However, if $k < n$, then by reindexing we may assume $\sigma = (1~2~\cdots ~k)$.
We recognize $V_n$ as an Ore extension
of $V_k$ with $H$ acting trivially on $x_{k+1},\hdots,x_n$.
By \cite[Lemma 6.4]{BHZ2}, $\p(V_n,H)$ is still as above
but with the $n$ on the right hand side replaced by $k$.
We use this result frequently in what follows without mention.
\end{remark}

Let $G$ be a finite subgroup of $\Aut_{\gr}(A)$.
For a finite-order element $g \in G$,
the {\sf reflection number of $g$}, denoted $\r(A,g)$, is defined as
$$ \r(A,g) = \GKdim A - k, \text{ where }  \Tr_A(g,t) = \frac{1}{(1-t)^k q(t)} \text{ and } q(1)\neq 0.$$
The {\sf reflection number} of the $G$-action on $A$ is defined to be
$$\r(A,G) = \min\{ \r(A,g) : e \neq g \in G\}.$$
Bao, He, and Zhang conjecture that $\p(A,G) \geq \r(A,G)$ \cite[Conjecture 0.9]{BHZ1}; if true this conjecture would imply that the Auslander map is an isomorphism for groups that contain no reflections.
For all of the permutation actions considered in this paper, we show that the conjectured inequality holds.

Next we will bound the pertinency of the group action above by the pertinency of the action of a subgroup. 

\begin{lemma}
\label{lem.subgrp}
Suppose $G$ is a group acting on an algebra $A$ and $H \leq G$ is a subgroup. Let 
$f_H = \sum_{\sigma \in H} 1 \# \sigma$ and $f_G = \sum_{\sigma \in G} 1 \# \sigma$.
If $\alpha \# e \in (f_G) \cap A \subseteq A \# G$, 
then $\alpha \# e \in (f_H) \cap A \subseteq A \# H$.
\end{lemma}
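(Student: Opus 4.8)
The plan is to exploit the natural inclusion $A \# H \hookrightarrow A \# G$ together with the averaging/projection map that collapses $A \# G$ onto the "$H$-part." First I would observe that as a left $A\#H$-module, $A \# G$ is free: choosing coset representatives $g_1 = e, g_2, \ldots, g_r$ for $H \backslash G$ (right cosets, so $G = \bigsqcup_i H g_i$), we get $A \# G = \bigoplus_{i=1}^r (A \# H)(1 \# g_i)$. The key structural fact is that $f_G = \sum_{\sigma \in G} 1 \# \sigma = \sum_{i=1}^r \bigl(\sum_{\tau \in H} 1 \# \tau g_i\bigr) = \sum_{i=1}^r f_H (1 \# g_i) = f_H \cdot \bigl(\sum_i 1 \# g_i\bigr)$. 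In particular $f_G \in (f_H)$ as a left ideal, but more importantly $f_G$ lies in the left $A\#H$-submodule $(A\#H) f_H \cdot \{1 \# g_i\}$ in a controlled way.

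Next I would set up the $A\#H$-bimodule projection. Define $\pi \colon A \# G \to A \# H$ to be the $\kk$-linear map that is the identity on each summand $(A \# H)(1 \# g_i)$ with $g_i \in H$ (i.e.\ $g_1 = e$) and kills the summands with $g_i \notin H$; concretely $\pi\bigl(\sum_{g \in G} a_g \# g\bigr) = \sum_{\tau \in H} a_\tau \# \tau$. One checks directly that $\pi$ is a homomorphism of $A\#H$-bimodules: for $b \in A$, $\pi((b\#e)\xi) = (b\#e)\pi(\xi)$ and $\pi(\xi(b\#e)) = \pi(\xi)(b\#e)$, since multiplication by $A$ on either side permutes the graded (by group element) pieces within a fixed coset only when that coset is $H$ itself — actually left multiplication by $a \# e$ preserves each group-component, and right multiplication by $a \# e$ sends the $g$-component into the $g$-component as well (because $(a'\#g)(b\#e) = a' g(b) \# g$), so $\pi$ commutes with both actions. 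The crucial computation is $\pi(f_G) = \pi\bigl(\sum_{g \in G} 1 \# g\bigr) = \sum_{\tau \in H} 1 \# \tau = f_H$.

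Now suppose $\alpha \# e \in (f_G) \cap A$, so $\alpha \# e = \sum_{k} u_k f_G v_k$ for some $u_k, v_k \in A \# G$. Apply $\pi$: since $\alpha \# e$ already lies in $A \# H$, we get $\alpha \# e = \pi(\alpha \# e) = \sum_k \pi(u_k f_G v_k)$. The remaining step — and the one requiring the most care — is to rewrite each $\pi(u_k f_G v_k)$ as an element of the two-sided ideal $(f_H) \subseteq A \# H$. Here I would use the identity $f_G = f_H \cdot c$ where $c = \sum_{i=1}^r 1 \# g_i$, together with the fact that for any $\xi \in A\#G$ we can write $\xi = \sum_j (\xi_j \# e)(1 \# h_j)$ in terms of $A$-coefficients and group elements, and then $\pi$ of a product factors appropriately through $f_H$. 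More cleanly: $\pi$ restricted to the left $A\#H$-module $(A\#G)f_G$ should be shown to land in $(A\#H)f_H(A\#H)$ — one verifies $\pi(\,(A\#H)(1\#g_i)\, f_H\, (A\#H)(1\#g_j)\,) \subseteq (f_H)$ for every pair $i,j$ by a direct check using $(1\#g_i) f_H = f_H'$ (a permuted copy of $f_H$, still a left-ideal generator times a group unit) and the bimodule property of $\pi$. Assembling these gives $\alpha \# e \in (f_H) \cap A \subseteq A \# H$, as desired. The main obstacle is precisely this last bookkeeping: confirming that conjugating $f_H$ by the coset representatives $1 \# g_i$ stays inside the two-sided ideal $(f_H)$ after applying $\pi$, which amounts to the observation that $(1 \# g_i) f_G (1 \# g_i)^{-1}$ is again $f_G$ (since $f_G$ is $G$-conjugation-invariant) — so working with $f_G$ throughout and only projecting at the end is the path of least resistance.
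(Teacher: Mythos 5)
Your high-level idea is sound: the $\kk$-linear projection $\pi\colon A\#G \to A\#H$ onto the $H$-supported components is indeed an $(A\#H,A\#H)$-bimodule map with $\pi(f_G)=f_H$. But the proof has a genuine gap exactly where you flag ``the most care'' is needed: in $\alpha\#e=\sum_k u_kf_Gv_k$ the elements $u_k,v_k$ lie in $A\#G$, not $A\#H$, so the bimodule property of $\pi$ does not apply to the terms $\pi(u_kf_Gv_k)$ as written, and the sketch you give to bridge this is not correct as stated. For instance, $(1\#g_i)f_H=\sum_{\tau\in H}1\#g_i\tau$ is supported on the coset $g_iH$ and is not a ``permuted copy of $f_H$'' inside $A\#H$; and $G$-conjugation invariance of $f_G$ only lets you slide group units past $f_G$ (e.g.\ $(1\#g_i)f_G(1\#g_j)=f_G(1\#g_ig_j)$), which still leaves a group unit wedged between $f_G$ and the $A\#H$-factor, so $\pi$ cannot be pulled through. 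The identity you actually need is the two-sided absorption $(1\#\sigma)f_G=f_G=f_G(1\#\sigma)$ for all $\sigma\in G$, which is stronger than conjugation invariance and is never invoked in your argument.

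With that identity the argument closes easily, and in fact collapses to the paper's proof: writing $u_k=\sum_i u_{k,i}(1\#g_i)$ and $v_k=\sum_j(1\#g_j)v_{k,j}$ with $u_{k,i},v_{k,j}\in A\#H$ (right and left coset representatives, respectively), absorption gives $u_kf_Gv_k=\sum_{i,j}u_{k,i}f_Gv_{k,j}$, and then $\pi(u_{k,i}f_Gv_{k,j})=u_{k,i}f_Hv_{k,j}\in(f_H)$ by the bimodule property. The paper does this normalization directly: using $(1\#\sigma_i)f_G=f_G=f_G(1\#\sigma_i')$ it reduces to $\alpha\#e=\sum_i(a_i\#e)f_G(b_i\#e)$ with $a_i,b_i\in A$, reads off that $\sum_i a_i\sigma(b_i)=\delta_{\sigma,e}\,\alpha$ for \emph{all} $\sigma\in G$, and then observes that the same $a_i,b_i$ applied to $f_H$ give $\alpha\#e$, since the coefficient condition holds in particular for $\sigma\in H$. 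Your projection $\pi$ is a legitimate packaging of that last step, but without the absorption-based normalization it does not get off the ground.
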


\begin{proof}
Since $\alpha \# e \in (f_G)$, 
there exist $a_i, b_i \in A$ and $\sigma_i, \sigma_i' \in G$, $1 \leq i \leq n$, such that
\[ \alpha \# e = \sum_{i =1}^n (a_i \# \sigma_i)(f_G)(b_i \# \sigma_i').\] 
Since, for all $i$, $(1\#\sigma_i) f_G = f_G = f_G (1\#\sigma_i')$, 
by replacing $b_i$ with $(\sigma_i')^{-1}(b_i)$, we may assume $\sigma_i = \sigma_i' = e$ for all $i$. Therefore,
\[ \alpha \# e = \sum_{i =1}^n (a_i \# e)\left(\sum_{\sigma \in G} 1 \# \sigma\right)(b_i \# e) 
= \sum_{i=1}^n \sum_{\sigma \in G} a_i \sigma(b_i) \# \sigma.\] 
That is, $\sum_{i=1}^n a_i \sigma(b_i) = \alpha$ if $\sigma = e$ and $0$ otherwise. Now consider
\[\sum_{i =1}^n (a_i \# e)(f_H )(b_i \# e)=\sum_{i =1}^n (a_i \# e)\left(\sum_{\sigma \in H} 1 \# \sigma\right)(b_i \# e) = \sum_{i=1}^n \sum_{\sigma \in H} a_i \sigma(b_i) \# \sigma.\]
By the computation above, this is equal to $\alpha \# e$, which proves our claim.
\end{proof}

The next theorem resolves a conjecture of Bao, He, and Zhang in the
group case \cite[Remark 5.6(2)]{BHZ1}.

\begin{theorem}
\label{thm.subgrp}
Let $G$ be a group acting on $A$ and let $H \leq G$ be a subgroup. Then
\[ \p(A,G) \leq \p(A,H).\]
\end{theorem}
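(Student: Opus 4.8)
The plan is to reduce the statement about GK dimensions to a statement about ideals, and then invoke Lemma~\ref{lem.subgrp}. Recall that $\p(A,G) = \GKdim A - \GKdim (A\#G)/(f_G)$, so the inequality $\p(A,G) \le \p(A,H)$ is equivalent to $\GKdim (A\#G)/(f_G) \ge \GKdim (A\#H)/(f_H)$. The natural way to get an inequality of GK dimensions is to produce a surjection (or more generally a map with dense image, or an inclusion that is module-finite in the appropriate direction) between the two algebras, or to compare both against a common subalgebra. Here the cleanest route is to show that $A/\big((f_G)\cap A\big)$ controls $(A\#G)/(f_G)$ in GK dimension — this is precisely the content of \cite[Lemma 5.2]{BHZ1}, which was already used in the proof of Theorem~\ref{thm.main}: the image $A'$ of $A$ in $(A\#G)/(f_G)$ satisfies $\GKdim A' = \GKdim (A\#G)/(f_G)$. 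So I would first record that $\GKdim (A\#G)/(f_G) = \GKdim A/\big((f_G)\cap A\big)$ and likewise $\GKdim (A\#H)/(f_H) = \GKdim A/\big((f_H)\cap A\big)$.

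With that reduction in hand, the theorem follows from the ideal containment $(f_H)\cap A \subseteq (f_G)\cap A$ inside $A$: a containment of ideals gives a surjection of quotient algebras $A/\big((f_H)\cap A\big) \twoheadrightarrow A/\big((f_G)\cap A\big)$, hence $\GKdim A/\big((f_G)\cap A\big) \le \GKdim A/\big((f_H)\cap A\big)$, which is exactly $\p(A,G) \le \p(A,H)$. So the second step is to prove $(f_H)\cap A \subseteq (f_G)\cap A$, and this is precisely Lemma~\ref{lem.subgrp}: if $\alpha\#e \in (f_G)\cap A \subseteq A\#G$ then $\alpha\#e \in (f_H)\cap A \subseteq A\#H$. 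Wait — that lemma gives the containment in the \emph{wrong} direction, $(f_G)\cap A \subseteq (f_H)\cap A$, which would yield the reverse inequality. Let me reconsider: a larger ideal gives a smaller quotient, so $(f_G)\cap A \subseteq (f_H)\cap A$ gives $\GKdim A/\big((f_H)\cap A\big) \le \GKdim A/\big((f_G)\cap A\big)$, i.e. $\p(A,H) \le \p(A,G)$ — still reversed.

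The resolution is that $\GKdim (A\#G)/(f_G)$ is \emph{not} simply $\GKdim A/\big((f_G)\cap A\big)$ in general; rather one must account for the group action, and the correct comparison runs the other way. I would instead argue directly at the level of skew group algebras: there is a natural algebra homomorphism $A\#H \to A\#G$ (inclusion of the group rings), and it is a split inclusion of $A\#H$-bimodules since $\kk H$ splits off $\kk G$ as a $\kk H$-bimodule (characteristic zero). Under this map $f_H$ maps into the ideal generated by $f_G$ — indeed $f_G = \big(\sum_{\text{coset reps } \tau} 1\#\tau\big) f_H$ up to reordering, so $(f_G) \subseteq (f_H)(A\#G)$, and conversely $f_H$ times the averaging idempotent recovers a scalar multiple of $f_G$; the upshot is that $(f_G) \cap (A\#H) \supseteq$ some suitable piece. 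The cleanest formulation: Lemma~\ref{lem.subgrp} shows $(f_G)\cap A \subseteq (f_H)\cap A$, so taking quotients of $A$ we get a surjection $A/\big((f_G)\cap A\big) \twoheadrightarrow A/\big((f_H)\cap A\big)$. Combined with \cite[Lemma 5.2]{BHZ1} applied to \emph{both} $G$ and $H$ — which identifies each $\GKdim (A\#-)/(f_-)$ with the GK dimension of the image of $A$, and these images are exactly the quotients $A/\big((f_-)\cap A\big)$ — this surjection yields $\GKdim (A\#H)/(f_H) \le \GKdim (A\#G)/(f_G)$. Therefore
\[
\p(A,H) = \GKdim A - \GKdim (A\#H)/(f_H) \ge \GKdim A - \GKdim (A\#G)/(f_G) = \p(A,G),
\]
as desired. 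The main obstacle is getting the direction of the inequality right: one must be careful that ``smaller group'' means ``smaller ideal'' $(f_H)\cap A$ inside $A$, hence ``larger quotient'', hence ``larger GK dimension of the quotient'', hence ``smaller pertinency'' — and then confirm via \cite[Lemma 5.2]{BHZ1} that it is genuinely $\GKdim A/\big((f_-)\cap A\big)$ and not some other quantity that governs $\p(A,-)$. The rest is the formal bookkeeping already carried out in Lemma~\ref{lem.subgrp}.
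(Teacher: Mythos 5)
Your final ``cleanest formulation'' paragraph is correct and is precisely the paper's proof: Lemma \ref{lem.subgrp} gives $(f_G)\cap A \subseteq (f_H)\cap A$, hence a surjection $A/((f_G)\cap A) \twoheadrightarrow A/((f_H)\cap A)$ and so $\GKdim A/((f_H)\cap A) \le \GKdim A/((f_G)\cap A)$, which combined with \cite[Lemma 5.2]{BHZ1} applied to both $G$ and $H$ yields $\p(A,G)\le \p(A,H)$.

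However, the route you take to get there contains statements that are simply false and should be removed, since they contradict the argument you end up giving. After correctly noting that Lemma \ref{lem.subgrp} gives $(f_G)\cap A \subseteq (f_H)\cap A$, you assert that this yields $\p(A,H)\le\p(A,G)$ (``still reversed'') and then claim the resolution is that $\GKdim (A\#G)/(f_G)$ is \emph{not} $\GKdim A/((f_G)\cap A)$ in general. Both claims are wrong: the containment already points the right way (the larger ideal $(f_H)\cap A$ gives the smaller quotient, hence smaller GK dimension, hence $\p(A,H)\ge\p(A,G)$), and \cite[Lemma 5.2]{BHZ1} --- which your own final step invokes --- says exactly that $\GKdim (A\#G)/(f_G) = \GKdim A/((f_G)\cap A)$, because the image of $A$ in $(A\#G)/(f_G)$ is $A/((f_G)\cap A)$. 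The digression about coset decompositions, split inclusions, and averaging idempotents is unnecessary and its conclusion is left vague. Finally, your closing summary reverses the direction yet again: the smaller group $H$ corresponds to the \emph{larger} ideal $(f_H)\cap A \supseteq (f_G)\cap A$, hence the smaller quotient, smaller GK dimension, and \emph{larger} pertinency --- which is the theorem, not its reverse. Strip out these detours and what remains is exactly the paper's argument.
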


\begin{proof} 
By \cite[Lemma 5.2]{BHZ1},
$\GKdim A \# G/( f_G ) = \GKdim A/((f_G)\cap A)$.
Lemma \ref{lem.subgrp} implies that 
$\GKdim A/(( f_G ) \cap A) \geq \GKdim A/(( f_H ) \cap A)$
and the result follows.
\end{proof}

\begin{corollary}\label{cor.refl} 
Let $A$ be a noetherian connected graded $\kk$-algebra and let $G$ be a finite subgroup of $\Aut_{\gr}(A)$. 
Suppose that $g \in G$ is a reflection. If $A$ and $A^{\langle g \rangle}$ have finite global
dimension, then the Auslander map $\gamma_{A,G}$ is not an isomorphism.
\end{corollary}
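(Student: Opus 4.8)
The plan is to leverage Theorem~\ref{thm.bhz} together with Theorem~\ref{thm.subgrp}. The key observation is that the hypotheses on $A$ and $A^{\langle g\rangle}$ force the pertinency $\p(A,\langle g\rangle)$ to be strictly less than $2$. First I would invoke the Shephard--Todd--Chevalley-type principle: since $g$ is a reflection and $A$ and $A^{\langle g\rangle}$ both have finite global dimension, the invariant ring $A^{\langle g\rangle}$ is itself (up to the relevant homological hypotheses) an AS regular algebra, so it is ``smooth'' and in particular $\End_{A^{\langle g\rangle}}(A)$ should have the wrong homological dimension to be $A\#\langle g\rangle$; more directly, by Theorem~\ref{thm.bhz} applied to the cyclic group $\langle g\rangle$, the Auslander map $\gamma_{A,\langle g\rangle}$ is an isomorphism if and only if $\p(A,\langle g\rangle)\geq 2$.

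Next I would argue that $\p(A,\langle g\rangle) < 2$. Here is where the finite global dimension of $A^{\langle g\rangle}$ enters: if $\gamma_{A,\langle g\rangle}$ were an isomorphism, then $A\#\langle g\rangle \cong \End_{A^{\langle g\rangle}}(A)$ would have finite global dimension (since $A$ is a finitely generated projective generator over the regular ring $A^{\langle g\rangle}$, its endomorphism ring inherits finite global dimension). But $A\#\langle g\rangle$ is a skew group algebra over the regular ring $A$ by a finite group, hence also has finite global dimension regardless; that alone does not give a contradiction, so instead I would compute $\p(A,\langle g\rangle)$ directly from the reflection condition. Since $g$ is a reflection, its trace series has the form $\Tr_A(g,t) = \frac{1}{(1-t)^{n-1}q(t)}$ with $q(1)\neq 0$, so the reflection number $\r(A,g) = \GKdim A - (n-1) = 1$. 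The point (using the remark in the excerpt referencing \cite[Theorem 5.3]{KKZ1}, or directly the structure of $A^{\langle g\rangle}$) is that when $A^{\langle g\rangle}$ has finite global dimension, the quotient $(A\#\langle g\rangle)/(f_{\langle g\rangle})$ has GK dimension exactly $\GKdim A - 1$, so $\p(A,\langle g\rangle) = 1 < 2$.

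Finally I would conclude: by Theorem~\ref{thm.subgrp} with $H = \langle g\rangle \leq G$, we get $\p(A,G) \leq \p(A,\langle g\rangle) < 2$. Since $A$ is noetherian, connected graded, and (being AS regular with the stated Hilbert series, or simply of finite global dimension as a connected graded noetherian algebra) satisfies the hypotheses of Theorem~\ref{thm.bhz}, the Auslander map $\gamma_{A,G}$ is an isomorphism only if $\p(A,G)\geq 2$. As this fails, $\gamma_{A,G}$ is not an isomorphism.

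The main obstacle I anticipate is pinning down precisely why $\p(A,\langle g\rangle) = 1$ (equivalently, why $(A\#\langle g\rangle)/(f_{\langle g\rangle})$ has GK dimension exactly $\GKdim A - 1$) under only the hypothesis that $A^{\langle g\rangle}$ has finite global dimension — rather than the full AS regular/Hilbert series hypotheses used in the remark after the corollary statement. One clean route is: finite global dimension of $A^{\langle g\rangle}$ plus $A$ being a finitely generated maximal Cohen--Macaulay $A^{\langle g\rangle}$-module (with $A^{\langle g\rangle}$ regular) forces $A$ to be a projective $A^{\langle g\rangle}$-module, hence $\gamma_{A,\langle g\rangle}$ being an isomorphism would make $A\#\langle g\rangle$ Morita equivalent to a direct factor behavior that contradicts $\r(A,g)=1$ via Theorem~\ref{thm.bhz}; so instead one shows directly $\p(A,\langle g\rangle) \le \r(A,\langle g\rangle) = 1$ using that the reflection hypothesis controls the relevant Hilbert/trace computation. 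I would present the argument so that it cites \cite[Theorem 5.3]{KKZ1} for the finite global dimension of $A^{\langle g\rangle}$ in the cases of interest and otherwise takes finite global dimension of $A^{\langle g\rangle}$ as a hypothesis, deducing $\p(A,\langle g\rangle)<2$ from the fact that an isomorphic Auslander map for a cyclic group acting with a reflection would contradict Theorem~\ref{thm.bhz}.
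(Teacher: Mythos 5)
Your overall skeleton (bound $\p(A,G)$ by $\p(A,\langle g\rangle)$ via Theorem~\ref{thm.subgrp}, then conclude via Theorem~\ref{thm.bhz}) matches the paper, but the crucial step --- actually proving $\p(A,\langle g\rangle)<2$, equivalently that $\gamma_{A,\langle g\rangle}$ fails to be an isomorphism --- is never established in your proposal. The two routes you offer do not work as stated: the inequality $\p(A,\langle g\rangle)\le \r(A,\langle g\rangle)$ is not available anywhere (the only known or conjectured relation, \cite[Conjecture 0.9]{BHZ1}, goes the \emph{other} way, $\p\ge\r$), and the claim that ``an isomorphic Auslander map for a cyclic group acting with a reflection would contradict Theorem~\ref{thm.bhz}'' is circular: Theorem~\ref{thm.bhz} would then merely report $\p(A,\langle g\rangle)\ge 2$, with no contradiction unless you have an independent argument. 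Likewise your assertion that $(A\#\langle g\rangle)/(f_{\langle g\rangle})$ has GK dimension exactly $\GKdim A-1$ is exactly the unproved content, not a consequence of the hypotheses you cite.

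The missing idea, which is the heart of the paper's proof, is a graded-degree obstruction built from freeness. With $H=\langle g\rangle$, the finite global dimension of $A$ and $A^{H}$ forces the graded $A^{H}$-module $A$ to be finitely generated and \emph{free} (\cite[Lemmas 1.10 and 1.11]{KKZ1}) --- you gestured at projectivity of $A$ over $A^H$ but did not use it. Freeness makes $\End_{A^{H}}(A)$ a matrix ring over $A^{H}$ with respect to a homogeneous basis whose generators sit in different (positive) degrees, so $\End_{A^{H}}(A)$ contains nonzero homogeneous elements of negative degree; since $A\#H$ is concentrated in nonnegative degrees, there is no graded isomorphism $A\#H\cong\End_{A^{H}}(A)$, and in particular $\gamma_{A,H}$ is not one. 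Only now does Theorem~\ref{thm.bhz} give $\p(A,H)<2$, whence $\p(A,G)\le\p(A,H)<2$ by Theorem~\ref{thm.subgrp}, and a final application of Theorem~\ref{thm.bhz} shows $\gamma_{A,G}$ is not an isomorphism. Note also that the reflection hypothesis enters only through the remark that it guarantees finite global dimension of $A^{\langle g\rangle}$ in the cases of interest (\cite[Theorem 5.3]{KKZ1}); your attempt to route the argument through $\r(A,g)=1$ puts weight on a quantity the proof never needs.
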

\begin{proof}
Let $H = \langle g \rangle$ be the subgroup of $G$ generated by the reflection $g$. 
By \cite[Lemmas 1.10 and 1.11]{KKZ1}, the graded $A^H$-module $A$ is finitely generated and free so
$\End_{A^H}(A)$ is a matrix ring over $A^H$.  This implies that $A\#H$ is not isomorphic
to $\End_{A^H}(A)$ as $A\# H$ is concentrated in nonnegative degree and there must be a map
of negative degree in $\End_{A^H}(A)$.  One therefore has $\p(A,G) \leq \p(A,H) < 2$ and so
$\gamma_{A,G}$ is not an isomorphism.
\end{proof}

\begin{corollary}
For $n \geq 2$, $\p(V_n, \cS_n) = 2$.
\end{corollary}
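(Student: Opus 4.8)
The plan is to prove the two inequalities $\p(V_n,\cS_n)\geq 2$ and $\p(V_n,\cS_n)\leq 2$ separately. The lower bound is already available: the argument inside the proof of Theorem~\ref{thm.main} shows that $\p(V_n,G)\geq 2$ for \emph{every} subgroup $G\leq\cS_n$, and in particular for $G=\cS_n$. (Equivalently, Theorem~\ref{thm.main} asserts that $\gamma_{V_n,\cS_n}$ is an isomorphism, so $\p(V_n,\cS_n)\geq 2$ by Theorem~\ref{thm.bhz}.)

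For the upper bound I would pass to the subgroup $H=\grp{(1~2)}$ generated by a single transposition. Theorem~\ref{thm.subgrp} gives $\p(V_n,\cS_n)\leq\p(V_n,H)$, so it suffices to compute $\p(V_n,H)$. If $n=2$ then $\cS_2=H$ and $\p(V_2,H)=2$ directly from the case $n=2^1$ of Theorem~\ref{thm.bhzineq}. If $n\geq 3$ then $(1~2)$ is a $2$-cycle, and as in Remark~\ref{rmk.ore} we realize $V_n$ as an iterated Ore extension of $V_2$ with $H$ acting trivially on $x_3,\dots,x_n$; combining \cite[Lemma~6.4]{BHZ2} with the case $n=2^1$ of Theorem~\ref{thm.bhzineq} again yields $\p(V_n,H)=2$. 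In all cases $\p(V_n,\cS_n)\leq 2$, and together with the lower bound this gives $\p(V_n,\cS_n)=2$.

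There is no real obstacle here: all of the work has been done, and the corollary is simply the assembly of the lower bound from Theorem~\ref{thm.main}, the monotonicity of pertinency under passage to subgroups (Theorem~\ref{thm.subgrp}), and the exact value $\p(V_n,\grp{(1~2)})=2$ coming from the results of Bao, He, and Zhang. We note that this does not contradict Corollary~\ref{cor.refl}, since $(1~2)$ is not a reflection on $V_n$ \cite[Lemma~1.7(4)]{KKZ2}---in contrast to the commutative situation, where a transposition acting on $\kk[x_1,\dots,x_n]$ is a reflection and the Auslander map for $\cS_n$ fails to be an isomorphism.
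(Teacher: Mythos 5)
Your proof is correct and follows essentially the same route as the paper: the lower bound from Theorem~\ref{thm.main}, and the upper bound from Theorem~\ref{thm.subgrp} applied to the subgroup generated by a $2$-cycle, whose pertinency is $2$ by Theorem~\ref{thm.bhzineq} (via Remark~\ref{rmk.ore} when $n\geq 3$). The only difference is that you spell out the Ore-extension reduction for $n\geq 3$ explicitly, which the paper leaves implicit in its citation of Theorem~\ref{thm.bhzineq}.
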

\begin{proof}
By Theorem \ref{thm.main}, $\p(V_n,\cS_n) \geq 2$.
The cyclic subgroup generated by a 2-cycle is a subgroup of $\cS_n$ with pertinency exactly equal to $2$
(Theorem \ref{thm.bhzineq}), so $\p(V_n, \cS_n) \leq 2$ by Theorem \ref{thm.subgrp}.
\end{proof}

The above theorems, combined with prior results, are summarized in the table below 
as they apply to nontrivial subgroups of $\cS_3$ acting on $V_3$.
It is clear that pertinency is stable under conjugation, so we list the subgroups up to conjugacy.


\setlength{\tabcolsep}{.5em}
\begin{center}
\begin{tabular}{|c|c|c|c|c|} \hline
subgroup   & $\p(V_3,G)$		& reason & $\r(V_3,G)$ \\ \hline
$\grp{(1~2)}$      & $2$ & Theorem \ref{thm.bhzineq}  & 2      \\ \hline
$\grp{(1~2~3)}$     & $2$ or $3$	& Theorem \ref{thm.main} & 2 \\ \hline
$\grp{(1~2),(2~3)}$ & $2$ & Theorems \ref{thm.main}, \ref{thm.subgrp} & 2 \\ \hline
\end{tabular}
\end{center}

We need a few additional results to handle the subgroups of $\cS_4$ acting on $V_4$.

\begin{proposition} 
\label{prop.rob} 
Let $G$ be the Klein 4 subgroup $\langle (1~2)(3~4), (1~3)(2~4)\rangle$ acting on $A=V_4$. Then $\p(A,G)=4$.
\end{proposition}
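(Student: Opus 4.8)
The plan is to pin down the two-sided ideal $(f_G) \cap A$ inside $A = V_4$ explicitly enough to control $\GKdim A/((f_G)\cap A)$, and then apply $\p(A,G) = \GKdim A - \GKdim(A\#G)/(f_G)$ together with the identity $\GKdim (A\#G)/(f_G) = \GKdim A/((f_G)\cap A)$ from \cite[Lemma 5.2]{BHZ1}. Since $\GKdim V_4 = 4$, showing $\p(A,G) = 4$ amounts to proving that $A/((f_G)\cap A)$ is finite-dimensional, i.e. that $(f_G)\cap A$ contains $A_{\geq N}$ for some $N$. As $V_4$ is finite over its central subalgebra $T = \kk[x_1^2,x_2^2,x_3^2,x_4^2]$, it suffices to produce enough central elements of $T$ inside $(f_G)\cap A$ so that the image $T'$ of $T$ in $(A\#G)/(f_G)$ is finite-dimensional.

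First I would run the Brown–Lorenz-style extraction from Lemma \ref{lem.elts}, but now adapted to the Klein four-group $G = \langle (1~2)(3~4),(1~3)(2~4)\rangle$ rather than a full symmetric group. Starting from $f_G = \sum_{\sigma\in G} 1\#\sigma$ and forming iterated commutators $x_a^2 p - p x_b^2$ (legitimate because each $x_\ell^2 \in C(V_4)$), the components indexed by the three nonidentity double transpositions can be killed one at a time: for each such $\sigma$ there is a pair $(a,b)$ with $\sigma(b) = a$, so an appropriate product of factors $(x_a^2 - x_{\sigma(b)}^2)$ annihilates that component while leaving the identity component as a product of squares. Symmetrizing as in the construction of $\hat f_{i,j}$ then lands the resulting elements in $T \cap (f_G)$. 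The goal is to show the ideal of $T$ generated by the elements so produced has GK (=Krull) dimension $0$, which by the depth/grade inequality reduces to exhibiting a regular sequence of length $4$ among them in the polynomial ring $T$ on four variables $y_i = x_i^2$; equivalently, the produced polynomials should have no common zero in $\bar\kk^4$. One convenient organizing principle: $G$ acts on $y_1,y_2,y_3,y_4$ by the same permutations, and the three involutions generate pairings, so the relevant $G$-relative differences $y_i - y_{\sigma(i)}$ for $\sigma \neq e$ already cut out a large-codimension locus; combining these with one further generator coming from the identity-component product should leave only finitely many common zeros, hence (after passing to the full ideal $J$, which is $G$-stable) none.

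The main obstacle I anticipate is bookkeeping: unlike the $\cS_n$ case, where Proposition \ref{prop.dim} gives the clean bound $n-2$ from a two-term regular sequence ($\hat f$ and the Vandermonde), here I need to squeeze out \emph{all four} units of GK dimension, so a two-element regular sequence is not enough and I must verify that the extracted central polynomials genuinely have empty common vanishing locus in affine $4$-space — this is the step where a careless choice of which commutators to form could leave a positive-dimensional component. I would handle this by choosing the iterated commutators so that, for each ordered pair $(a,b)$ with $a\neq b$, some generator of $J$ restricts to a nonzero polynomial modulo $(y_a - y_b)$ (the argument in Proposition \ref{prop.dim} shows this forces the common zero locus into the diagonal-free part), and then observing that on the locus where all $y_i$ are distinct the identity-component products are nonzero, giving a contradiction. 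Once $\GKdim T/J = 0$ is established we get $\GKdim T' \leq \GKdim T/J = 0$, hence $\GKdim (A\#G)/(f_G) = 0$, hence $\p(A,G) = 4 - 0 = 4$; since $\p(A,G) \leq \GKdim A = 4$ always holds, this is the exact value. (In practice the cleanest route may be a direct Gröbner-basis / computer-algebra check that $A\#G/(f_G)$ is finite-dimensional, which the paper is plainly willing to invoke elsewhere; the hand argument above is the conceptual backbone in case an explicit certificate is preferred.)
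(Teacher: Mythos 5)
Your overall framing (reduce to showing $A/((f_G)\cap A)$ is finite-dimensional via the central subalgebra $T=\kk[x_1^2,\dots,x_4^2]$ and \cite[Lemma 5.2]{BHZ1}) matches the paper, but the engine you propose for producing elements of $(f_G)\cap T$ cannot reach pertinency $4$. Every central element extracted by the Lemma~\ref{lem.elts}-style recursion $p_k = x_{a_k}^2 p_{k-1} - p_{k-1}x_{b_k}^2$ has identity component $\prod_k (y_{a_k}-y_{b_k})$, and to kill the component of each nonidentity $\sigma\in G$ you must include a step with $\sigma(b_k)=a_k$; since the three nonidentity elements of the Klein group are fixed-point-free double transpositions, every element so produced is divisible by at least one difference $y_a-y_b$ for each $\sigma$, hence vanishes identically on the diagonal line $y_1=y_2=y_3=y_4$. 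So the ideal $J$ you can generate this way has Krull dimension at least $1$, and no choice of commutators fixes this. Your proposed verification principle is also logically insufficient: knowing that for each pair $(a,b)$ some generator is nonzero modulo $(y_a-y_b)$ only rules out the zero locus \emph{containing} that hyperplane, not meeting it; for instance the four natural outputs $\prod_{j\neq i}(y_i-y_j)$, $1\le i\le 4$, all vanish on the two-dimensional locus $\{y_1=y_2,\ y_3=y_4\}$ even though none is divisible by $y_1-y_2$.

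The paper's proof avoids this obstruction by a different mechanism that your proposal is missing: it multiplies $f$ by the \emph{linear} $G$-eigenvectors such as $x_1+x_2-x_3-x_4$ (on which the nonidentity group elements act by $\pm 1$), using the skew-commutativity of $V_4$ so that $(x_1+x_2-x_3-x_4)^2 = x_1^2+x_2^2+x_3^2+x_4^2$. This places the sum of squares $r = x_1^2+x_2^2+x_3^2+x_4^2$ in $(f)\cap A$ — an element \emph{not} divisible by any difference, hence not vanishing on the diagonal — and then bootstraps: $(x_i-x_j)(x_i^2+x_j^2)$, $x_i^4-x_j^4$, $x_i^2x_j^2+x_k^4$, and finally $x_i^4$ all lie in $(f)\cap A$, so $A/((f)\cap A)$ is finite-dimensional and $\p(A,G)=4$. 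Without some step of this kind (exploiting the sign characters of $G$ on degree-one elements, or an explicit verified computation rather than the sketched Gröbner ``in principle'' remark), your argument stalls at a bound no better than $\p(A,G)\geq 3$.
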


\begin{proof}
Let $f = \sum_{\sigma \in G} 1 \# \sigma$.
Observe that
\begin{align*}q_{(1~2)(3~4)} &:=\frac{1}{2}(x_1 + x_2 - x_3 - x_4)\left[(x_1 + x_2 - x_3 - x_4) f + f (x_1 + x_2 - x_3 - x_4) \right] \\ &= (x_1^2 + x_2^2 + x_3^2 + x_4^2) \# e + (x_1^2 + x_2^2 + x_3^2 + x_4^2) \# (1~ 2)(3~ 4)  \in (f). 
\end{align*}
Similarly, for each $\sigma \in G$, $q_{\sigma} = (x_1^2 + x_2^2 + x_3^2 + x_4^2) \# e + (x_1^2 + x_2^2 + x_3^2 + x_4^2) \# \sigma  \in (f)$. Additionally, $(x_1^2 + x_2^2 + x_3^2 + x_4^2) f \in (f)$, so by subtracting $\sum_{\sigma \in G} q_{\sigma}$, we have 
\[ r = (x_1^2 + x_2^2 + x_3^2 + x_4^2) \in (f) \cap A.\]

We also have $s = (x_1^2 + x_2^2 - x_3^2 - x_4^2) f + f (x_1^2 + x_2^2 - x_3^2 - x_4^2) \in (f)$ and therefore, $(x_1 - x_2)r + x_1s-sx_2 = (x_1 - x_2) (x_1^2 + x_2^2) \in (f) \cap A$. Indeed, by the same argument, for any $1 \leq i \neq j \leq 4$, $f_{i,j}= (x_i - x_j) (x_i^2 + x_j^2) \in (f) \cap A.$ Now, we see that $x_if_{i,j} + f_{i,j}x_i = 2(x_i^4 + x_i^2x_j^2) \in (f) \cap A$ and 
$(x_i-x_j) f_{i,j} = x_i^4 + 2x_i^2x_j^2 + x_j^4 \in (f) \cap A$. 
Subtracting, it follows that $x_i^4 - x_j^4 \in (f) \cap A$.  
Therefore, we have that $x_i^2x_j^2 + x_k^4 \in (f) \cap A$ for any $1 \leq i \neq j \leq 4$, 
and $1 \leq k \leq 4$.

Now $r^2$ is congruent to $-8x_i^4$ modulo the other six relations, 
and therefore $x_i^4 \in (f) \cap A$ for all $i$, completing our proof.
\end{proof}
Using Molien's Theorem, one can show that in lowest terms, the Hilbert series of $A^G$ as in
Proposition \ref{prop.rob} is
$$H_{A^G}(t) = \frac{1 - 3t + 5 t^2 - 3t^3 +t^4}{(1+t^2)(1-t)^4},$$
and so $A^G$ is a rather complicated AS Gorenstein ring.  Hence, Theorem \ref{thm.isol} provides useful information for graded isolated singularities.
\begin{proposition}
\label{prop.frank}
Let $\sigma = (1\ 2)(3\ 4)\cdots(2n-1\ 2n)$.  Then $\p(V_{2n}, \langle \sigma \rangle) = 2n$.
\end{proposition}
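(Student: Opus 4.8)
The plan is to show that the quotient $(V_{2n}\#\grp{\sigma})/(f)$ is finite-dimensional, where $f = 1\#e + 1\#\sigma$; note that $\sigma$ has order $2$, so $\grp{\sigma} = \{e,\sigma\}$. Since $\GKdim V_{2n} = 2n$ and pertinency is always at most the GK dimension of the algebra, it suffices to prove $\GKdim (V_{2n}\#\grp{\sigma})/(f) = 0$. By \cite[Lemma 5.2]{BHZ1} this GK dimension equals $\GKdim V_{2n}/((f)\cap V_{2n})$, so the goal becomes: exhibit enough elements of $(f)\cap V_{2n}$ to force the quotient to be finite-dimensional.

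The key observation is that, writing $A = V_{2n}$, the intersection $(f)\cap A$ contains every $\sigma$-anti-invariant element of $A$, by a two-step bracket with $f$. For $a\in A$ one has $(a\#e)f - f(a\#e) = (a - \sigma(a))\#\sigma \in (f)$; setting $b = a - \sigma(a)$, so that $\sigma(b) = -b$, a second bracket gives $(b\#\sigma)f - f(b\#\sigma) = 2b\#e \in (f)$. Hence $a - \sigma(a) \in (f)\cap A$ for all $a\in A$. In particular, taking $a = x_{2k-1}$ shows that $x_{2k-1} - x_{2k} \in (f)\cap A$ for $k = 1,\dots,n$.

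Consequently $A/((f)\cap A)$ is a quotient of $B := V_{2n}/(x_1-x_2,\, x_3-x_4,\,\dots,\,x_{2n-1}-x_{2n})$, and it remains to check that $B$ is finite-dimensional. Collapsing the images of $x_{2k-1}$ and $x_{2k}$ to a single generator $y_k$, the algebra $B$ is generated by $y_1,\dots,y_n$: the relations $x_ix_j + x_jx_i = 0$ ($i\neq j$) of $V_{2n}$ become $y_ky_l + y_ly_k = 0$ for $k\neq l$, while each relation $x_{2k-1}x_{2k} + x_{2k}x_{2k-1} = 0$ becomes $2y_k^2 = 0$, i.e.\ $y_k^2 = 0$ since $\operatorname{char}\kk = 0$. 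Thus $B$ is the exterior algebra on $y_1,\dots,y_n$, of dimension $2^n$. Therefore $A/((f)\cap A)$ is finite-dimensional, so $\GKdim (V_{2n}\#\grp{\sigma})/(f) = 0$ and $\p(V_{2n},\grp{\sigma}) = 2n$.

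All of the computations above are routine; the one step deserving attention is the identification of $B$ — specifically, checking that the ``diagonal'' relations $x_{2k-1}x_{2k} + x_{2k}x_{2k-1} = 0$ survive the collapse and force the squares $y_k^2$ to vanish, which is exactly where characteristic zero is used. (One could instead run the recursive argument of Lemma \ref{lem.elts} directly on $f$, but for an order-two group the anti-invariant bracket is cleaner.)
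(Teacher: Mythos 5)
Your proof is correct and follows essentially the same route as the paper: both arguments place the differences $x_{2k-1}-x_{2k}$ in $(f)\cap V_{2n}$ by multiplying $f$ on the left and right by suitable elements, and then conclude that $V_{2n}/((f)\cap V_{2n})$ is finite-dimensional, giving pertinency $2n$ via \cite[Lemma 5.2]{BHZ1}. The only cosmetic differences are that the paper gets $x_i - x_{i+1}$ in one step as $x_i f - f x_{i+1}$ and then skew-commutes to put the squares $x_i^2$ in the ideal, whereas you use a two-step anti-invariant bracket and identify the quotient by the differences directly as an exterior algebra on $n$ generators.
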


\begin{proof}
Let $f = 1 \# e + 1 \# \sigma$.  
Then for $i$ odd, one has $x_i f - f x_{i+1} = (x_i - x_{i+1})$.
Now skew-commuting $x_i - x_{i+1}$ with either $x_i$ or $x_{i+1}$ shows that 
$x_i^2$ and $x_{i+1}^2$ are in $(f) \cap A$. 
Hence, $\dim_\kk A/ ((f) \cap A) < \infty$ and $\p(A,\langle \sigma \rangle) = 2n$.
\end{proof}
We list the pertinencies and reflection numbers of subgroups of $\cS_4$ acting on $V_4$.
\vspace{-1em}
\begin{center}
\begin{tabular}{|c|c|c|c|c|} \hline
subgroup(s)		& $\p(V_4,G)$ & reason  & $\r(V_4,G)$  \\ \hline
$\grp{(1~2)}$  & $2$ & Theorem \ref{thm.bhzineq} & 2       \\ \hline
$\grp{(1~2)(3~4)}$          & $4$           & Proposition \ref{prop.frank}                   & 4       \\ \hline
$\grp{(1~2~3)}$, $\grp{(1~2~3),(1~2~4)}$          & $2$ or $3$ & Theorems \ref{thm.main}, \ref{thm.subgrp} & 2       \\ \hline
$\grp{(1~2~3~4)}$            & $4$           & Theorem \ref{thm.bhzineq}                  & 4       \\ \hline
$\grp{(1~2)(3~4),(1~3)(2~4)}$ & $4$           & Proposition \ref{prop.rob}                     & 4       \\ \hline
\makecell{$\grp{(1~2~3~4),(2~4)}$, $\grp{(1~2),(3~4)}$,\\ $\grp{(1~2~3~4),(1~2)}$, $\grp{(1~2~3),(1~2)}$}       & $2$           & Theorems \ref{thm.main}, \ref{thm.subgrp} & 2       \\ \hline
\end{tabular}
\end{center}

We conjecture that $\p(V_3,\grp{(123)})=2$. 
If this conjecture holds, then $\p(V_4,G)=2$ for $G=\grp{(1~2~3)}$
and $\grp{(1~2~3),(1~2~4)}$. 
We now give an alternate way to realize an upper bound on $\p(V_4, \grp{(1~2),(3~4)})$.

Let $A$ and $B$ be algebras with multiplication maps $\mu_A$ and $\mu_B$, 
respectively. 
A $\kk$-linear homomorphism $\tau:B \tensor A \rightarrow A \tensor B$ is a  
{\sf twisting map} provided $\tau(b \tensor 1_A)=1_A \tensor b$ and 
$\tau(1_B \tensor a) = a \tensor 1_B$, $a \in A$, $b \in B$.
A multiplication on $A \tensor B$ is then given
by $\mu_\tau := (\mu_A \tensor \mu_B) \circ (\id_A \tensor \tau \tensor \id_B)$.
By \cite[Proposition 2.3]{cap}, $\mu_\tau$ is associative if and only if 
$\tau \circ (\mu_B \tensor \mu_A) 
	= \mu_\tau \circ (\tau \tensor \tau) \circ (\id_B \tensor \tau \tensor \id_A)$
as maps $B \tensor B \tensor A \tensor A \rightarrow A \tensor B$.
The triple  $(A \tensor B,\mu_\tau)$ is
a {\sf twisted tensor product} of $A$ and $B$, 
denoted by $A \tensor_\tau B$.

Let $G$ and $H$ be groups acting on $A$ and $B$, respectively.
Then $G \times H$ acts naturally on $A \tensor B$ by
$(g,h)(a \tensor b) = g(a) \tensor h(b)$.

\begin{lemma}
\label{lem.tensor}
Let $G$ and $H$ be finite groups of automorphisms 
acting on algebras $A$ and $B$.
Let $\tau : B \tensor A \rightarrow A \tensor B$ be a twisting map. 
Then $G \times H$ acts on $A \tensor_\tau B$ provided
\begin{align}
\label{eq.taucond}
\tau( (h,g)(b \tensor a) ) = (g,h)\tau(a \tensor b).
\end{align}
\end{lemma}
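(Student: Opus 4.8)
The plan is to verify directly that the $G\times H$-action on $A\tensor B$ as a vector space respects the twisted multiplication $\mu_\tau$. The action is already an action on the underlying vector space, so the only thing to check is that each $(g,h)\in G\times H$ is an algebra endomorphism of $(A\tensor B,\mu_\tau)$ — it is automatically bijective, being part of a group action, so it will then be an automorphism. Concretely, I want to show
\[
(g,h)\bigl(\mu_\tau\bigl((a\tensor b)\tensor(a'\tensor b')\bigr)\bigr)
=\mu_\tau\bigl((g,h)(a\tensor b)\tensor(g,h)(a'\tensor b')\bigr)
\]
for all $a,a'\in A$, $b,b'\in B$.

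First I would unwind the right-hand side using the definition $\mu_\tau=(\mu_A\tensor\mu_B)\circ(\id_A\tensor\tau\tensor\id_B)$: since $(g,h)$ acts factorwise, the right-hand side equals $(\mu_A\tensor\mu_B)\bigl((\id_A\tensor\tau\tensor\id_B)(g(a)\tensor h(b)\tensor g(a')\tensor h(b'))\bigr)$, and the only place $\tau$ is applied is to the middle tensor $h(b)\tensor g(a')$, i.e. to $(h,g)(b\tensor a')$. Applying hypothesis \eqref{eq.taucond} rewrites $\tau((h,g)(b\tensor a'))=(g,h)\tau(a'\tensor b)$. Then I pull the factorwise action $(g,h)$ back out through $\id_A\tensor(-)\tensor\id_B$ and through $\mu_A\tensor\mu_B$, using only that $g$ and $h$ are algebra homomorphisms of $A$ and $B$ respectively, to recognize the result as $(g,h)$ applied to $(\mu_A\tensor\mu_B)\circ(\id_A\tensor\tau\tensor\id_B)$ evaluated on $(a\tensor b)\tensor(a'\tensor b')$, which is exactly the left-hand side. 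It also remains to note that $(g,h)$ fixes the unit $1_A\tensor 1_B$, which is immediate since $g(1_A)=1_A$ and $h(1_B)=1_B$.

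The computation is entirely bookkeeping — tracking which tensor slot $\tau$ lands in and invoking \eqref{eq.taucond} once — so there is no serious obstacle. The one point deserving a moment's care is making sure the indexing in \eqref{eq.taucond} matches: $\tau$ has source $B\tensor A$, so the element being twisted in $\mu_\tau$ is $h(b)\tensor g(a')\in B\tensor A$, which is $(h,g)$ (not $(g,h)$) applied to $b\tensor a'$; the hypothesis is stated with exactly this swap of the two group coordinates, so it applies verbatim. One could alternatively phrase the whole argument diagrammatically as an equality of maps $A\tensor B\tensor A\tensor B\to A\tensor B$ built from $\mu_A,\mu_B,\tau$ and the action maps, reducing the claim to \eqref{eq.taucond} together with functoriality of $\tensor$; I would include the element-wise version for readability.
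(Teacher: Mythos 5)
Your proposal is correct and takes essentially the same approach as the paper: a direct element-wise verification that each $(g,h)$ is multiplicative with respect to $\mu_\tau$, unwinding $\mu_\tau = (\mu_A \tensor \mu_B)\circ(\id_A \tensor \tau \tensor \id_B)$, applying \eqref{eq.taucond} once to the middle slot $h(b)\tensor g(a')$, and then using that $g$ and $h$ are algebra homomorphisms to pull the action back out.
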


\begin{proof}
As observed above, $G \times H$ acts naturally on $A \tensor B$
and similarly $H \times G$ acts on $B \tensor A$.
Let $\mu_A$ and $\mu_B$ be the multiplication maps on $A$ and $B$,
respectively, and $\mu_\tau$ the (twisted) multiplication on 
$A \tensor_\tau B$.
Recall that $A \tensor_\tau B$ has the same basis as $A \tensor B$. 

Let $a \tensor b, a' \tensor b' \in A \tensor _\tau B$
and $\sum a_i'' \tensor b_i'' = \tau(b \tensor a')$.
Assuming \eqref{eq.taucond},
\begin{align*}
\mu_\tau((g,h)(a \tensor b) \tensor (g,h)(a' \tensor b'))
	&= \mu_\tau((g(a) \tensor h(b)) \tensor (g(a') \tensor h(b'))) \\
	&= (\mu_A \tensor \mu_B)(g(a) \tensor \tau( (h,g)(b \tensor a') ) \tensor h(b')) \\
	&= (\mu_A \tensor \mu_B)(g(a) \tensor (g,h)\tau(a' \tensor b) \tensor h(b')) \\
	&= \sum g(a)g(a_i'') \tensor h(b_i'')h(b') \\
	&= (g,h)((\mu_A \tensor \mu_B)( a \tensor \tau(b \tensor a) \tensor b)) \\
	&= (g,h)(\mu_\tau((a \tensor b)\tensor(a' \tensor b') )). \qedhere
\end{align*}
\end{proof}

\begin{example}
\label{ex.ttensor1}
Suppose $A=\kk_{-1}[x_1,\hdots,x_n]$ and $B=\kk_{-1}[y_1,\hdots,y_m]$.
Define a twisting map $\tau:B \tensor A \rightarrow A \tensor B$
by $b \tensor a \mapsto (-1)^{k\ell}(a \tensor b)$
for $a \in A_k$ and $b \in B_\ell$ and extend $\tau$ linearly.
Then $A \tensor_\tau B \iso V_{n+m}$.
If $G$ and $H$ are any groups acting linearly as automorphisms
on $A$ and $B$ respectively then they preserve degree and hence 
\eqref{eq.taucond} holds in this case.
\end{example}

The proof of the next theorem should be compared to \cite[Lemma 6.4]{BHZ2}.

\begin{theorem}
\label{thm.tensor}
Let $A$ and $B$ be affine algebras generated in degree 1 
and let $\tau:B \tensor A \to A \tensor B$ be a twisting map.
Assume $A \tensor_\tau B$ is Cohen-Macaulay.
Let $G$ and $H$ be finite subgroups of automorphisms 
acting on $A$ and $B$, respectively.
If \eqref{eq.taucond} holds for $G$, $H$, and $\tau$, then one has the inequality
\begin{align}
\label{eq.ttensor}
\max(\p(A,G),\p(B,H)) \geq \p(A \tensor_\tau B, G \times H).
\end{align}
\end{theorem}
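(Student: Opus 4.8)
The plan is to bound $\GKdim\bigl((A\tensor_\tau B)\#(G\times H)\bigr)/(f_{G\times H})$ from below by comparing it to the analogous quotients for $A\#G$ and $B\#H$ separately. The key structural observation is that $(A\tensor_\tau B)\#(G\times H)$ decomposes compatibly with the tensor factors: writing $e=\sum_{g\in G}1\#g$ and $e'=\sum_{h\in H}1\#h$ for the averaging elements in $A\#G$ and $B\#H$ respectively, the element $f_{G\times H}=\sum_{(g,h)\in G\times H}1\#(g,h)$ should factor (up to the twisting) as a product involving the images of $e$ and $e'$. First I would set up the natural algebra map $A\#G\to (A\tensor_\tau B)\#(G\times H)$ sending $a\#g\mapsto (a\tensor 1_B)\#(g,e)$, and similarly for $B\#H$, using Lemma~\ref{lem.tensor} to guarantee these are well-defined algebra homomorphisms; then I would track where $f_G$ and $f_H$ go and show that $f_{G\times H}$ lies in the ideal generated by (the image of) $f_G$, and likewise in the ideal generated by (the image of) $f_H$.

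Granting that, one gets surjections (or at least GKdim-comparisons) from $(A\#G)/(f_G)\tensor B$-type objects onto $(A\tensor_\tau B)\#(G\times H)/(f_{G\times H})$: more precisely, since $f_{G\times H}\in \bigl((f_G\tensor 1)\bigr)$ inside $(A\tensor_\tau B)\#(G\times H)$, the quotient by $(f_{G\times H})$ is a further quotient of $(A\tensor_\tau B)\#(G\times H)/\bigl(f_G\tensor 1\bigr)$, and the latter should be identifiable (as algebras, using that the twisting and the group actions are compatible) with $\bigl((A\#G)/(f_G)\bigr)\tensor_{\tau'} (B\#H)$ for an induced twisting $\tau'$. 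Then $\GKdim$ of a twisted tensor product is at most the sum of the $\GKdim$s of the factors (this holds for twisted tensor products under mild hypotheses — I would cite the relevant fact or prove it via a filtration argument, since a twisted tensor product is a deformation of the ordinary tensor product), giving
\[
\GKdim (A\tensor_\tau B)\#(G\times H)/(f_{G\times H}) \leq \GKdim (A\#G)/(f_G) + \GKdim(B\#H).
\]
Combining with the symmetric inequality (swapping the roles of $A$ and $B$) and using $\GKdim(A\tensor_\tau B)=\GKdim A+\GKdim B$ (which follows from Cohen-Macaulayness, or directly from the Hilbert series being the product), one subtracts to obtain
\[
\p(A\tensor_\tau B,G\times H) \leq \min\bigl(\p(A,G)+\GKdim B,\ \GKdim A+\p(B,H)\bigr).
\]
This is not yet the claimed bound; to upgrade to $\max(\p(A,G),\p(B,H))\geq \p(A\tensor_\tau B,G\times H)$ I would instead iterate the factorization on \emph{both} factors at once — i.e., show $f_{G\times H}$ generates an ideal containing both $f_G\tensor 1$ and $1\tensor f_H$ simultaneously is too strong, so the right move is: $(A\tensor_\tau B)\#(G\times H)/(f_{G\times H})$ surjects onto a quotient where we kill $f_G\tensor 1$, whose GKdim is $\GKdim\bigl((A\#G)/(f_G)\bigr) + \GKdim B$, \emph{but} we can equally route through $1\tensor f_H$; taking the better of the two and using that the Cohen-Macaulay/AS-regular hypotheses force $(A\tensor_\tau B)\#(G\times H)/(f_{G\times H})$ to have GKdim controlled by the smaller factor when the other is killed entirely — here I would use that if $\p(B,H)$ is large then $(B\#H)/(f_H)$ is small-dimensional, contributing little. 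The cleanest path is: since $\dim$ considerations give $\p(A\tensor_\tau B,G\times H) = \GKdim A+\GKdim B - \GKdim(A\tensor_\tau B)\#(G\times H)/(f_{G\times H})$ and $(A\tensor_\tau B)\#(G\times H)/(f_{G\times H})$ has $\GKdim \geq \GKdim B$ (it contains a copy of $B$ via the second factor, as $1\tensor B$ survives since $f_{G\times H}$ acts "transversally"), we get $\p(A\tensor_\tau B,G\times H)\leq \GKdim A = \p(A,G) + (\GKdim A - \p(A,G))$... — so I would instead directly show $\GKdim (A\tensor_\tau B)\#(G\times H)/(f_{G\times H}) \geq \GKdim A + \GKdim(B\#H)/(f_H)$ from a copy of $A\tensor ((B\#H)/(f_H))$ sitting inside, and symmetrically, then take the max.

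\textbf{Main obstacle.} The crux is the claim that $f_{G\times H}$ generates (in $(A\tensor_\tau B)\#(G\times H)$) an ideal containing $f_G\tensor 1_{B\#H}$, or dually that the quotient by $(f_{G\times H})$ contains a faithful copy of $A\tensor \bigl((B\#H)/(f_H)\bigr)$ — making this precise requires carefully untangling how the twisting map $\tau$ interacts with multiplication by the group elements, which is exactly the content that \eqref{eq.taucond} is designed to control, and this is where the proof parallels \cite[Lemma 6.4]{BHZ2} (the Ore extension case) but with the added bookkeeping of a genuine twisting rather than a trivial action. A secondary technical point is justifying $\GKdim(A\tensor_\tau B)=\GKdim A + \GKdim B$ and the subadditivity of GKdim for twisted tensor products of quotients; the Cohen-Macaulay hypothesis and affineness should make the filtration-to-associated-graded argument go through, but one must check the filtration by $\tau$ is well-behaved. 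I expect the group-theoretic/ideal-membership step to be the real work, with the GKdim bookkeeping being routine once the structural identification is in hand.
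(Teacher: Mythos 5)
Your endpoint is essentially the paper's argument, but the write-up contains directional errors that must be excised before it is a proof. Since $f_{G\times H}=\bigl(\sum_{g\in G}1\#(g,e_H)\bigr)\bigl(\sum_{h\in H}1\#(e_G,h)\bigr)=F_G\,F_H$, one has $(f_{G\times H})\subseteq (F_G)$, so the quotient by $(f_{G\times H})$ surjects onto the quotient by $(F_G)$ --- not the reverse, as claimed in your middle paragraph (``the quotient by $(f_{G\times H})$ is a further quotient of $\dots/(f_G\tensor 1)$''). Consequently your displayed upper bound $\GKdim (A\tensor_\tau B)\#(G\times H)/(f_{G\times H}) \leq \GKdim (A\#G)/(f_G) + \GKdim(B\#H)$ and the intermediate estimate $\p(A\tensor_\tau B,G\times H)\leq\min\bigl(\p(A,G)+\GKdim B,\ \GKdim A+\p(B,H)\bigr)$ are both unsupported and, more importantly, point the wrong way: to bound pertinency from above you need the GK dimension of the quotient bounded from \emph{below}. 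The correct mechanism is the one you state at the end of your second paragraph: the surjection $(A\tensor_\tau B)\#(G\times H)/(f_{G\times H})\twoheadrightarrow (A\tensor_\tau B)\#(G\times H)/(F_G)\iso \bigl((A\#G)/(f_G)\bigr)\tensor_\tau (B\#H)$, where \eqref{eq.taucond} is what lets $F_G$ commute with $B$ so the identification holds; your closing suggestion to instead exhibit ``a copy of $A\tensor\bigl((B\#H)/(f_H)\bigr)$ sitting inside'' is the wrong tool, since a subalgebra copy would require an injectivity you have not established and do not need. With the symmetric surjection killing $F_H$, and additivity of GK dimension across the twisted tensor product (which does need justification; the paper cites a modification of \cite[Proposition 3.11]{KL}, as you anticipated), one gets $\p(A\tensor_\tau B,G\times H)\leq\min\bigl(\p(A,G),\p(B,H)\bigr)$, which implies \eqref{eq.ttensor}.

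For comparison, the paper packages the same reduction through Theorem \ref{thm.subgrp}: it bounds $\p(A\tensor_\tau B,G\times H)$ by the pertinency of the subgroups $G\times\{e_H\}$ and $\{e_G\}\times H$, and then computes those exactly, e.g.\ $\bigl((A\tensor_\tau B)\#(G\times\{e_H\})\bigr)/(F)\iso (A\#G)/(f_G)\tensor_\tau B$, giving $\p(A\tensor_\tau B,G\times\{e_H\})=\p(A,G)$. Your factorization $f_{G\times H}=F_GF_H$ accomplishes the same subgroup reduction directly inside $(A\tensor_\tau B)\#(G\times H)$, so once the direction of the surjections is fixed and the ``sitting inside'' step is replaced by the quotient map, your argument and the paper's coincide in substance.
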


\begin{proof}
It suffices by Theorem \ref{thm.subgrp} to prove that
$\p(A \tensor B,G \times \{e_H\}) = \p(A,G)$ and
$\p(A \tensor B,\{e_G\} \times H) = \p(B,H)$.
We will prove the first and the second follows similarly.

Since $G \times H$ satisfies \eqref{eq.taucond},
then clearly so does its subgroup $G \times \{e_H\}$.
By a modification of \cite[Proposition 3.11]{KL},
$\GKdim(A \tensor_\tau B) = \GKdim(A) + \GKdim(B)$.

Set $F = \sum_{g \in G} 1 \# (g,e_H)$. Then $F$ commutes with $B$ and so
\[ \left( (A \tensor_\tau B)\#(G \times \{e_H\}) \right)/(F)
	\iso \left( (A\#G) \tensor_\tau B \right)/(F)
	\iso (A\#G)/(f_G) \tensor_\tau B.\]
Now,
\begin{align*}
\p(A \tensor B,G \tensor \{e_H\})
	&= \GKdim (A \tensor_\tau B) 
		- \GKdim\left( (A \tensor_\tau B)\#(G \times \{e_H\}) \right)/(F) \\
	&= (\GKdim(A) + \GKdim(B)) - \GKdim((A\#G)/(f_G) \tensor_\tau B) \\
	&= \p(A,G).\qedhere
\end{align*}
\end{proof}

\begin{question}
Under what hypotheses does equality hold in \eqref{eq.ttensor}?
\end{question}

\begin{example}
\label{ex.ttensor2}
Let $A=\kk_{-1}[y_1,y_2,y_3,y_4]$ and $B=\kk_{-1}[z_1,z_2,z_3,z_4]$.
Let $\tau$ be as in Example \ref{ex.ttensor1}.
Then $A \tensor_\tau B \iso V_8$.
Let $G$ be the subgroup of $\cS_4$ generated by $(1~2~3~4)$.
Then $G$ acts on both $A$ and $B$ and so $\p(A,G) = \p(B,G) = 4$
by Theorem \ref{thm.bhzineq}.
By Theorems \ref{thm.main} and \ref{thm.tensor} we have
\[ 2 \leq \p(V_8,\{(1~2~3~4),(5~6~7~8)\}) = \p(A \tensor_\tau B, G \times G) \leq \p(A,G) = 4.\]
\end{example}

\section{The down-up algebra
\texorpdfstring{$A(-2,-1)$}{A(-2,-1)}}
\label{sec.special}

In this section, we demonstrate that our methods can
be applied to certain groups of weighted permutations.
Bao, He, and Zhang proved that the Auslander map is an isomorphism for noetherian
$A(\alpha,\beta)$ and any finite group of graded automorphisms when $\beta \neq -1$,
and also for the case $A(2,-1)$ \cite[Theorem 0.6]{BHZ2}. Here we prove that for any group acting 
on the down-up algebra $A(-2,-1)$ the Auslander map is an isomorphism
(Theorem \ref{thm.special}) solving
one of the remaining open cases.

Recall that $A=A(-2,-1)$ is the $\kk$-algebra generated by 
$x$ and $y$ subject to the two cubic relations
$x^2y + yx^2 + 2xyx = xy^2 + y^2x + 2yxy = 0$.
The element $z=xy+yx$ is central in $A$.

By \cite[Proposition 1.1]{KK}, 
\[ \Aut_{\gr}(A) = \left\lbrace
\begin{bmatrix}
a_{11} & 0 \\ 0 & a_{22}
\end{bmatrix},
\begin{bmatrix}
0 & a_{12} \\ a_{21} & 0
\end{bmatrix}
: a_{11},a_{12},a_{21},a_{22} \in \kk^\times
\right\rbrace.
\]

There exists a filtration $\cF=\{F_n\}$ defined by $F_n A  = (\kk \oplus \kk x \oplus \kk y \oplus \kk z)^n \subset A$
for all $n \geq 0$. The associated graded ring $\gr_\cF(A) \iso V_3$ \cite[Lemma 7.2(2)]{KKZ3}.
Note that $\cF$ is stable with respect to the action of $\Aut_{\gr}(A)$ on $A$. 
Moreover, $V_3$ is a connected graded algebra and the group of automorphisms of $V_3$ induced by the filtration is
\[ G := \left\lbrace
\begin{bmatrix}
a & 0 & 0 \\ 0 & b & 0 \\ 0 & 0 & ab
\end{bmatrix},
\begin{bmatrix}
0 & c & 0 \\ d & 0 & 0 \\ 0 & 0 & cd
\end{bmatrix} :
a,b,c,d \in \kk^\times
\right\rbrace.\]
As a $G$-module, $A \iso V_3$ so the $G$-action is inner faithful and homogeneous.
Let $H$ be a finite subgroup of $\Aut_{\gr}(A)$.
We identify $H$ with the corresponding subgroup of $G$,
which, by an abuse of notation, we also call $H$.
By \cite[Proposition 3.6]{BHZ2},
$\p(A,H) \geq \p(V_3,H)$.
Hence, it suffices to prove $\p(V_3,H) \geq 2$.

There are a few special cases of such groups.
If $H$ is diagonal, then the Auslander map is an isomorphism
by \cite[Theorem 5.5]{BHZ2}.
In particular, $H$ is small in the commutative sense
and the action of $H$ commutes with the graded twist
sending $V_3$ to $\kk[x_1,x_2,x_3]$.
Also, if $H$ is small (in the commutative sense) when acting on 
$T=\kk[y_1,y_2,y_3] \subset C(V_3)$ where $y_i = x_i^2 \in C(V_3)$, 
then the Auslander map is an isomorphism by classical results.

Set $f = \sum_{h \in H} 1 \# h$ and write $H = H_d \cup H_t$
where the $H_d$ are diagonal and the $H_t$ are not.
Denote elements of the first type as $M(a,b)$ and elements of the
second type as $N(c,d)$.

Let $M =\diag(a,b,ab) \in H_d$.
If two of $a,b,ab$ are equal to $1$ then so is the third.
Hence, we conclude that for every nonidentity element of $H_d$,
at least two of the entries are not $1$.
Define $S_t = \{ h_{2,1} : h \in H_t \}$, 
$S_d  = \{ h_{1,1} : h \in H_d, h_{1,1}\neq 1 \}$, and
$S_d' = \{ h_{2,2} : h \in H_d, h_{1,1}= 1, h_{2,2} \neq 1 \}$.

\begin{lemma}
\label{lem.spvdm}
The element
$\displaystyle V = x_1^{2|S_d|} x_2^{2|S_d'|} \prod_{d \in S_t}(x_1^2 - d^{-2}x_2^2) 
\in (f) \cap T$.
\end{lemma}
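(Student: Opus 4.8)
The plan is to mimic the strategy used in the proof of Lemma~\ref{lem.elts} and Proposition~\ref{prop.dim}, but carefully tracking the contribution of the diagonal elements of $H$ as well as the off-diagonal (weighted transposition) elements. The starting point is again $f = \sum_{h \in H} 1 \# h \in (f)$, and the goal is to repeatedly multiply $f$ on the left and right by central elements $x_\ell^2 \in C(V_3)$ to kill all the group components except the identity, ending up with an element of $(f) \cap T$ of the stated shape. Concretely, for a diagonal element $M(a,b) = \diag(a,b,ab)$ acting by $x_i \mapsto (\text{scalar})\, x_i$, we have $x_\ell^2 \cdot (1\#M) - (1\#M)\cdot x_\ell^2 = (1 - M(x_\ell)^2/x_\ell^2)\, x_\ell^2 \# M$ — a scalar multiple of $x_\ell^2 \# M$ — and this scalar is nonzero exactly when $M$ acts nontrivially on $x_\ell^2$. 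So for each nonidentity $M \in H_d$ with $M_{1,1} \neq 1$, one application of the operator $p \mapsto x_1^2 p - p x_1^2$ contributes a factor of $x_1^2$ to the surviving identity component and annihilates the $M$-component; for each $M \in H_d$ with $M_{1,1} = 1$ but $M_{2,2} \neq 1$ (note by the paragraph preceding the lemma such an $M$ must then also have $M_{3,3} \neq 1$, but we only need the $x_2^2$ factor), one application of $p \mapsto x_2^2 p - p x_2^2$ contributes a factor $x_2^2$ and kills that component. This accounts for the $x_1^{2|S_d|} x_2^{2|S_d'|}$ prefactor.

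For the off-diagonal elements $N(c,d) \in H_t$, the action swaps (up to scalars) $x_1$ and $x_2$, so conjugating-type operations behave differently: one needs to use that $x_1^2 p - p\,(\text{something})$ for an off-diagonal $N$ produces a factor of the form $x_1^2 - d^{-2} x_2^2$ (the precise scalar $d^{-2}$ coming from $N(x_2)^2 = d^2 x_1^2$, so $N^{-1}$ sends $x_1^2$ to $d^{-2} x_2^2$). This is the analogue of the $(x_a^2 - x_b^2)$ factors in Lemma~\ref{lem.elts}, but twisted by the weights. Applying, for each $N \in H_t$, the appropriate operator $p \mapsto x_1^2 p - p\,x_1^2$ (or a variant with $N$-adjusted right multiplier) contributes the factor $(x_1^2 - d^{-2} x_2^2)$ and annihilates the $N$-component, which is where the product $\prod_{d \in S_t}(x_1^2 - d^{-2} x_2^2)$ comes from. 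Carrying out all of these operations in sequence — first the diagonal ones, then the off-diagonal ones, or in any order since the $x_\ell^2$ are central — leaves an element whose only surviving component is $\#e$, and that component is exactly $V = x_1^{2|S_d|} x_2^{2|S_d'|} \prod_{d \in S_t}(x_1^2 - d^{-2} x_2^2)$. Writing $V$ in terms of $y_i = x_i^2$ shows $V \in T$, and by construction $V \# e \in (f)$, so $V \in (f) \cap T$.

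The main technical obstacle I expect is bookkeeping: one must check that the weighted transpositions $N(c,d)$ really do get annihilated by the proposed operators, i.e.\ that no off-diagonal $N$ fixes both $x_1^2$ and $x_2^2$ (which would require $cd = \pm 1$ combined with the wrong sign — one needs to use that $N^2$ is diagonal and the structure of $H$ as a subgroup of the explicit group $G$ displayed above to rule out degeneracies), and that distinct $N, N' \in H_t$ do not interfere with each other's annihilation. There is a subtlety in that the index set $S_t = \{h_{2,1} : h \in H_t\}$ is taken as a \emph{set} of scalars $d$, so if two distinct off-diagonal elements share the same $(2,1)$-entry $d$, we still get only one factor $(x_1^2 - d^{-2} x_2^2)$ in $V$ — one must confirm that this single factor is enough to kill both components, which follows because once the $N$-component has been zeroed out by one application of the operator, subsequent operations preserve that it stays zero. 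A second, more routine point is to verify the scalar $d^{-2}$ is correct: from the explicit form of $G$, an element $N(c,d)$ sends $x_1 \mapsto d x_2$ and $x_2 \mapsto c x_1$ (reading off the matrix), so $N^{-1}(x_1^2) = d^{-2} x_2^2$ — this is a direct computation but worth stating carefully. Once these points are pinned down, the conclusion $V \in (f) \cap T$ is immediate from the iterative construction.
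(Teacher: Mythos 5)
Your overall plan (start from $f$ and repeatedly multiply on the left and right so as to zero out the nonidentity components one at a time, leaving $V\#e$) is the same as the paper's, and your treatment of the off-diagonal elements --- once the ``$N$-adjusted right multiplier'' is made explicit, i.e.\ the operator $p \mapsto x_1^2 p - d^{-2} p x_2^2$ --- is exactly the paper's first stage. The diagonal stage, however, does not work as you describe it, and your own displayed formula shows why: since left and right multiplication act componentwise, the commutator $p \mapsto x_\ell^2 p - p x_\ell^2$ sends a component $a\#\sigma$ (with $a$ central) to $a\,(x_\ell^2 - \sigma(x_\ell^2))\#\sigma$, so it multiplies the identity component by $x_\ell^2 - x_\ell^2 = 0$ and multiplies the $M$-component by the \emph{nonzero} element $(1 - M_{1,1}^2)\,x_1^2$ whenever $M_{1,1}^2 \neq 1$. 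That is the opposite of your claim that it ``contributes a factor of $x_1^2$ to the surviving identity component and annihilates the $M$-component''; after one such commutator the identity component is zero and stays zero, so the procedure can never output a nonzero $V\#e$.

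Replacing it by the adjusted operator $p \mapsto x_1^2 p - M_{1,1}^{-2}\, p\, x_1^2$ does not repair the gap either, because $H_d$ may contain elements such as $\diag(-1,-1,1)$ (allowed in $G$, and its $(1,1)$-entry $-1$ lies in $S_d$), which fix every $x_\ell^2$ and hence fix $T$ pointwise. If you only ever multiply by elements of $T$, the $e$-component and the $\diag(-1,-1,1)$-component transform identically ($a \mapsto t a t'$), and since they are both equal to $1$ in $f$ they remain equal forever; no sequence of such operations can kill that component while keeping the identity component nonzero. This is precisely why the paper's second stage uses degree-one multipliers: the operator $q \mapsto x_1(x_1 q - a_k^{-1} q x_1)$ kills every diagonal component with $(1,1)$-entry $a_k$ (including $a_k = -1$), because the relevant eigenvalue is $a_k$ rather than $a_k^2$, while tagging the identity component with the nonzero central factor $(1-a_k^{-1})x_1^2$; the analogous operator with $x_2$ handles $S_d'$. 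With your diagonal-stage operators replaced by these (and the off-diagonal stage kept as you intended), the bookkeeping points you raise (repeated $(2,1)$-entries, components staying dead once killed) go through as you say, and the argument matches the paper's proof up to nonzero scalars.
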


\begin{proof}
Set $p_0 = f$.
Enumerate the elements of $S_t$: $d_1,\hdots,d_n$.
Inductively define $p_k = x_1^2p_{k-1} - d_k^{-2}p_{k-1}x_2^2$.
Set $\widehat{V} = p_n$. 
All components corresponding to $H_t$ are now zero in $\widehat{V}$
and the coefficient of the identity component is $\prod_{d \in S_t}(x_1^2 - d^{-2}x_2^2)$.

Enumerate the elements of $S_d$: $a_1,\hdots,a_\ell$,
and the elements of $S_d'$: $b_1,\hdots,b_r$.
Set $q_0 = \widehat{V}$ and inductively define
\[ q_k = \begin{cases}
x_1(x_1q_{k-1}-a_k\inv q_{k-1}x_1) & 0 \leq k \leq \ell \\
x_2(x_2q_{k-1}-b_{k-\ell}\inv q_{k-1}x_2) & \ell+1 \leq k \leq \ell+r.
\end{cases}\]
Then $V = q_{r+\ell}$.
\end{proof}

\begin{lemma}
\label{lem.sprel}
There exists $\mu \in (f) \cap T$ such that $V$ and $\mu$ are relatively prime.
\end{lemma}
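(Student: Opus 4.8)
The strategy is to exploit the freedom in choosing which variable to "twist against" in the recursive construction, exactly as in Lemma~\ref{lem.spvdm}, but now symmetrically in $x_1,x_2,x_3$. Recall that $V = x_1^{2|S_d|} x_2^{2|S_d'|} \prod_{d \in S_t}(x_1^2 - d^{-2}x_2^2)$, so in the polynomial ring $T = \kk[y_1,y_2,y_3]$ with $y_i = x_i^2$ we have $V = y_1^{|S_d|} y_2^{|S_d'|} \prod_{d \in S_t}(y_1 - d^{-2} y_2)$, a product of irreducible linear forms supported only on $y_1$ and $y_2$. The plan is to run the same argument as Lemma~\ref{lem.spvdm} but building in a dependence on $y_3$: whenever we eliminate a component of $f$ coming from $H_t$, instead of always multiplying by $x_1^2$ on the left and $x_2^2$ on the right, we can sometimes multiply by $x_3^2$, producing a factor of the form $(y_3 - e\, y_i)$ for the appropriate $i$ and scalar $e$. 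Similarly the diagonal elements of $H_d$ can be cleared using $x_3^2$-multiplications, contributing powers of $y_3$. The upshot is an element $\mu \in (f) \cap T$ which is a product of irreducible linear forms, all of which involve $y_3$ nontrivially, and hence share no common irreducible factor with $V$ (whose factors live in $\kk[y_1,y_2]$).

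**Key steps, in order.** First, re-examine the proof of Lemma~\ref{lem.spvdm} and observe that the only property used is that $x_\ell^2 \in C(V_3)$ for each $\ell$, so the same recursion works verbatim with $x_3^2$ (equivalently $y_3$) playing the role previously played by $x_1^2$ or $x_2^2$. Second, for each $h \in H_t$, examine how $h$ permutes the $x_i$: since $h$ is a weighted transposition of $x_1$ and $x_2$ fixing $x_3$ (in $G$ the off-diagonal elements are $N(c,d)$, which swap the first two coordinates and scale $x_3$), when we form $x_3^2 p - (\text{scalar})\, p\, x_3^2$ and $h$ fixes the $x_3$-slot up to a scalar $h_{3,3} = cd$, this clears the $h$-component and leaves a factor $(y_3 - (h_{3,3})^{-1} y_3)$ or, more usefully, applying the twist against a mixed pair yields $(y_1 - (h_{3,3})^{-1} y_3)$-type factors — one must check which products $x_i^2 p - \lambda\, p\, x_j^2$ actually kill a given off-diagonal component and record the resulting linear factor in $y$. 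Third, handle the diagonal part $H_d$: for $M(a,b) = \diag(a,b,ab)$, clearing its component via $x_3(x_3 q - (ab)^{-1} q x_3)$ contributes a factor $y_3$. Since by the remark preceding Lemma~\ref{lem.spvdm} every nonidentity diagonal element has at least two entries $\neq 1$, one can always arrange at least one such clearing step to produce a genuine $y_3$ factor (or a linear form involving $y_3$). Fourth, assemble $\mu$ as the resulting identity-component polynomial, verify $\mu \in (f) \cap T$ by the same "multiply by $x_1$ on left, add/subtract" trick used to pass from $f_{i,j}$ to $\hf_{i,j}$, and conclude that $\mu$ and $V$ are relatively prime since $T$ is a UFD, $V \in \kk[y_1,y_2]$, and every irreducible factor of $\mu$ involves $y_3$.

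**Main obstacle.** The delicate point is the \emph{degenerate case} where $H_t$ and $H_d$ conspire so that the natural $y_3$-involving factors all collapse to scalars — for instance if every off-diagonal element $N(c,d)$ has $cd = 1$, so that clearing it against $x_3^2$ gives the factor $(y_3 - y_3) = 0$ rather than a useful linear form, forcing one to instead twist a mixed pair $x_3^2 p - \lambda\, p\, x_1^2$ and check that this still annihilates the component; and the companion subtlety that $\mu$ must be genuinely nonzero and not a unit, i.e.\ that at least one $y_3$-factor survives. Resolving this requires a careful case analysis: if $H$ contains no element moving $x_3$ in a "scaling-nontrivial" way, one should instead observe that $H$ then stabilizes $\kk x_3$ with trivial weight, so $x_3^2 \in A^H$, or relabel the indices so that the roles of $x_1, x_2, x_3$ are permuted to avoid the degeneracy. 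I expect the bulk of the write-up to be this bookkeeping — tracking precisely which linear form in $y_1, y_2, y_3$ each elimination step produces and confirming that, across all cases, at least one of them escapes $\kk[y_1, y_2]$ — rather than any conceptually new idea beyond the recursive construction already present in Lemma~\ref{lem.spvdm}.
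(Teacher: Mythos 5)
Your plan hinges on producing a single $\mu$ all of whose irreducible factors involve $y_3$, by clearing the components of $f$ with twists against $x_3^2$. That premise fails, and not only in fringe cases. An off-diagonal component $N(c,d)$ can be cleared by a central twist $p\mapsto x_3^2p-\lambda p x_3^2$ only with $\lambda=(cd)^{-2}$, and if $(cd)^2=1$ this choice kills the identity component as well, so the construction returns $0$; your proposed repair, $x_3^2p-\lambda p x_1^2$, does not annihilate the component at all, since for off-diagonal $h$ one has $h(x_1^2)\in\kk x_2^2$, so the new coefficient $c_h(x_3^2-\lambda h(x_1^2))$ is never zero in the domain $V_3$. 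Likewise a diagonal component $\diag(a,a^{-1},1)$ has third entry $1$ and cannot be cleared by any $x_3$-twist without killing the identity; clearing it with an $x_1$- or $x_2$-twist introduces a factor $y_1$ or $y_2$, which may divide $V$. In fact for the most basic example, $H=\{e,(1~2)\}$ (the plain transposition $N(1,1)$), your target is provably unattainable: writing $\mu\#e=\sum_i(a_i\#e)f(b_i\#e)$ gives $\mu=\sum_i a_ib_i$ with $\sum_i a_i\sigma(b_i)=0$, and since $\sigma$ induces the identity on $V_3/(x_1-x_2)$, every element of $(f)\cap A$ lies in the two-sided ideal $(x_1-x_2)$; hence every $\mu\in(f)\cap T$ satisfies $\mu(0,0,y_3)=0$, i.e.\ $\mu\in(y_1,y_2)$, so $\mu$ has an irreducible factor in the prime ideal $(y_1,y_2)$ and cannot be a product of forms such as $y_3$ or $y_3-e\,y_i$. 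Your remaining fallbacks (relabelling variables, or noting $x_3^2\in A^H$) do not help: the group $G$ does not treat $x_3$ symmetrically with $x_1,x_2$ (it comes from the central element $z$ of the down-up algebra), the polynomial $V$ is fixed, and $x_3^2\in A^H$ yields no element of $(f)\cap T$.

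What is missing is precisely the hard content of the paper's argument. The paper does not build one product with $y_3$ in every factor; it builds, for each irreducible factor $v$ of $V$, an element $\mu_v\in(f)\cap T$ that is not divisible by $v$ but is divisible by the other factors of $V$ coprime to $v$, and then takes $\mu=\sum_v\mu_v$ (this summation structure is what lets the factors of $\mu_v$ include $y_1$ or $y_2$ when $v$ is the other coordinate). For $v$ a power of $y_1$ or $y_2$ this is indeed your kind of recursion with the roles of the diagonal entries shifted, but for $v=y_1-d^{-2}y_2$ the residual components $M(a,a^{-1})$ and $N(\pm d^{-1},\pm d)$ cannot be removed by central $x_i^2$-twists at all; the paper eliminates them with degree-one, non-central manipulations (e.g.\ $x_1q-d^{-1}qx_2$ followed by $x_2q-dqx_1$), producing coefficients such as $2x_2x_1-ad(x_1^2+(ad)^{-2}x_2^2)$ that become central only after further steps and avoid the factor $v$ for a different reason. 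Without an argument of this kind your proposal does not cover the degenerate cases, which include groups as simple as $\grp{(1~2)}$ itself.
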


\begin{proof}
We claim that for each factor $v \in V$, there exists $\mu_v \in (f) \cap T$
such that $v \nmid \mu_v$ and $u \mid \mu_v$
for all other factors $u$ of $V$ relatively prime to $v$.  
It then follows that $V$ and
$\mu=\sum_v \mu_v$ are relatively prime.

Suppose $v=x_1^{2r}$ and let $\widehat{V}$ be as in Lemma \ref{lem.spvdm}.
The proof follows almost identically to that lemma after replacing $S_d,S_d'$ with
$\hat{S}_d  = \{ h_{2,2} : h \in H_d, h_{2,2}\neq 1 \}$ and
$\hat{S}_d' = \{ h_{3,3} : h \in H_d, h_{2,2}= 1, h_{3,3} \neq 1 \}$.
Then $\gamma_v = q_{r+s} \in (f) \cap T$.
The proof is similar when $v=x_2^{2(\ell-r)}$.

Suppose $v = (x_1^2 - d^{-2}x_2^2)$ for some $d$.
Define $H_t'$ as the set of $h \in H_t$ that are killed
by $d^{-2}$ and not by any other constant.
We may assume $H_t' \neq \emptyset$.
Set $p_0 = f$.
Enumerate the elements of $S_t$: $d=d_1,\hdots,d_n$, and assume $d=d_n$.
Inductively define $p_k = x_1^2p_{k-1} - d_k^{-2}p_{k-1}x_2^2$.
Hence, all components in $p_{n-1}$ corresponding to $H_t$ 
are zero except those in $H_t'$.

Let $\tilde{S} = \{ h_{3,3} : h \in H_d \cup H_t', h_{3,3} \neq 1\}$.
Enumerate the elements of $\tilde{S}:\beta_1,\hdots,\beta_m$.
Set $q_0=p_{n-1}$. Inductively define 
$q_k = x_3(x_3q_{k-1}-\beta_k\inv q_{k-1}x_3)$.
Hence, the only remaining nonzero components in $q_m$ 
have one of the following forms ($d$ fixed, $a$ arbitrary):
$M_a = M(a,a\inv)$, $N_1 = N(d\inv,d)$, $N_2 = N(-d\inv,-d)$.

Consider $q = x_1q_m-d\inv q_mx_2$, which kills the $N_1$ component. 
The coefficient in the $M_a$ component is $(x_1-(ad)\inv x_2)$,
and the coefficient in the $N_2$ component is $2x_1$.
Set $r_0 = x_2q-dqx_1$.
Since $(x_2)(x_1)-d(x_1)(-d\inv x_2) = x_2x_1+x_1x_2=0$,
then the $N_2$ component is zero in $r_0$.
What remains in the $M_a$ component is
$2x_2x_1 - ad( x_1^2 + (ad)^{-2} x_2^2)$.

Enumerate the distinct $(1,1)$-entries of the $M_a$ components
whose coefficient is not zero: $a_1,\hdots,a_n$, with $a_1=1$. 
Inductively define 
\[ 
r_k = \begin{cases}
x_1(x_1 r_{k-1} + a_k\inv r_{k-1} x_1) & k \text{ odd} \\
x_2(x_2 r_{k-1} + a_k\inv r_{k-1} x_2) & k \text{ even}.
\end{cases}
\]
All coefficients in $r_n$ are central and it remains only 
to kill the remaining nonidentity $M_a$ components.
For this we can proceed as in Lemma \ref{lem.spvdm}.
\end{proof}

\begin{theorem}
\label{thm.special}
Let $A = A(-2,-1)$. The Auslander map $\gamma_{A,G}:A\#G \rightarrow \End_{A^G}(A)$ is
a graded isomorphism for any finite subgroup $G$ of $\Aut_{\gr}(A)$.
\end{theorem}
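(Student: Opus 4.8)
The plan is to reduce, as the discussion preceding the theorem already does, from $A = A(-2,-1)$ to the associated graded ring $\gr_\cF(A) \iso V_3$: since the filtration $\cF$ is stable under $\Aut_{\gr}(A)$, a finite subgroup $H \leq \Aut_{\gr}(A)$ induces an action on $V_3$ realized by the explicit matrix group $G$ of weighted permutations, and by \cite[Proposition 3.6]{BHZ2} one has $\p(A,H) \geq \p(V_3,H)$. So it suffices to prove $\p(V_3,H) \geq 2$ for every finite subgroup $H \leq G$, and then invoke Theorem \ref{thm.bhz} (noting $V_3$ and hence $A$ is AS regular, noetherian, CM of GK dimension $3$). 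The strategy for the pertinency bound mirrors Section \ref{sec.elts}: pass to the central polynomial subring $T = \kk[y_1,y_2,y_3] \subset C(V_3)$ with $y_i = x_i^2$, exhibit two relatively prime elements of $(f) \cap T$ where $f = \sum_{h \in H} 1 \# h$, conclude that the grade of the ideal they generate in $T$ is at least $2$, hence $\GKdim T/((f)\cap T) \leq 1$, and then use \cite[Lemma 5.2]{BHZ1} (as in the proof of Theorem \ref{thm.main}) to get $\GKdim (V_3 \# H)/(f) \leq 1$ and therefore $\p(V_3,H) \geq 3 - 1 = 2$.

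The two relatively prime elements are precisely the outputs of Lemmas \ref{lem.spvdm} and \ref{lem.sprel}: $V \in (f) \cap T$ and $\mu \in (f) \cap T$ with $\gcd(V,\mu) = 1$. So the body of the proof is short: first dispose of the already-handled special cases (if $H$ is diagonal, apply \cite[Theorem 5.5]{BHZ2}; if $H$ acts on $T$ as a small group in the classical sense, apply the classical Auslander/Watanabe-type results), so that we may assume neither holds. In the remaining case, $V$ and $\mu$ from the two lemmas form a regular sequence in the (Cohen-Macaulay, in fact polynomial) ring $T$, so $\grade((V,\mu)) \geq 2$; by the depth/dimension inequality $\GKdim T/(V,\mu) \leq 1$. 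Since $(V) + (\mu) \subseteq (f) \cap T$, we get $\GKdim T/((f)\cap T) \leq 1$ a fortiori. Letting $T'$ be the image of $T$ in $(V_3 \# H)/(f)$ and $A'$ the image of $V_3$, \cite[Lemma 5.2]{BHZ1} gives $\GKdim T' = \GKdim A' = \GKdim (V_3\#H)/(f)$, and $\GKdim T' \leq \GKdim T/((f)\cap T) \leq 1$. Hence $\p(V_3,H) = 3 - \GKdim(V_3\#H)/(f) \geq 2$, and Theorem \ref{thm.bhz} finishes the argument; the isomorphism is graded because $\gamma_{A,G}$ is a graded map and everything in sight is graded.

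One point that needs a remark rather than a calculation: Lemmas \ref{lem.spvdm} and \ref{lem.sprel} are stated for the induced group; we should note that when $H$ is \emph{not} covered by the special cases, the sets $S_t, S_d, S_d'$ used to build $V$ are not all empty in the relevant way — in particular $V$ is a nonzero element of $T$ whose irreducible factors are exactly $x_1, x_2$, and the linear forms $x_1^2 - d^{-2}x_2^2$ for $d \in S_t$ — so that Lemma \ref{lem.sprel}'s factor-by-factor construction of $\mu_v$ makes sense and yields $\mu = \sum_v \mu_v$ sharing no common factor with $V$. The main obstacle in the whole argument is really the verification hidden inside Lemma \ref{lem.sprel}: tracking which group components survive the successive skew-commutator operations $p_k = x_1^2 p_{k-1} - d_k^{-2} p_{k-1} x_2^2$ (and their $x_3$-analogues, and the degree-one variants $r_k$ used to handle the transposition-type elements $N(c,d)$), and checking that one really can annihilate every nonidentity component while keeping the identity-component coefficient divisible by all factors of $V$ except the chosen $v$. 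Everything downstream of the two lemmas — the regular-sequence/grade estimate and the GKdim bookkeeping — is routine and parallels Theorem \ref{thm.main} essentially verbatim.
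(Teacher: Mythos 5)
Your proposal is correct and follows the paper's own argument essentially verbatim: reduce to $V_3$ via the filtration and \cite[Proposition 3.6]{BHZ2}, use Lemmas \ref{lem.spvdm} and \ref{lem.sprel} to get two relatively prime elements of $(f) \cap T$, and then run the regular-sequence/GK-dimension argument of Proposition \ref{prop.dim} and Theorem \ref{thm.main} to get $\p(V_3,H)\geq 2$ before invoking Theorem \ref{thm.bhz}. The separate disposal of the diagonal and classically-small cases is harmless but unnecessary, since the two lemmas cover every finite subgroup $H$.
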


\begin{proof}
As noted previously, it suffices by \cite[Proposition 3.6]{BHZ2} 
to prove this for $V_3$ and the corresponding group $H$ acting on $V_3$.
By Lemmas \ref{lem.spvdm} and \ref{lem.sprel},
$(f) \cap T$ contains two relatively prime elements
(see Proposition \ref{prop.dim}).  We may now
proceed as in Theorem \ref{thm.main}.
\end{proof}

We observe that by \cite[Proposition 6.4]{KKZ1}, 
the group $G$ in Theorem \ref{thm.special} contains no reflections.

\section{Graded isolated singularities}
\label{sec.sing}

Mori and Ueyama proved that if the Auslander map is an isomorphism, then $A^G$ is a graded isolated 
singularity if and only if $A\#G/(f_G)$ is finite-dimensional \cite[Theorem 3.10]{MU}. Thus, we have shown 
that $V_4^G$ is a graded isolated singularity when
$G=\grp{(1~2)(3~4)}$ (Proposition \ref{prop.frank}) or $\grp{(1~2)(3~4), (1~3)(2~4)}$
(Proposition \ref{prop.rob}).  In this section we present 
additional examples of graded isolated singularities using our methods. For these examples, we
therefore obtain the conclusions of Theorem~\ref{thm.isol} as a corollary.

\subsection{Generic three-dimensional Sklyanin algebras}

Let $A=S(a,b,c)$ for a generic choice of $a,b,c \in \kk$.
Then $C_3 = \langle \sigma\rangle $ acts on $A$ with $\sigma = (1~2~3)$
permuting the variables.  We may diagonalize this action by choosing a different generating 
set of the algebra.  Indeed, let $\xi$ be a primitive third root of unity, and set
$X = x_1 + \xi x_2 + \xi^2 x_3$, $Y = x_1 + \xi^2 x_2 + \xi x_3$ and $Z = x_1 + x_2 + x_3$.  
Then $A$ is also generated by $X,Y$ and $Z$.  To describe the relations that $X,Y$
and $Z$ satisfy, we have the following lemma:

\begin{lemma}
In the notation of the previous paragraph, the generators $X,Y,Z$ satisfy Sklyanin
relations with parameters
$$(\alpha,\beta,\gamma) := (c + \xi a + \xi^2 b, c + \xi^2 a + \xi b, c + a + b)$$
\end{lemma}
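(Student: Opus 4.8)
The plan is to verify the claimed Sklyanin relations by a direct change-of-coordinates computation, organized so as to minimize brute force. First I would record the inverse change of basis: since $\xi^3 = 1$ and $1 + \xi + \xi^2 = 0$, one has $x_1 = \tfrac13(X + Y + Z)$, $x_2 = \tfrac13(\xi^2 X + \xi Y + Z)$, and $x_3 = \tfrac13(\xi X + \xi^2 Y + Z)$. The key structural observation is that the cyclic permutation $\sigma = (1~2~3)$ acts diagonally in the new coordinates: $\sigma(X) = \xi^2 X$, $\sigma(Y) = \xi Y$, $\sigma(Z) = Z$ (up to the obvious relabeling, depending on orientation conventions). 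This $\ZZ/3$-grading on the degree-one part, hence on all of $A$, is what makes the computation tractable: each of the three defining relations of $S(a,b,c)$ is $\sigma$-homogeneous, and so is each candidate relation among $X,Y,Z$, so only relations lying in matching isotypic components can appear.

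The main step is then to compute, in the free algebra on $X, Y, Z$ modulo the three Sklyanin relations of $S(a,b,c)$, the three quadratic expressions
\[
\alpha XY + \beta YX + \gamma Z^2,\qquad \alpha YZ + \beta ZY + \gamma X^2,\qquad \alpha ZX + \beta XZ + \gamma Y^2,
\]
and show each vanishes. I would substitute $X = x_1 + \xi x_2 + \xi^2 x_3$, etc., expand, and repeatedly reduce using the three given relations $a x_i x_{i+1} + b x_{i+1} x_i + c x_{i+2}^2 = 0$ (indices mod $3$). The coefficients $\alpha = c + \xi a + \xi^2 b$, $\beta = c + \xi^2 a + \xi b$, $\gamma = c + a + b$ are precisely the "discrete Fourier transform" of $(c, a, b)$ along the cyclic group, which is exactly why they are the right parameters: the relation $a x_i x_{i+1} + b x_{i+1} x_i + c x_{i+2}^2 = 0$ becomes, after Fourier transform, a relation of the shape $\alpha(\cdot) + \beta(\cdot) + \gamma(\cdot)^2 = 0$ in the new variables. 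Concretely, I expect the cleanest route is to expand $\alpha XY + \beta YX + \gamma Z^2$ as a $\kk$-linear combination of the nine monomials $x_i x_j$, collect coefficients, and recognize the result as a $\kk$-linear combination of the three Sklyanin relations (with coefficients that are powers of $\xi$), hence zero in $A$.

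The main obstacle is purely the bookkeeping: one must track the twelve products of roots of unity arising in each of nine monomial coefficients, for each of three relations, and confirm that the nine-term expressions collapse onto the span of the three original relations. There is no conceptual difficulty — it is forced by the $\ZZ/3$-equivariance — but it is error-prone, so I would exploit symmetry: once the first relation $\alpha XY + \beta YX + \gamma Z^2 = 0$ is verified, the other two follow by applying $\sigma$ (which cyclically permutes $X \to \xi^2 X$, $Y \to \xi Y$, $Z \to Z$ and the roles of the three Sklyanin relations), so only one of the three computations is genuinely independent. I would present that single computation and remark that the other two are obtained by the evident cyclic symmetry.
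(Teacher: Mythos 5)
Your main computation coincides with the paper's: expand each candidate relation in the monomials $x_ix_j$ and exhibit it as a $\kk$-linear combination of the original relations $f_1,f_2,f_3$; the paper simply records the explicit identities $F_1=\tfrac13(f_1+f_2+f_3)$, $F_2=\tfrac13(\xi f_1+f_2+\xi^2 f_3)$, $F_3=\tfrac13(\xi^2 f_1+f_2+\xi f_3)$. That part of your plan is sound.

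The gap is in your proposed symmetry shortcut. You intend to verify only $\alpha XY+\beta YX+\gamma Z^2=0$ and obtain the other two relations ``by applying $\sigma$.'' This fails: in the new coordinates $\sigma$ acts diagonally ($\sigma(X)=\xi^2X$, $\sigma(Y)=\xi Y$, $\sigma(Z)=Z$), so each of the three candidate relations is a $\sigma$-eigenvector; indeed $\sigma$ sends $\alpha XY+\beta YX+\gamma Z^2$ to itself, $\alpha YZ+\beta ZY+\gamma X^2$ to $\xi$ times itself, and $\alpha ZX+\beta XZ+\gamma Y^2$ to $\xi^2$ times itself. Applying $\sigma$ therefore reproduces the relation you already have and never yields the other two, so as written your argument proves only one of the three required relations. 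The shortcut can be repaired by using a different symmetry: the diagonal map $\tau$ with $\tau(x_1)=x_1$, $\tau(x_2)=\xi x_2$, $\tau(x_3)=\xi^2 x_3$ scales $f_1,f_2,f_3$ by $\xi,1,\xi^2$ respectively, hence is an automorphism of $S(a,b,c)$, and in the new coordinates it cyclically permutes $X\mapsto Y\mapsto Z\mapsto X$, carrying the first candidate relation to the second and the second to the third. Either substitute $\tau$ for $\sigma$ in your reduction, or simply verify all three identities directly, as the paper does.
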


\begin{proof}
Let $f_1 = axy + byx + cz^2$, $f_2 = ayz + bzy + cx^2$, and $f_3 = azx + bxz + cy^2$.
Define $F_1,F_2$ and $F_3$ similarly in terms of the generators $X,Y$ and $Z$ and the 
parameters $\alpha,\beta$ and $\gamma$.  A direct verification shows that one has the 
following equalities which complete the proof of the lemma:
\begin{align*}
F_1 & = \frac{1}{3}(f_1 + f_2 + f_3) \\
F_2 & = \frac{1}{3}(\xi f_1 + f_2 + \xi^2f_3) \\
F_3 & = \frac{1}{3}(\xi^2f_1 + f_2 + \xi f_3). \qedhere
\end{align*}
\end{proof}

\begin{theorem}
\label{thm.sklyanin}
Let $A$ be a Sklyanin algebra on generators $x,y,z$ with parameters $(a : b : c)$
and let $(\alpha : \beta : \gamma)$ be the coordinates of the Sklyanin algebra
on generators $X,Y,Z$ as in the discussion above.  If $\alpha,\beta,\gamma$, and
$\alpha^3 - \beta^3$ are nonzero and $\sigma$ is the permutation action of
$(x~y~z)$ on $A$, then $A^{\grp{\sigma}}$ is a graded isolated singularity.
\end{theorem}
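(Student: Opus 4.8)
The plan is to reduce the isolated-singularity claim to a finite-dimensionality statement about $A\#\grp{\sigma}/(f)$, via the Mori–Ueyama criterion \cite[Theorem 3.10]{MU}. For that criterion to apply we first need to know the Auslander map is an isomorphism; equivalently, by Theorem \ref{thm.bhz}, that $\p(A,\grp{\sigma})\geq 2$. So the theorem actually has two parts bundled together: (i) the Auslander map is an isomorphism, and (ii) $A\#\grp{\sigma}/(f)$ is finite-dimensional — and (ii) is strictly stronger than (i), since $\dim_\kk < \infty$ forces $\GKdim(A\#\grp{\sigma}/(f)) = 0$ and hence $\p(A,\grp{\sigma}) = \GKdim A = 3 \geq 2$. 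Thus it suffices to prove (ii) alone.

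First I would pass to the diagonalized presentation: by the preceding lemma, $A = S(\alpha,\beta,\gamma)$ on generators $X,Y,Z$, and in these coordinates $\sigma$ acts diagonally — one checks $\sigma(X) = \xi^{-1}X$, $\sigma(Y) = \xi Y$, $\sigma(Z) = Z$ (or the analogous scalars), since $\sigma$ cyclically permutes $x_1,x_2,x_3$ and $X,Y,Z$ are the eigenvectors. Set $f = 1\#e + 1\#\sigma + 1\#\sigma^2$. The goal is to produce enough elements of $(f)\cap A$ to cut $A$ down to a finite-dimensional quotient. The standard device (as in the proofs of Propositions \ref{prop.rob} and \ref{prop.frank}, and in \cite{BHZ1,BHZ2}) is: for a homogeneous element $u \in A$ that is an eigenvector for $\sigma$ with eigenvalue $\lambda \neq 1$, the combination $u f - \lambda^{?} f u'$ for suitable eigenvector $u'$ kills the nonidentity components, landing an element of $(f)\cap A$. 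Concretely, because $\sigma$ is diagonal on $X,Y,Z$, bracketing $f$ against monomials in $X,Y,Z$ and taking appropriate scalar combinations over the three components $e,\sigma,\sigma^2$ should yield, after a short computation, that $X^3, Y^3$, and $Z \cdot(\text{something})$, or more plausibly the central element(s) of $A$ together with $X^3,Y^3,Z^3$, lie in $(f)\cap A$. The hypotheses that $\alpha,\beta,\gamma \neq 0$ and $\alpha^3 - \beta^3 \neq 0$ are exactly the nonvanishing conditions ensuring the relevant Vandermonde-type determinants (the coefficients appearing when one solves the $3\times 3$ linear system over the components) are invertible, so that the bracketing operations actually produce nonzero — hence degree-controlling — elements rather than collapsing to zero.

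The key steps, in order: (1) record the eigenvalues of $\sigma$ on $X,Y,Z$; (2) compute $[X, f]$-type brackets and their $\xi$-twisted analogues to extract elements of $(f)\cap A$ supported only on the identity component; (3) use the generic Sklyanin relations — in particular the known central element of $S(\alpha,\beta,\gamma)$ (a degree-3 element, for generic parameters not a scalar) — to show the ideal generated by these elements, together with the relations of $A$, has finite codimension in $A$; this is where $\alpha^3 \neq \beta^3$ enters to guarantee the cross-terms $X^2Y^2$-type elements or the central cubic are recovered; (4) conclude $\dim_\kk A\#\grp{\sigma}/(f) < \infty$, hence $\p(A,\grp{\sigma}) = 3$, the Auslander map is an isomorphism by Theorem \ref{thm.bhz}, and finally $A^{\grp{\sigma}}$ is a graded isolated singularity by \cite[Theorem 3.10]{MU}.

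The main obstacle I anticipate is step (3): unlike $V_n$, where $x_i^2$ is central and the center contains a polynomial ring making the GK-dimension bookkeeping transparent (Proposition \ref{prop.dim}), the generic Sklyanin algebra $S(\alpha,\beta,\gamma)$ has a more delicate center — it is generated by a single cubic $g$, with $S/(g)$ a twisted homogeneous coordinate ring of an elliptic curve, of GK dimension $2$. So one cannot simply import the commutative Vandermonde argument; instead one must verify directly that the specific elements manufactured in step (2) generate an ideal whose quotient is finite-dimensional, which requires understanding how $X^3, Y^3, Z^3$ and the central cubic interact inside $S(\alpha,\beta,\gamma)$. Making this precise — likely by showing the images of these elements in $S/(g)$ already cut it down to finite dimension, or by a direct Gröbner-basis-style degree argument — is the technical heart of the proof, and the genericity hypotheses are tailored precisely to make it go through.
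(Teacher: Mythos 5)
Your overall architecture is the same as the paper's: prove $\dim_\kk A\#\grp{\sigma}/(f) < \infty$, which simultaneously gives $\p(A,\grp{\sigma}) = 3 \geq 2$ (so the Auslander map is an isomorphism by Theorem \ref{thm.bhz}) and, via \cite[Theorem 3.10]{MU}, that $A^{\grp{\sigma}}$ is a graded isolated singularity; and you correctly begin by diagonalizing $\sigma$ on $X,Y,Z$ and bracketing eigenvectors against $f$ to land elements in $(f)\cap A$. However, there is a genuine gap in the two places where the actual content lies. First, the elements produced by the bracketing device are not $X^3,Y^3,Z^3$ or the central cubic: from $Yf - \xi fY$ and one further twisted bracket one gets $Y^2\#e \in (f)$, and similarly $X^2\#e$ --- and that is all one can get this way, because $Z$ is $\sigma$-invariant, so $Zf - fZ = 0$ and no bracketing can produce an element supported purely on powers of $Z$. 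Your guess that ``$Z\cdot(\text{something})$'' or $Z^3$ or the central element appears at this stage is exactly the point that fails, and your fallback of working in $S/(g)$ (the twisted homogeneous coordinate ring) does not obviously help, since the images of $X^2,Y^2$ there need not cut it to finite dimension in any transparent way.

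Second, you correctly identify step (3) as the technical heart but leave it entirely open, whereas it is precisely where the hypotheses $\alpha,\beta,\gamma \neq 0$ and $\alpha^3-\beta^3 \neq 0$ are used. The paper's argument is: adjoin $X^2$ and $Y^2$ to the Sklyanin relations in the variables $X,Y,Z$ (which simplifies two relations to $\alpha YZ + \beta ZY$ and $\alpha ZX + \beta XZ$), then run the diamond lemma with $X > Y > Z$. The overlap of $\alpha XY + \beta YX + \gamma Z^2$ with $X^2$ reduces to $\bigl(\gamma(\alpha^3-\beta^3)/(\alpha\beta^2)\bigr)Z^2X$, so $Z^2X$ (and similarly $Z^2Y$) lies in the ideal, and a further overlap yields $Z^4$; the quotient is then spanned by the ten monomials $1, X, Y, Z, YX, ZX, ZY, Z^2, ZYX, Z^3$. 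Without some such explicit computation controlling powers of $Z$, the finite-dimensionality claim --- and hence the theorem --- is not established; the nonvanishing conditions are not merely ``Vandermonde-type'' invertibility in solving for the group components (that part needs only $\xi \neq 1$), but are needed for this Gr\"obner-basis step.
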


\begin{proof}
It is easy to check that in our new coordinates, the action of $\sigma$ is given by
$\sigma(X) = \xi^2 X$, $\sigma(Y) = \xi (Y)$ and $\sigma(Z) = Z$.
First, we claim that $Y^2 \# e$ is in the ideal generated by $f = f_G$.
Indeed, one has that
\[Yf - \xi fY = (1 - \xi)Y \# e + (1 - \xi^2)Y \# \sigma,\]
hence $g = Y \# e + (1 - \xi)Y \# \sigma$ is in $(f)$.  
The computation $Yg - \xi^2 gY = (1 - \xi^2)Y^2 \# e$ proves the desired claim.
A similar calculation shows that $X^2 \# e$ is in $(f)$.

Next, let $I$ be the ideal generated by the Sklyanin relations on
$X,Y$ and $Z$, and $X^2,Y^2$.
Using the generators $X^2$ and $Y^2$ we may simplify these relations to:
$$\alpha XY + \beta YX + \gamma Z^2,\alpha YZ + \beta ZY, X^2, \alpha ZX + \beta XZ, Y^2.$$
We use the diamond lemma with $X > Y > Z$ to produce a (partial) Gr\"{o}bner basis of $I$.
In the computations, we will assume that $\alpha,\beta,\gamma$, and 
$\alpha^3-\beta^3$ are nonzero

First, we compute the overlap between $\alpha XY + \beta YX + \gamma Z^2$ and $X^2$. 
In this calculation we use $=$ to denote equality in the
tensor algebra and $\leadsto$ to indicate a reduction has taken place
using the generators of the Gr\"obner basis of the defining ideal that have
been found thus far. We have
\begin{eqnarray*}
X\left(\alpha XY + \beta YX + \gamma Z^2\right) - X^2(\alpha Y) & = & \beta XYX + \gamma XZ^2 \leadsto \\
 \gamma XZ^2 - \frac{\beta^2}{\alpha}YX^2 - \frac{\beta\gamma}{\alpha}Z^2X
   & \leadsto & -\frac{\alpha\gamma}{\beta}ZXZ - \frac{\beta\gamma}{\alpha}Z^2X \leadsto \\
 \left(\frac{\alpha^2\gamma}{\beta^2} - \frac{\beta\gamma}{\alpha}\right)Z^2X
   & = & \left(\frac{\gamma(\alpha^3 - \beta^3)}{\alpha\beta^2}\right)Z^2X.
\end{eqnarray*}
Therefore $Z^2X$ is in the ideal under our assumptions on the parameters.
A similar calculation shows that $Z^2Y$ is in the ideal.
Computing the overlap between $Z^2X$ and $\alpha XY + \beta YX + \gamma Z^2$ shows
that $Z^4$ is in the ideal as well.


Hence, under the hypotheses on the parameters,
the ideal $I$ contains elements with lead terms $X^2$, $XY$, $Y^2$, $YZ$, $XZ$, $Z^2X$, $Z^2Y$, $Z^4$.
Therefore a spanning set of the quotient by the ideal $I$ consists of 
the monomials that do not contain those in the list above as subwords.  
A straightforward calculation shows that such a list of monomials
is given by $1$, $X$, $Y$, $Z$, $YX$, $ZX$, $ZY$, $Z^2$, $ZYX$, $Z^3$.
\end{proof}

\subsection{Weighted action on \texorpdfstring{$V_4$}{V4}}

Consider the following action of the Klein-4 group on $V_4$. 
Let $K = \grp{\alpha,\beta} \subseteq \Aut_{\gr}(A)$ where
$\alpha(x_i)=-x_i$ for all $i$ and $\beta = (1~3)(2~4)$.

\begin{theorem}
\label{thm.weighted}
Let $K$ be the group above. 
Then $\p(V_4,K)=4$ and thus $V_4^K$ is a graded isolated singularity,
as is $V_4^H$ for any subgroup $H$ of $K$.
\end{theorem}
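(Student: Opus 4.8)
The plan is to compute $\p(V_4,K)$ directly by exhibiting enough elements in $(f)\cap A$ (where $A=V_4$ and $f=f_K=\sum_{\sigma\in K}1\#\sigma$) to force $\dim_\kk A/((f)\cap A)<\infty$, which yields $\GKdim (A\#K)/(f)=0$ and hence $\p(V_4,K)=\GKdim A - 0 = 4$. Once $\p(V_4,K)\geq 2$, Theorem \ref{thm.bhz} gives that the Auslander map $\gamma_{V_4,K}$ is an isomorphism, and since $A\#K/(f)$ is finite-dimensional, \cite[Theorem 3.10]{MU} shows $V_4^K$ is a graded isolated singularity. For the subgroups $H\leq K$, I would invoke Theorem \ref{thm.subgrp} to get $\p(V_4,H)\geq\p(V_4,K)=4\geq 2$, so the Auslander map is an isomorphism for each such $H$ as well; the isolated singularity claim for $H=\langle\beta\rangle=\langle(1~3)(2~4)\rangle$ follows from Proposition \ref{prop.rob} (or by the same elementary argument as Proposition \ref{prop.frank} after reindexing), for $H=\langle\alpha\rangle$ note that $V_4^{\langle\alpha\rangle}$ is just the even-degree part, which is noetherian AS regular (a Veronese), and one checks directly that $(f_{\langle\alpha\rangle})\cap A$ contains all $x_i^2$ hence is finite-codimensional, and for $H=\langle\alpha\beta\rangle$ a similar direct computation applies. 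The remaining subgroup $H=\{e\}$ is trivial.

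\textbf{Key steps for $\p(V_4,K)=4$.} First I would note that $\alpha$ acts as $-1$ on each generator, so $1\#e+1\#\alpha$ annihilates every odd-degree monomial and acts as $2$ on even-degree ones; combined with $\beta$ acting by the permutation $(1~3)(2~4)$, the element $f$ is supported on even-degree components. The trick, modeled on Lemma \ref{lem.elts} and Proposition \ref{prop.frank}, is to skew-commute $f$ with the generators: since in $V_4$ we have $x_ix_j=-x_jx_i$ for $i\neq j$ and $x_i^2$ central, expressions of the form $x_i f - (\pm) f x_j$ will kill selected group components. Concretely, I expect $x_1 f - f x_3$ and similar combinations to produce elements of $(f)$ supported on only the identity and $\beta$ (killing the $\alpha$ and $\alpha\beta$ components because those send $x_i\mapsto -x_{\sigma(i)}$, introducing a sign mismatch), and then a second skew-commutation with, say, $x_1$ or $x_3$ kills the $\beta$-component too, leaving an element of $(f)\cap A$ built from $x_1,x_3$ (and products of squares). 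Iterating this over the pairs $\{1,3\}$ and $\{2,4\}$ should put enough monomials involving $x_1^2,x_2^2,x_3^2,x_4^2$ and cross-terms $(x_i-x_j)(\cdots)$ into $(f)\cap A$; a final argument analogous to the last paragraph of Proposition \ref{prop.rob} (using the relation $r=x_1^2+x_2^2+x_3^2+x_4^2\in(f)\cap A$, squaring it, and reducing modulo the other relations) should force each $x_i^{4}$ (or even $x_i^2$) into the ideal, yielding finite codimension.

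\textbf{Main obstacle.} The delicate point is the bookkeeping of signs: because $\alpha$ flips signs and the $x_i$ anticommute, each skew-commutation $x_\ell p - \epsilon\, p\, x_m$ multiplies the coefficient in the $\sigma$-component by $x_\ell - \epsilon\,\sigma(x_m)$, and one must choose $\ell$, $m$, $\epsilon$ at each stage so that this factor vanishes on the unwanted components ($\alpha$, $\alpha\beta$, then $\beta$) without accidentally annihilating the identity component — which is easy to do by mistake since $x_\ell^2-x_\ell^2=0$. I would organize this by tracking, for each of the four group elements, the polynomial coefficient as a function on the (commutative) subring $T=\kk[x_1^2,x_2^2,x_3^2,x_4^2]\subseteq C(V_4)$ together with its linear part, essentially repeating the pattern of Lemma \ref{lem.spvdm} and Lemma \ref{lem.sprel} in this mixed weighted/permutation setting. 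Assuming the sign analysis goes through cleanly, the GK-dimension computation itself is immediate (codimension-$0$ quotient), so the bulk of the work — and the only real risk — is in that sign bookkeeping.
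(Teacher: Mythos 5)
Your overall framework is the paper's: show $\dim_\kk V_4/((f)\cap V_4)<\infty$ by skew-commuting $f$ against well-chosen elements, conclude $\p(V_4,K)=4$, then use Theorem \ref{thm.bhz} and \cite[Theorem 3.10]{MU}, with Theorem \ref{thm.subgrp} handling subgroups. But the heart of the proof --- actually exhibiting enough elements of $(f)\cap V_4$ --- is exactly the part you defer (``assuming the sign analysis goes through cleanly''), and the one explicit computation you do offer has the components backwards. For $x_1f - fx_3$ the coefficient at $\sigma$ is $x_1-\sigma(x_3)$, which vanishes only for $\sigma=\beta$ (since $\beta(x_3)=x_1$), while the $\alpha$ and $\alpha\beta$ components survive as $x_1+x_3$ and $2x_1$; so this element is \emph{not} supported only on $e$ and $\beta$ as you claim. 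To kill both $\alpha$-components simultaneously one must exploit that $\alpha$ acts by $-1$ in odd degree and use an anticommutator with a $\beta$-symmetric odd element, e.g.\ $(x_1+x_3)f+f(x_1+x_3)=2(x_1+x_3)\#e+2(x_1+x_3)\#\beta$, which is precisely how the paper starts; it then produces $x_2x_3-x_4x_1$, $x_2x_1-x_4x_3$, the cubics $x_1^2x_3,x_1x_3^2,x_2^2x_4,x_2x_4^2$, and finally $x_i^3$ for all $i$, which is what forces finite codimension. Your proposed endgame via $r=x_1^2+x_2^2+x_3^2+x_4^2\in(f)\cap V_4$ is imported by analogy from Proposition \ref{prop.rob} (a different group, with no sign automorphism) and is not verified for $K$; as it stands the finite-codimension claim is unsupported.

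On the subgroups: your case-by-case treatment is workable (though citing Proposition \ref{prop.rob} for $H=\grp{\beta}$ is a misattribution --- the reindexed Proposition \ref{prop.frank}, as in your parenthetical, is the right reference, and your $\grp{\alpha}$ and $\grp{\alpha\beta}$ computations are easy), but it is unnecessary: once $\p(V_4,K)=4$ is established, Theorem \ref{thm.subgrp} gives $\p(V_4,H)=4$ for every $H\leq K$, so $\GKdim (V_4\#H)/(f_H)=0$ and this finitely generated algebra is finite-dimensional, which yields the graded isolated singularity statement for all subgroups at once --- this is the paper's (one-line) argument. So the subsidiary claims are fine, but the main computational content of the theorem is missing from your proposal.
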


\begin{proof}
Let $f = 1\#e + 1\#\alpha + 1\#\beta + 1\#\alpha\beta$ and set
\begin{align*}
p_1 &= (x_1+x_3)f + f(x_1+x_3) = 2(x_1+x_3)\#e + 2(x_1+x_3)\#\beta \\
p_2 &= x_1f-fx_3 = 2(x_1-x_3)\#e + 2(x_1-x_3)\#\alpha + 2(x_1-x_3)\#\alpha\beta.
\end{align*}
Then 
\begin{align*}
	x_2p_1 + p_1x_4 &= 2(x_2-x_4)(x_1+x_3) \# e \quad\text{and} \\
	(x_2+x_4)p_2-p_2(x_2+x_4) &= 2(x_2+x_4)(x_1-x_3) \#e.
\end{align*}
Combining these gives $x_2x_3-x_4x_1, x_2x_1-x_4x_3 \in (f) \cap V_4$.

Next we produce degree three elements in $(f) \cap V_4$. Set
\[ p_3 = (x_1x_3)f  + f(x_1x_3) = 2x_1x_3\# e + (2x_1x_3)\#\alpha.\]
Thus, $x_1p_3 - p_3x_1 = 4x_1^2x_3\#e \in (f) \cap V_4$.
Conjugation and symmetry now give $x_1x_3^2, x_2^2x_4,x_2x_4^2 \in (f) \cap V_4$.
Finally we set
\[ p_4 = x_1^2f  + fx_3^2 = (x_1^2-x_3^2)\# e + (x_1^2-x_3^2)\#\alpha.\]
Then $x_1p_4 + p_4x_1 = 2x_1(x_1^2-x_3^2)\#e$ and
it follows that $x_1^3 \in (f) \cap V_4$.
Similarly, $x_2^3,x_3^3,x_4^3 \in (f) \cap V_4$ hence 
$V_4/((f) \cap V_4)$ is finite-dimensional.
The remaining claim now follows from Theorem \ref{thm.subgrp}.
\end{proof}


\subsection*{Acknowledgments}  We thank the referee for many helpful suggestions.
Many computations for this paper were performed with the computer algebra system
Macaulay2 \cite{M2}. The authors would also like to thank Andrew Conner
and James Zhang for many stimulating conversations related to this project.
E. Kirkman was partially supported by grant \#208314 from the Simons Foundation.


\providecommand{\bysame}{\leavevmode\hbox to3em{\hrulefill}\thinspace}
\providecommand{\MR}{\relax\ifhmode\unskip\space\fi MR }
\providecommand{\MRhref}[2]{%
  \href{http://www.ams.org/mathscinet-getitem?mr=#1}{#2}
}
\providecommand{\href}[2]{#2}

\end{document}